\newif\ifsiam
\newif\ifthesis
\newif\ifarxiv
\def\ignore#1{}
    \let\process=\use
    \let\process=\ignore
    \newcommand\imagedir{./images}
    \newcommand\imagedir{./images/arXiv/}
\newif\ifcompileimages
\newcommand\tikzdir{./tikz}
\newenvironment{code}{\captionsetup{type=listing}}{}
\setlist[enumerate]{leftmargin=.5in}
\setlist[itemize]{leftmargin=.5in}
\crefname{remark}{Remark}{Remarks}
\crefname{hypothesis}{Hypothesis}{Hypotheses}
\newcommand{\TheTitle}{Bifurcation analysis of Bogdanov-Takens bifurcations in delay differential equations} 
\newcommand{\TheAuthors}{M.M. Bosschaert and Yu.A. Kuznetsov}
\headers{\TheTitle}{\TheAuthors}
\title{{\TheTitle}\thanks{Submitted to the editors DATE.}}
\author{ 
M.M. Bosschaert\thanks{Department of Mathematics, Hasselt University, 
	Diepenbeek Campus, Agoralaan Gebouw D, 3590 Diepenbeek, Belgium
	(\email{maikel.bosschaert@uhasselt.be}).}
  \and
  Yu.A. Kuznetsov\thanks{Department of Mathematics, Utrecht University, 
	Budapestlaan 6, 3508 TA Utrecht, The Netherlands and 
	Department of Applied Mathematics, University of Twente, Zilverling Building, 
	7500AE Enschede, The Netherlands (\email{I.A.Kouznetsov@uu.nl}).}
}
\definecolor{var}{rgb}{0,0.25,0.25}
\definecolor{comment}{rgb}{0,0.5,0}
\definecolor{kw}{rgb}{0,0,0.5}
\definecolor{str}{rgb}{0.5,0,0}
\definecolor{darkblue}{cmyk}{1,0,0,0.8}
\definecolor{darkred}{cmyk}{0,1,0,0.7}
\definecolor{orange}{cmyk}{0,0.5,1,0}
\definecolor{royalblue}{rgb}{0.00000,0.44700,0.74100}
\definecolor{royalorange}{rgb}{0.85000,0.32500,0.09800}%
\definecolor{royalyellow}{rgb}{0.92900,0.69400,0.12500}%
\definecolor{purple}{rgb}{0.5804, 0.0, 0.82745098}%
\definecolor{applegreen}{rgb}{0.55, 0.71, 0.0}
\definecolor{bittersweet}{rgb}{1.0, 0.44, 0.37}
\pgfplotsset{compat=newest, ylabsh/.style={every axis y label/.style={at={(0,0.5)}, xshift=#1, rotate=90}}} 
\tikzset{external/system call={lualatex -shell-escape -halt-on-error -interaction=batchmode -jobname "\image" "\texsource"}}
\newcommand{%
    \ifcompileimages%
        \tikzsetnextfilename{}%
        \input{tikz/}%
    \else
        \includegraphics{\imagedir/}
    \fi
}[1]{%
    \ifcompileimages%
        \tikzsetnextfilename{#1}%
        \input{tikz/#1}%
    \else
        \includegraphics{\imagedir/#1}
    \fi
}
\newcommand{\includetikzscaled}[2][1.0]{%
    \tikzsetnextfilename{#2}%
    \ifcompileimages%
        \tikzsetnextfilename{#2}%
        \includegraphics[width=#1\linewidth]{\tikzdir/#2.tikz}
    \else
        \includegraphics{\imagedir/#2}
    \fi
}
\definecolor{matlabYellow}{HTML}{FCFCDC}
\definecolor{matlabrulecolor}{HTML}{A8A8A8}
\definecolor{matlabGreen}{HTML}{379634}
\definecolor{matlabLilas}{RGB}{170,55,241}
\DeclareMathOperator{\atanh}{atanh}
\crefname{section}{Section}{Sections}
\crefname{subsection}{Section}{Sections}
\DeclareMathAlphabet\mathbfcal{OMS}{cmsy}{b}{n}
\newcommand{\rss}{\SUNSTAR{r}}
\newcommand{\PAIR}[3][]{\ensuremath{\langle #2,#3 \rangle_{#1}}}
\newcommand{\STAR}[1]{\ensuremath{#1^{\star}}}
\newcommand{\SUN}[1]{\ensuremath{#1^{\odot}}}
\newcommand{\SUNSTAR}[1]{\ensuremath{#1^{\odot\star}}}
\newcommand{\SUNSUN}[1]{\ensuremath{#1^{\odot\odot}}}
\newcommand{\WSTAR}{\ensuremath{\text{weak}^{\star}}}
\DeclareMathOperator{\LIP}{Lip}
\newcommand{\DEF}{\ensuremath{\coloneqq}}
\newcommand{\RR}{\ensuremath{\mathbb{R}}}
\newcommand{\RRR}[1]{\ensuremath{\RR^{#1\star}}} 
\newcommand{\CC}{\ensuremath{\mathbb{C}}}
\newcommand{\DET}{\operatorname{det}}
\newcommand{\DOM}{\ensuremath{\mathcal{D}}}
\newcommand{\INV}[1]{\ensuremath{#1^{\mathrm{INV}}}}
\newcommand{\BINV}[1]{\ensuremath{B^{\mathrm{INV}}_{#1}}}
\newcommand{\CM}{\ensuremath{\mathcal{W}^c_{\text{loc}}}}
\newcommand{\MATLAB}{\texttt{MATLAB}\xspace}
\newcommand{\OCTAVE}{\texttt{GNU Octave}\xspace}
\newcommand{\DDEBIFTOOL}{\texttt{DDE-BifTool}\xspace}
\newcommand*{\addFileDependency}[1]{
  \typeout{(#1)}
  \@addtofilelist{#1}
  \IfFileExists{#1}{}{\typeout{No file #1.}}
}
\newcommand{\paper}{paper}
\begin{document}

\ifarxiv \pdfbookmark[0]{Main Text}{maintext} \fi

\maketitle

\begin{abstract}
In this \paper{}, we will perform the parameter-dependent center manifold reduction
near the generic and transcritical codimension two Bogdanov--Takens bifurcation
in classical delay differential equations (DDEs). Using a generalization of the
Lindstedt-Poincar\'e method to approximate the homoclinic solution allows us to
initialize the continuation of the homoclinic bifurcation curves emanating from
these points. The normal form transformation is derived in the functional
analytic perturbation framework for dual semigroups (sun-star calculus) using a
normalization technique based on the Fredholm alternative. The obtained
expressions give explicit formulas, which have been implemented in the freely
available bifurcation software package \DDEBIFTOOL. The effectiveness is
demonstrated on various models\ifthesis\footnote{Submitted as}\fi.

\end{abstract}

\begin{keywords}
generic Bogdanov--Takens bifurcation, transcritical Bogdanov--Takens
bifurcation, homoclinic solutions, delay differential equations, sun-star
calculus, strongly continuous semi-groups, Center Manifold Theorem, DDE-BifTool
\end{keywords}

\begin{AMS}
37G05, 37G10, 65P30, 34K16, 34K18, 34K19
\end{AMS}

\section{Introduction} 
The {\it Bogdanov--Takens bifurcation} caused by the presence of an equilibrium with a double zero eigenvalue is a well-studied singularity in dynamical systems.  It implies existence of saddle homoclinic orbits nearby, which is a global phenomenon. In particular, the codimension two Bogdanov--Takens bifurcation in finite-dimensional ordinary differential equations (ODEs) has been studied theoretically and applied in numerous reserach publications. The same is true for the infinite-dimensional dynamical systems generated by delay differential equations (DDEs). In the simplest case, often encountered in applications, such DDEs have the form
\begin{equation}
    \label{btdde:eq:discreteDDEs} 
    \dot{x}(t) = f(x(t),x(t-\tau_1),\ldots,x(t-\tau_m),\alpha),
    \qquad t \geq 0,
\end{equation}
where $x(t) \in \RR^n,\ \alpha \in \RR^p$, while $0 < \tau_1 < \tau_2 < \cdots <\tau_m$ are constant delays, and $f : \RR^{n \times (m + 1)} \times \RR^p \to \RR^n$ is a smooth mapping. These are known as \emph{discrete} DDEs.

Up to date, the standard available parameter-dependent center manifold theorem for DDEs in \cite{diekmann1995delay} assumed that the equilibrium persists for all nearby parameter values. This was a serious limitation, since in generic
unfoldings of the codimension two Bogdanov--Takens singularity it is not the case.

However, recently, in \cite{Switching2019}, this obstruction has been removed and the existence of finite-dimensional smooth parameter-dependent local center manifolds has been rigorously established in the functional analytic perturbation framework for dual semigroups (sun-star calculus) developed in \cite{Clement1987, Clement1988, Clement1989, Clement1989b}. Once the existence of these invariant manifolds is proved, the normalization technique for local bifurcations of ODEs developed in \cite{Kuznetsov1999} can be lifted rather easily to the infinite dimensional setting of DDEs. The advantages of this normalization technique are that the center manifold reduction and the calculation of the normal form coefficients are performed simultaneously by solving the so-called \emph{homological equation}. This method gives explicit expressions for the coefficients rather than a procedure as developed in \cite{Faria1995201, Faria1995}. The explicit expressions make them particularly suitable for both symbolic and numerical evaluation.

Indeed, utilizing the normalization method, the authors in \cite{Switching2019} obtained asymptotics to initialize the continuation of codimension one bifurcation curves of nonhyperbolic equilibria and cycles emanating from the codimension two \emph{generalized Hopf, fold-Hopf, Hopf-Hopf} and \emph{transcritical-Hopf} bifurcation points in DDEs of the form \cref{btdde:eq:discreteDDEs}. These asymptotics have been implemented into the fully \OCTAVE compatible \MATLAB package \DDEBIFTOOL \cite{DDEBIFTOOL,2014arXiv1406.7144S}.

Another recent developent is the rigorous derivation of higher-order asymptotics for the codimension one homoclinic bifurcation curve emanating from the generic codimension two Bogdanov--Takens bifurcation point in in two-parameter ODEs \cite{Bosschaert@Interplay}. Thus, by combining the results of the parameter-dependent center manifolds in DDEs from \cite{Switching2019} and the homoclinic asymptotics in ODEs from \cite{Bosschaert@Interplay}, we are in the position to perform the parameter-dependent center manifold reduction and normalization for the \emph{generic} and \emph{transcritical codimension two Bogdanov--Takens} bifurcations. This will allow us to initialize the continuation of codimension one bifurcation curves of nonhyperbolic equilibria and homoclinic solutions emanating from these codimension two points. Hopefully, our results and their software implementation will make the numerical analysis of 
Bogdanov--Takens bifurcations in DDEs from applications rather routine.

This \paper{} is organized as follows. We begin in \cref{btdde:sec:sunstar} with a short summary from \cite{Switching2019} on parameter-dependent center manifolds for classical DDEs and we state various results needed for the normalization technique.
In \cref{btdde:sec:Center_manifold_reduction} we describe the general technique that we use to derive the transformation from the orbital normal form on the parameter-dependent center manifold in the infinite-dimensional setting of DDEs.
In \cref{btdde:sec:parameter-dependent-center-manifold-reduction} the method is then applied to the generic and transcritical codimension two Bogdanov--Takens bifurcations. We provide explicit transformations necessary for the predictors of codimension one bifurcation curves. We do this in a form suitable for classical DDEs, covering cases that are more general than \cref{btdde:eq:discreteDDEs}. It is here where we see the true benefit of allowing for orbital normal forms on the center manifold. Indeed, we do not need to derive homoclinic asymptotics for the transcritical codimension two Bogdanov--Takens bifurcation separately. Instead, we only need to derive the center manifold transformation for the transcritical codimension two Bogdanov--Takens bifurcation. Then, using the blow-up transformations \cref{btdde:eq:blowup}, we obtain the same perturbed Hamiltonian system (up to order three) as in the generic Bogdanov--Takens bifurcation.

We employ our implementation in \DDEBIFTOOL to illustrate the accuracy of the codimension one bifurcation curve predictors through various example models, displaying the generic and transcritical codimension two Bogdanov--Takens bifurcations in \cref{btdde:sec:Examples}.  An in-depth treatment of the examples, including the \MATLAB and Julia source code to reproduce the obtained results, as well as a more in-depth analysis of the examples, are provided in the 
\ifthesis%
    \cref{chapter:BT_DDE_supplement}%
\fi%
\ifsiam%
    \hyperref[mysupplement]{online Supplement}%
\fi%
\ifarxiv%
    \hyperlink{mysupplement}{Supplement}%
\fi. \vskip 1ex

\section{Parameter-dependent center manifolds for DDEs}
\label{btdde:sec:sunstar}

Here we summarize those results from \cite{Switching2019} on parameter-dependent center manifold for classical DDEs, which are required for the normalization technique in \cref{btdde:sec:parameter-dependent-center-manifold-reduction}. For a general introduction on perturbation theory for dual semigroups (also known as sun-star calculus) we refer to \cite{diekmann1995delay}.

Consider the classical parameter-dependent DDE
\begin{equation}
    \tag{DDE}
    \label{btdde:eq:classicalDDE}
    \dot{x}(t)= F(x_t, \alpha), \qquad t \ge 0,
\end{equation}
where $F: X \times \RR^p \to \RR^n$ is $C^k$-smooth for some $k \ge 1$ with
$F(0,0) = 0$ and $X \DEF C([-h,0],\RR^n)$. Here for each $t \ge 0$, the
\emph{history function} $x_t : [-h,0] \to \RR^n$ defined by
\[
  x_t(\theta) \DEF x(t + \theta), \qquad \forall\,\theta \in [-h,0].
\]

It is convenient to split the right hand-side into its linear and nonlinear parts and write
\begin{equation}
  \label{btdde:eq:classicalDDE12}
      F(\phi,\alpha)= \PAIR{\zeta}{\phi} + D_2F(0,0)\alpha + G(x_t, \alpha).
\end{equation}
Here $\zeta : [0,h] \to \RR^{n \times n}$ is a matrix-valued function of bounded variation, normalized by the requirement that $\zeta(0) = 0$ and is right-continuous on the open interval $(0,h)$, and $G : X \to \RR^n$ is a $C^k$-smooth nonlinear operator with $G(0,0) = G_1(0,0) = G_2(0,0) = 0$. The pairing is defined by
\begin{equation}
  \label{btdde:eq:lindde_shorthand}
  \PAIR{\zeta}{\phi} \DEF \int_0^h{d\zeta(\theta)\phi(-\theta}),
\end{equation}
where the integral is of the Riemann--Stieltjes type.

Let $T$ be the $\mathcal{C}_0$-semigroup on $X$ corresponding to the linearization of \cref{btdde:eq:classicalDDE} at $0 \in X$ for the critical parameter value $\alpha = 0$. Suppose that the generator 
\[
    \DOM{(A)}  = \{ \phi \mid \dot \phi \in X, \dot \phi(0) = \PAIR{\zeta}{\phi}\}, 
    \qquad 
    A\phi = \dot \phi,
\]
of $T$ has $1 \le n_0 < \infty$ purely imaginary eigenvalues with corresponding $n_0$-dimensional real center eigenspace $X_0$. Then by \cite[Corollary 20]{Switching2019} there exists a $C^k$-smooth map $\mathcal{C} : U \times U_p \to X$ defined in a neighborhood of the origin in $X_0 \times \RR^p$ and such that for every sufficiently small $\alpha \in \RR^p$ the manifold $\CM(\alpha) \DEF \mathcal{C}(U,\alpha)$ is locally positively invariant for the semiflow generated by \cref{btdde:eq:classicalDDE} at parameter value $\alpha$.

Since $X$ is not reflexive, i.e. does not isomorphic to its dual space $\STAR{X}$, the adjoint semigroup $\STAR{T}$ is  only $\WSTAR$ continuous on $\STAR{X}$ and $\STAR{A}$ generates $\STAR{T}$ only in the $\WSTAR$ sense. The maximal subspace of strong continuity
\[
\SUN{X} \DEF \left\{\STAR{x} \in \STAR{X} \,:\, 
    t \mapsto \STAR{T}(t)\STAR{x} \text{ is norm-continuous on } \RR_+\right\}
\]
is invariant under $\STAR{T}$, and we have the representation
\begin{equation}
  \label{btdde:eq:xsun_dde}
  \SUN{X} = \RRR{n} \times L^1([0,h],\RRR{n}).
\end{equation}
The duality pairing between $\SUN{\phi} = (c,g) \in \SUN{X}$ and $\phi \in X$ is
\begin{equation}
  \label{btdde:eq:pairing_X_sun_X}
  \PAIR{\SUN{\phi}}{\phi} = c\phi(0)+ \int_{0}^{h}g(\theta)\,\phi(-\theta)\,d\theta.
\end{equation}

At this stage, we again have a $\mathcal{C}_0$-semigroup $\SUN{T}$ with generator $\SUN{A}$ on a Banach space $\SUN{X}$ so we can iterate the above construction once more. On the dual space 
  \[
    \SUNSTAR{X} = \RR^n \times L^\infty([-h,0], \RR^n),
  \]
we obtain the adjoint semigroup $\SUNSTAR{T}$ with the $\WSTAR$ generator  
\begin{equation}
    \label{btdde:eq:A_sunstar}
    \DOM{(\SUNSTAR A)}  = \{ (\alpha,\phi) \in \SUNSTAR{X} \mid \phi \in \LIP(\alpha) \}, 
    \qquad 
    \SUNSTAR A(\alpha,\phi) = ( \PAIR{\zeta}{\phi}, \dot \phi ).
\end{equation}
The duality pairing between $\SUNSTAR{\phi} = (a,\psi) \in \SUNSTAR{X}$ and
$\SUN{\phi} = (c,g) \in \SUN{X}$ is
\begin{equation}
  \label{btdde:eq:pairing_X_sun_star_X_sun}
  \PAIR{\SUNSTAR{\phi}}{\SUN{\phi}} = 
    ca + \int_{0}^{h}g(\theta)\,\psi(-\theta)\,d\theta.
\end{equation}

By restriction to the maximal subspace of strong continuity $\SUNSUN{X} =
\overline{\DOM(\SUNSTAR{A})}$, we end up with the $\mathcal{C}$-semigroup
$\SUNSUN{T}$. Its generator $\SUNSUN{A}$ is the part of $\SUNSTAR{A}$ in
$\SUNSUN{X}$.
The canonical injection $j : X \to \SUNSTAR{X}$ is given by
\begin{equation}
    \label{btdde:eq:j}
    j(\phi) = \left(\phi(0), \phi \right),
\end{equation}
mapping $X$ \emph{onto} $\SUNSUN{X}$. Therefore, $X$ is sun-reflexive with
respect to the shift semigroup $T$.

We are now in the position to state the second part of \cite[Corollary 20]{Switching2019}. That is, if the history $x_t$ associated with a solution of \cref{btdde:eq:classicalDDE} exists on some nondegenerate interval $I$ and $x_t \in \CM(\alpha)$ for all $t \in I$, then $u : I \to X$ defined by $u(t) \DEF x_t$ is differentiable and satisfies
\[
  j\dot{u}(t) = \SUNSTAR{A}ju(t) + (D_2F(0,0)\alpha)\rss 
                    + G(u(t),\alpha)\rss, \qquad \forall\,t \in I.
\]
Here, for $i = 1,\ldots,n$, we denote $\rss_i \DEF (e_i, 0)$, where $e_i$ is the
$i$th standard basis vector of $\RR^n$ and
\[
  w \rss \DEF \sum_{i=1}^n{w_i\rss_i}, \qquad 
    \forall\,w = (w_1,\ldots,w_n) \in \RR^n.
\]

\subsection{Spectral computations for classical DDEs in case of multiple eigenvalues}
It is well known that for classical DDEs all spectral information about the
generator $A$ is contained in a holomorphic \emph{characteristic matrix
function} $\Delta : \CC
\to \CC^{n \times n}$ defined by
\begin{equation}
\label{btdde:eq:CharMatrix}
  \Delta(z) \DEF zI - \hat{\zeta}(z) 
  \qquad \text{with} 
  \qquad \hat{\zeta}(z) \DEF \int_0^h{e^{-z\theta}\,d\zeta(\theta)},
\end{equation}
where $\zeta$ is the real kernel from \cref{btdde:eq:lindde_shorthand}, see
\cite[Sections IV.4 and IV.5]{diekmann1995delay}. In particular, the
eigenvalues of $A$ are the roots of the \emph{characteristic equation}
\begin{equation}
  \label{btdde:eq:main:det_delta}
\DET{\Delta(z)} = 0,
\end{equation}
and the algebraic multiplicity of an eigenvalue equals its order as a root of
\cref{btdde:eq:main:det_delta}.

For the normalization technique in \cref{btdde:sec:parameter-dependent-center-manifold-reduction},
we will need normalized representations for the (generalized) eigenfunctions and adjoint
(generalized) eigenfunctions of the generator $A$ and $A^\star$, respectively.
In this section, we will consider the more general case where $\lambda$ is an eigenvalue of
algebraic multiplicity $k \in \mathbb N$ and geometric multiplicity 1.  
Although in \cref{btdde:sec:parameter-dependent-center-manifold-reduction} we will
only need the special case where $\lambda=0$ is a double eigenvalue of $A$, the
expressions are useful when considering for example the 1:1 resonant Hopf
bifurcation and the triple zero bifurcation.

\begin{proposition}
\label{btdde:proposition:eigenvalues_multiplicity_k}
Let $\lambda$ be an eigenvalue of the generator $A$ with algebraic multiplicity
$k\in\mathbb N$ and geometric multiplicity one, then there are $($generalized\,$)$
eigenfunctions $\phi_i$ such that
\begin{equation}
\label{btdde:eq:eigenspaces_eigenfunctions}
A\phi_0 = \lambda\phi_0,\qquad A\phi_i = \lambda\phi_i + \phi_{i-1}, \qquad i \in \{1,\dots,k-1\},
\end{equation}
and adjoint $($generalized\,$)$ eigenfunctions $\psi_i$ such
that
\begin{equation}
\label{btdde:eq:eigenspaces_ad_eigenfunctions}
A^{\star}\psi_{k-1} = \lambda\psi_{k-1}, \qquad 
    A^{\star}\psi_{k-i} = \lambda\psi_{k-i} + \psi_{k-i + 1}, \qquad i \in \{2,\dots,k\}.
\end{equation}
Let the ordered set $(q_0,\dots,q_{k-1})$ of vectors be a Jordan chain for $\Delta(\lambda)$, i.e.
$q_0 \neq 0$ and 
\[
    \Delta(z)[q_0 + (z-\lambda) q_1 + \cdots + (z-\lambda)^k q_{k-1}] = \mathcal O((z-\lambda)^k).
\]
Similarly, let $(p_{k-1},\dots,p_0)$ be a Jordan chain for $\Delta^T(\lambda)$.
Then the $($generalized\,$)$) eigenfunctions and adjoint $($generalized\,$)$ eigenfunctions are given by 
\begin{equation}
\label{btdde:eq:eigenfunction_and_adjoint_eigenfunctions}
\begin{aligned}
\phi_i \colon [-h,0] \rightarrow \mathbb R^n &\colon \theta \mapsto e^{\lambda\theta} \sum_{l=0}^i q_{i-l} \frac{\theta^l}{l!}, \\
\psi_i \colon [0,h]  \rightarrow \mathbb R^n &\colon \theta \mapsto p_i + \sum_{l=0}^{k-1-i} p_{i + l} \int_0^\theta \int_{\sigma}^h 
    e^{\lambda(\sigma-s)}\frac{(\sigma-s)^{l}}{l!} d\zeta(s) d\sigma,
\end{aligned}
\end{equation}
for $i \in \{0,\dots, k-1\}$, respectively.
Furthermore, the following identities hold
\begin{equation}
\label{btdde:eq:eigenfunction_identities}
\begin{aligned}
\left<\psi_i,\phi_j\right> & = \left<\psi_{i + 1},\phi_{j + 1}\right>, & i,j\in\{0,\dots,k-2\},\\
\left<\psi_{k-1},\phi_{k-1}\right> & = p_{k-1} \sum_{l=0}^{k-1} \frac{\Delta^{(l + 1)}(\lambda)}{(l + 1)!}q_{k-1-l}, \\
\left<\psi_i,\phi_j\right> & = 0, & i>j, \\
\left<\psi_0,\phi_j\right> &= \sum_{l=0}^{k-1} \sum_{m=0}^j p_l \frac{\Delta^{(l + m+1)}(\lambda)}{(l + m+1)!} q_{j-m}, 
                                 & j>0,
\end{aligned}
\end{equation}
which can be normalized to satisfy 
\begin{equation}
    \label{btdde:eq:normalization_identity}
    \langle\psi_i,\phi_j\rangle = \delta_{ij}.
\end{equation}
\end{proposition}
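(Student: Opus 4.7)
The plan is to verify the explicit formulas for $\phi_i$ and $\psi_i$ by direct computation against the characterizations of $A$ and $A^\star$, then to establish the pairing identities first by an abstract shift argument and afterwards by explicit integration, and finally to conclude the normalization by a triangular rescaling of the Jordan chains.

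For $\phi_i$, I would differentiate the ansatz $\phi_i(\theta)=e^{\lambda\theta}\sum_{l=0}^i q_{i-l}\theta^l/l!$ and reindex the sum to obtain $\dot\phi_i = \lambda\phi_i + \phi_{i-1}$ pointwise on $[-h,0]$, so that $A\phi_i = \lambda\phi_i + \phi_{i-1}$ once $\phi_i \in \DOM(A)$ is established. The only nontrivial part of the latter is the boundary condition $\dot\phi_i(0) = \PAIR{\zeta}{\phi_i}$. Evaluating the right-hand side via \cref{btdde:eq:lindde_shorthand} yields $\sum_{l=0}^i \hat\zeta^{(l)}(\lambda) q_{i-l}/l!$. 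Using $\hat\zeta(z)=zI-\Delta(z)$ from \cref{btdde:eq:CharMatrix} to convert this into a sum in $\Delta^{(l)}(\lambda)q_{i-l}$, and invoking the Jordan chain relations $\sum_{l=0}^i \Delta^{(l)}(\lambda) q_{i-l}/l! = 0$ obtained by matching coefficients of $(z-\lambda)^i$ in $\Delta(z)[q_0 + (z-\lambda)q_1+\cdots+(z-\lambda)^{k-1}q_{k-1}] = \mathcal O((z-\lambda)^k)$, leaves precisely $\lambda q_i + q_{i-1} = \dot\phi_i(0)$. An entirely analogous dual computation verifies the formula for $\psi_i$, using the explicit description of $A^\star$ for classical DDEs together with the Jordan chain for $\Delta^T(\lambda)$; the extra integral kernel in the ansatz for $\psi_i$ is exactly what is needed to absorb the boundary contribution produced by the adjoint characteristic relations for the $p$-chain.

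The shift identity $\PAIR{\psi_i}{\phi_j} = \PAIR{\psi_{i+1}}{\phi_{j+1}}$ then follows abstractly from $\PAIR{A^\star\psi_{i+1}}{\phi_{j+1}} = \PAIR{\psi_{i+1}}{A\phi_{j+1}}$ after substituting \cref{btdde:eq:eigenspaces_eigenfunctions,btdde:eq:eigenspaces_ad_eigenfunctions} and cancelling the symmetric $\lambda$-terms. Iterating this shift to push the index of $\phi$ below zero, where $\phi_{-1}\equiv 0$ by convention, yields $\PAIR{\psi_i}{\phi_j}=0$ for $i>j$. The closed-form expressions for $\PAIR{\psi_{k-1}}{\phi_{k-1}}$ and $\PAIR{\psi_0}{\phi_j}$ are then obtained by direct substitution of the explicit formulas, exchanging the order of integration in the double integral defining $\psi_i$, and repeatedly recognising $\int_0^h(-\theta)^m e^{-\lambda\theta}d\zeta(\theta)/m!$ as $\hat\zeta^{(m)}(\lambda)/m!$ and hence, up to the known contribution from the $zI$ term, as $-\Delta^{(m)}(\lambda)/m!$. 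This is the main obstacle: the bookkeeping needed to isolate the emerging Taylor coefficients of $\Delta$ in the correct order, and to ensure that all boundary terms from the integration by parts cancel correctly, is the step most likely to require careful execution.

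For the normalization \cref{btdde:eq:normalization_identity}, the Jordan chain $(q_0,\ldots,q_{k-1})$ for $\Delta(\lambda)$ is unique only up to an upper-triangular Toeplitz transformation $q_i \mapsto \sum_{l=0}^i c_l q_{i-l}$ with $c_0\neq 0$, and analogously for the $p$-chain. Combined with the shift identity, which has already reduced all pairings to the values $\PAIR{\psi_0}{\phi_s}$ for $s=0,\ldots,k-1$, this freedom suffices to enforce $\PAIR{\psi_{k-1}}{\phi_{k-1}}=1$ and $\PAIR{\psi_0}{\phi_s}=0$ for $s=1,\ldots,k-1$; the resulting triangular linear system for the rescaling constants is solvable once the leading pairing is nonzero, which is standard for a Jordan chain of maximal length $k$ at an eigenvalue of geometric multiplicity one.
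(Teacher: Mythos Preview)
Your proposal is correct and follows essentially the same route as the paper: the shift identity via the adjoint relation, the vanishing for $i>j$ by pushing indices down, the closed-form pairings by Fubini and recognition of $\Delta^{(m)}(\lambda)$, and the normalization by exploiting the freedom in the Jordan chains. Two minor differences: the paper simply cites \cite[Theorems IV.5.5 and IV.5.9]{diekmann1995delay} for the explicit formulas \cref{btdde:eq:eigenfunction_and_adjoint_eigenfunctions} rather than verifying them directly as you propose, and for the nonvanishing of the leading pairing (which you call ``standard'') the paper gives the concrete argument via the spectral decomposition $X = \mathcal{N}((\lambda-A)^k)\oplus{}^{\perp}\mathcal{N}((\lambda-A^\star)^k)$, so that $\phi_0$ cannot be annihilated by all of the $\psi_i$.
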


\begin{proof}
The (generalized) eigenspace at an eigenvalue $\lambda$ of $A$ of algebraic multiplicity $k$ and geometric multiplicity 1 is given by 
\[
\mathcal{N}((A-\lambda)^k),
\]
which leads to the expressions in \cref{btdde:eq:eigenspaces_eigenfunctions} and similarly for \cref{btdde:eq:eigenspaces_ad_eigenfunctions}. The representations of the (generalized) eigenfunctions and adjoint (generalized) eigenfunctions can be found in Theorem IV.5.5 and IV.5.9 in \cite{diekmann1995delay}, respectively.

The first identity in \cref{btdde:eq:eigenfunction_identities} follows directly from
\[
    \left<\psi_{i + 1},\phi_{j + 1}\right> = \left<(\lambda-A^\star)\psi_i,\phi_{j + 1}\right>
    = \left<\psi_i,(\lambda-A)\phi_{j + 1}\right>
    = \left<\psi_i,\phi_j\right>,
\]
where $i,j\in\{0,\dots,k-2\}$.
For the second identity in \cref{btdde:eq:eigenfunction_identities} we notice that
\begin{align*}
    \left<\psi_{k-1},\phi_{k-1}\right> 
        &= \int_0^h d\psi_{k-1}(\theta) \phi_{k-1}(-\theta) 
         = p_{k-1}q_0 + \int_0^h \psi'_{k-1}(\theta) \phi_{k-1}(-\theta) d\theta \\
        &=  p_{k-1}q_0 + \int_0^h p_{k-i} \int_{\theta}^h e^{\lambda(\theta-s)} d\zeta(s) 
                e^{-\lambda\theta} \sum_{l=0}^{k-1} q_{i-l} \frac{(-1)^l\theta^l}{l!}d\theta \\
        &=  p_{k-1}q_0 + p_{k-i} \sum_{l=0}^{k-1} \int_0^h \int_{\theta}^h e^{-\lambda s}  
                \frac{(-1)^l\theta^l}{l!} d\zeta(s) d\theta  q_{i-l}\\
        &=  p_{k-1}q_0 + p_{k-i} \sum_{l=0}^{k-1} \int_0^h \int_0^s e^{-\lambda s}  
                \frac{(-1)^l\theta^l}{l!}d\theta  d\zeta(s) q_{i-l}\\
        &=  p_{k-1}q_0 + p_{k-i} \sum_{l=0}^{k-1} \int_0^h  e^{-\lambda s}  
            \frac{(-1)^l s^{l + 1}}{(l + 1)!} d\zeta(s) q_{i-l}
        = p_{k-1} \sum_{l=0}^{k-1} \frac{\Delta^{(l})(\lambda)}{(l + 1)!} q_{i-l},
\end{align*}
where we used Fubini\textquoteright s theorem to change the order of
integration. The last equality holds since
\[
\Delta'(z) =  zI + \int_0^h{\theta e^{-z\theta}\,d\zeta(\theta)} 
\]
and
\[
\Delta^{(n)}(z) =  (-1)^{n + 1}\int_0^h{\theta^n e^{-z\theta}\,d\zeta(\theta)}, \quad n > 1.
\]

Using the first identity in \cref{btdde:eq:eigenfunction_identities} and that for $j>0$ we have
\begin{align*}
    \left< \psi_j,\phi_0\right> = \left< (\lambda-A^\star)\psi_{j-1},\phi_0\right> 
                                = \left< \psi_{j-1},(\lambda-A)\phi_0\right> = 0,
\end{align*}
the third identity in \cref{btdde:eq:eigenfunction_identities} follows.

For the last identity in \cref{btdde:eq:eigenfunction_identities}, we have that
\begin{align*}
    \left< \psi_0, \phi_j \right> 
    &= \int_0^h d\psi_0(\theta) \phi_j(-\theta) 
         = p_0q_j + \int_0^h \psi'_0(\theta) \phi_j(-\theta) d\theta \\
    &= p_0 q_j + \int_0^h \left( \sum_{l=0}^{k-1} p_l 
        \int_\theta^h e^{\lambda(\theta-s)}\frac{(\theta-s)^l}{l!} d\zeta(s)  
        e^{-\lambda\theta} \sum_{m=0}^j q_{j-m} (-1)^m\frac{\theta^m}{m!} \right) d\theta \\
    &= p_0 q_j + \sum_{l=0}^{k-1} \sum_{m=0}^j p_l \int_0^h  
        \int_\theta^h e^{-s\lambda}\frac{(\theta-s)^l}{l!} d\zeta(s)  
        (-1)^m\frac{\theta^m}{m!} d\theta q_{j-m}   \\
    &= p_0 q_j + \sum_{l=0}^{k-1} \sum_{m=0}^j p_l \int_0^h  
        \int_0^s e^{-s\lambda}\frac{(\theta-s)^l}{l!}   
        (-1)^m\frac{\theta^m}{m!} d\theta d\zeta(s) q_{j-m} \\
    &= p_0 q_j + \sum_{l=0}^{k-1} \sum_{m=0}^j p_l \int_0^h  
    e^{-s\lambda} \frac{(-1)^{l + m} s^{l + m+1}}{(l + m+1)!} d\zeta(s) q_{j-m}
    \process{%
        \\ &
    }
    = \sum_{l=0}^{k-1} \sum_{m=0}^j p_l \frac{\Delta^{(l + m+1)}(\lambda)}{(l + m+1)!} q_{j-m}.
\end{align*}
Here we again used Fubini's theorem to reverse the order of integration and the beta function of Euler to integrate the term
\[
\int_0^s (\theta-s)^l\theta^m d\theta.
\]

To prove the normalization condition $\langle\psi_0,\phi_0\rangle = 1$,
we start by showing that $\langle\psi_0,\phi_0\rangle$
is non-vanishing. Consider the direct sum decomposition 
\begin{align*}
X & = \mathcal{N}((\lambda-A)^k)\oplus\overline{\mathcal{R}((\lambda-A)^k)}
    = \mathcal{N}((\lambda-A)^k)\oplus{}^{\bot}\mathcal{N}((\lambda-A^{\star})^k),
\end{align*}
see Theorem IV.2.5 in \cite{diekmann1995delay}. Since $\phi_0\in\mathcal{N}((\lambda-A)^k)$
and $\mathcal{N}((\lambda-A^{\star})^k)$ is spanned by $\{\psi_0, \cdots \psi_k\}$
it follows that $\langle\psi_0,\phi_0\rangle\neq0$.

In order to achieve the normalization in \cref{btdde:eq:normalization_identity}, we
observe that the eigenfunctions $\phi_j\, (j\in\{0,\dots k-1\})$, are invariant
under the transformations
\begin{align}
    \phi_0 &\rightarrow \alpha \phi_0, \\
    \phi_j &\rightarrow \alpha (\phi_j + \delta_j \phi_0), \quad j\in\{1,\dots,n\},
\end{align}
where $\alpha, \delta_j \in \mathbb R$ for $j\in\{0,\dots,k-1\}$ and $\alpha \neq 0$.
Using the first identity in \cref{btdde:eq:eigenfunction_identities}, we see that it is sufficient
to normalize $\left< \psi_0, \phi_0 \right>$ to 1 and $\left<\psi_0, \phi_j \right>$ to 0.
Thus, using the invariance of the eigenfunctions, we obtain the solutions
\begin{align*}
    \alpha = \frac{1}{\left< \psi_0, \phi_0 \right>}, \quad
    \delta_j = -\left< \psi_0, \phi_j \right>, \quad j\in\{1,\dots,n\}.
\end{align*}
We finish with the remark that the invariance of the eigenfunctions is equivalent to the invariance
for the Jordan chains $(q_0,q_1, \dots, q_{k-1}) \rightarrow \alpha (q_0,q_1 + \delta_1 q_0, \dots, q_{k-1} + \delta_{k-1} q_0)$.
\end{proof}

\begin{remark}
The Jordan chains for $(q_0,\dots,q_{k-1})$ and $(p_{k-1},\dots,p_0)$ in
the previous \cref{btdde:proposition:eigenvalues_multiplicity_k} can be computed
as follows. Set for $j\in\mathbb N$
\[
    P_j = P_j(z) = \frac{\Delta^{(j-1)}(z)}{(j-1)!},
\]
and define $A_k = A_k(z)$ to be the $(nk)\times(nk)$-matrix
\[
    A_k = \begin{pmatrix}
    P_1 & 0 & \cdots & 0 \\
    P_1 & P_2 & \cdots & 0 \\
    \cdots &  & \ddots & \vdots \\
    P_k & P_{k-1} & \cdots &  P_1,
    \end{pmatrix}
    \qquad k \in \mathbb N.
\]
Then the Jordan chains for $(q_0,\dots,q_{k-1})$ and  $(p_{k-1},\dots,p_0)$ can
be constructed via 
\[
A_k(\lambda)
\begin{pmatrix}
    q_0 \\
    q_1 \\
    \vdots \\
    q_{k-1}
\end{pmatrix} = 0, \qquad
\begin{pmatrix}
    p_0 & p_1 & \cdots & p_{k-1}
\end{pmatrix}
A_k(\lambda) = 0,
\]
see \cite[Chapter IV Exercise 5.11]{diekmann1995delay}.
\end{remark}

\subsection{Solvability of linear operator equations with double eigenvalues}
\label{btdde:sec:solvability}

When computing the normal form coefficients for the Bogdanov--Takens
bifurcation in \cref{btdde:sec:parameter-dependent-center-manifold-reduction}, using
the normalization technique described in \cref{btdde:sec:Center_manifold_reduction},
we will encounter linear operator equations of the following form
\begin{equation}
  \label{btdde:eq:general_system_sunstar}
  \left( \lambda - \SUNSTAR{A} \right)j v = (w_0,w),
\end{equation}
where $\lambda$ is a double eigenvalue, while no other eigenvalues are present
on the imaginary axis, $(w_0,w) \in \SUNSTAR{X}$ is given and $v \in D(A)$ is
the unknown. Note that in general $w_0 \neq w(0)$. Although we will in
\cref{btdde:sec:parameter-dependent-center-manifold-reduction} only need the special
case where $\lambda = 0$, the results below hold for non-zero (complex)
eigenvalues as well.

Firstly, since $\lambda = 0$ is an eigenvalue, we need to assure that
\cref{btdde:eq:general_system_sunstar} is solvable. Therefore, let $\psi_1$, as given
in \cref{btdde:proposition:eigenvalues_multiplicity_k} with $k=2$ and $\lambda=0$, be
the adjoint eigenfunction of $A^\star$. Then
\cref{btdde:eq:general_system_sunstar} has a solution
$(\phi_{0},\phi)\in\mathcal{D}(A^{\odot\star})$ if and only if $(w_0,w)$
annihilates the null space $\mathcal N(\lambda I-\STAR A)$, i.e., if and only
if 
\begin{equation}\tag{FSC}
  \label{btdde:eq:FSC}
  \PAIR{(w_0,w)}{\psi_1} = 0.
\end{equation}
A proof can be found in \cite[Lemma 3.2]{Janssens:Thesis}. This condition is
often referred to as the \emph{Fredholm solvability condition}. 

\begin{proposition}
\label{btdde:prop:solution_double_eig}
Suppose $\lambda$ is a double eigenvalue of A and assume that
\cref{btdde:eq:general_system_sunstar} is consistent for a given $(w_0,w)\in
X^{\odot\star}$. Let $(q_0,q_1)$ and $(p_1,p_0)$ be the Jordan chains for
$\Delta(0)$ and $\Delta^T(0)$, respectively. Let $\phi_0$ be an eigenfunction of
$A$ as in \cref{btdde:proposition:eigenvalues_multiplicity_k}. Then the solution to
\cref{btdde:eq:general_system_sunstar} is given by
\[
v(\theta) = e^{\lambda\theta}\xi +
            \int_{\theta}^{0}e^{\lambda(\theta-s)}w(s)\,ds + 
            \gamma \phi_0(\theta),
            \qquad\theta\in[-h,0],
\]
with
\[
\xi = \INV{\Delta(\lambda)}\left[
            w_0 + 
            \int_{0}^{h}d\zeta(\theta) 
                \int_{-\theta}^{0}e^{-\lambda (\theta + s)}w(s) \,ds
       \right].
\]
and $\gamma$ some constant. 
\end{proposition}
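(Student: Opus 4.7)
The plan is to directly solve the two-component equation by unpacking the action of $\SUNSTAR{A}$ from \cref{btdde:eq:A_sunstar} together with the embedding \cref{btdde:eq:j}. Writing $jv = (v(0), v)$ and applying $\SUNSTAR{A}$, the equation $(\lambda - \SUNSTAR{A})jv = (w_0,w)$ splits into a scalar boundary condition and a pointwise ODE:
\begin{equation*}
\lambda v(0) - \PAIR{\zeta}{v} = w_0, \qquad \dot v(\theta) - \lambda v(\theta) = -w(\theta), \quad \theta \in [-h,0].
\end{equation*}
First I would solve the ODE by variation of parameters. With initial value $\xi \DEF v(0)$, an integrating factor argument yields
\begin{equation*}
v(\theta) = e^{\lambda\theta}\xi + \int_\theta^0 e^{\lambda(\theta-s)}w(s)\,ds,
\end{equation*}
which one checks by differentiation produces the $-w(\theta)$ inhomogeneity and has $v(0)=\xi$ as desired. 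This is the routine part.

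Next I would substitute this expression into the boundary condition to obtain a finite-dimensional equation for $\xi$. Using the definition of the pairing \cref{btdde:eq:lindde_shorthand} together with the definition of $\hat{\zeta}(\lambda)$ in \cref{btdde:eq:CharMatrix}, one computes
\begin{equation*}
\PAIR{\zeta}{v} = \hat{\zeta}(\lambda)\xi + \int_0^h d\zeta(\theta)\int_{-\theta}^0 e^{-\lambda(\theta+s)}w(s)\,ds,
\end{equation*}
so the boundary condition becomes
\begin{equation*}
\Delta(\lambda)\xi = w_0 + \int_0^h d\zeta(\theta)\int_{-\theta}^0 e^{-\lambda(\theta+s)}w(s)\,ds.
\end{equation*}
Denoting the right-hand side by $r$, this is a linear system in $\xi \in \RR^n$.

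The main obstacle is that $\Delta(\lambda)$ is \emph{singular}, since $\lambda$ is an eigenvalue of $A$: its nullspace is spanned by $q_0$ and, because $\lambda$ is a double eigenvalue of geometric multiplicity one, the cokernel is spanned by $p_0$. Therefore $\Delta(\lambda)\xi = r$ is solvable exactly when $p_0 r = 0$. This is precisely where the hypothesis that \cref{btdde:eq:general_system_sunstar} is consistent enters: by the Fredholm solvability condition \cref{btdde:eq:FSC} applied with the adjoint eigenfunction $\psi_1$ from \cref{btdde:proposition:eigenvalues_multiplicity_k}, one obtains $\PAIR{(w_0,w)}{\psi_1} = 0$, and unwinding the pairing \cref{btdde:eq:pairing_X_sun_star_X_sun} using the explicit form of $\psi_1$ yields exactly $p_0 r = 0$. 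Consequently a particular solution is given by $\xi = \INV{\Delta(\lambda)} r$, where $\INV{\Delta(\lambda)}$ denotes any fixed generalized inverse (for instance the Moore--Penrose pseudoinverse).

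Finally, the solution of the homogeneous equation $(\lambda - \SUNSTAR{A})jv = 0$ restricted to $\DOM(A)$ is exactly the one-dimensional eigenspace spanned by $\phi_0$, since the geometric multiplicity is one. Adding an arbitrary multiple $\gamma \phi_0$ to the particular solution gives the full family of solutions stated in the proposition. I would close by verifying that the resulting $v$ indeed lies in $\DOM(A)$ (i.e. $\dot v \in X$ and the boundary condition $\dot v(0) = \PAIR{\zeta}{v}$ is consistent with the construction), which is immediate from the two component equations.
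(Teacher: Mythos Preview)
Your approach is essentially identical to the paper's: split \cref{btdde:eq:general_system_sunstar} into the pointwise ODE and the boundary condition, solve the ODE by variation of parameters, substitute into the boundary condition to obtain $\Delta(\lambda)\xi = r$, and add the kernel contribution $\gamma\phi_0$. There is one indexing slip worth correcting: with the paper's convention the Jordan chain for $\Delta^T(\lambda)$ is $(p_{k-1},\ldots,p_0)$, so for $k=2$ it is $p_1$ (not $p_0$) that spans the cokernel of $\Delta(\lambda)$, and the finite-dimensional solvability condition reads $p_1 r = 0$; this is consistent with your correct choice of $\psi_1$ in the Fredholm condition, since $\psi_1$ has leading component $p_1$.
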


\begin{proof}
From formula \cref{btdde:eq:A_sunstar} we see that \cref{btdde:eq:general_system_sunstar} is nothing more than solving the
first order ordinary differential equation 
\begin{equation}
    \label{btdde:eq:inODE}
    \lambda v-\dot{v} = w,
\end{equation}
which must satisfy
\begin{equation}
    \label{btdde:eq:inODE_condition}
    \lambda v(0)-\int_{0}^{h}d\zeta(\theta)v(-\theta) = w_0.
\end{equation}

Solving \cref{btdde:eq:inODE} for $v \in \mathcal D(A)$ we obtain
\[
v(\theta) = e^{\lambda\theta}v(0) + \int_{\theta}^{0}e^{\lambda(\theta-s)}w(s)\,ds\qquad\left(\theta\in[-h,0]\right).
\]
It then follows from \cref{btdde:eq:inODE_condition} that
\[
    \Delta(\lambda)v(0) = w_0 +\int_{0}^{h}d\zeta(\theta)\int_{-\theta}^0 e^{-\lambda (\theta + s)}w(s) \,ds,
\]
where we used the definition of the characteristic matrix in \cref{btdde:eq:CharMatrix}.
Solving for $v(0)$ yields
\[
v(0) = \INV{\Delta(\lambda)}
        \left\{ w_0 +\int_{0}^{h}d\zeta(\theta)\int_{-\theta}^0 e^{-\lambda (\theta + s)}w(s) \,ds \right\} +\gamma q_{0},
\]
where $\gamma$ is some constant. Now define
\[
\tilde{v}(\theta) = e^{\lambda\theta}\xi + \int_{\theta}^{0}e^{\lambda(\theta-s)}w(s)\,ds,
\]
so that
\[
v(\theta) = \tilde{v}(\theta) + \gamma \phi_{0}(\theta).
\]
\end{proof}

\begin{remark}
We observe that the expression for $v(0)$ itself involves a bordered {\em matrix} inverse,
\[
\INV{\Delta}(\lambda) : \mathcal{R}(\Delta(\lambda)) \to \CC^n,
\]
which assigns the unique solution of the extended linear system
\[
  \Delta(\lambda)x = y, \qquad q_0 \cdot x = 0,
\]
to every $y \in \CC^n$ for which the system $\Delta(\lambda)x = y$ is consistent. In practice, $x=\INV{\Delta}(\lambda)y$ can be obtained by solving the nonsingular bordered \emph{matrix} system
\[
\begin{pmatrix}
\Delta(\lambda) & p_1^T \\
q_0^T & 0
\end{pmatrix}\begin{pmatrix}
x\\
s
\end{pmatrix}=\begin{pmatrix}
y\\
0
\end{pmatrix}
\]
for the unknown $(x,s) \in \CC^{n+1}$ that, by Cramer's rule, necessarily
satisfies $s = 0$. The properties of (finite dimensional) bordered linear
systems and their role in numerical bifurcation analysis are discussed more
extensively in \cite{Keller1987Numerical} and \cite[Chapter
3]{govaerts2000numerical}.
\end{remark}

In \cref{btdde:sec:parameter-dependent-center-manifold-reduction}, we will encounter
solely systems in which $\lambda=0$ and $w$ is of polynomial type. For this
situation, we have the following results.

\begin{corollary}
\label{btdde:corollary:sol_double_zero_eig_polynomial} 
Suppose that in addition to the assumptions in \cref{btdde:prop:solution_double_eig} that $\lambda = 0$ and the right-hand side in \cref{btdde:eq:general_system_sunstar} is given by
\[
\begin{pmatrix}
w_0 \\ w
\end{pmatrix}
=
\kappa \rss - j\left(\theta \mapsto c_0 + c_1 \theta + \dots + c_n \theta^n\right),
\]
then
\[
    v(\theta) = \xi + c_0 \theta + \frac{c_1}2 \theta^2 + \dots + \frac{c_n}{n + 1} \theta^{n + 1} + \gamma \phi_0(\theta), 
                \qquad\left(\theta\in[-h,0]\right),
\]
with
\begin{align*}
    \xi & = \INV{\Delta}(0)\left[\kappa - \Delta'(0) c_0  - \frac{\Delta''(0)}2 c_1 - \dots - \frac{\Delta^{(n + 1)}(0)}{n + 1} c_n \right],
\end{align*}
is the solution to the system
\[
-\SUNSTAR A j v  = 
\begin{pmatrix}
w_0 \\ w
\end{pmatrix}.
\]
\end{corollary}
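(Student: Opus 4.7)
The plan is to deduce this corollary as a direct specialization of \cref{btdde:prop:solution_double_eig} to $\lambda = 0$ with a polynomial forcing term. No new structural idea is required; the work consists of reading off $w_0$ and $w$ from the displayed right-hand side and evaluating the two integrals appearing in the previous proposition's formula in closed form.

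First I would identify the components. Since $\rss_i = (e_i,0)$ and $j(\phi) = (\phi(0),\phi)$, the polynomial $p(\theta) \DEF c_0 + c_1\theta + \dots + c_n\theta^n$ contributes $c_0$ to the first component at $\theta = 0$, so the right-hand side decomposes as $w_0 = \kappa - c_0$ and $w(\theta) = -p(\theta)$. I then specialize the formula for $v(\theta)$ in \cref{btdde:prop:solution_double_eig} by setting $\lambda = 0$, which collapses every exponential factor to $1$. The inner integral becomes $\int_\theta^0 w(s)\,ds = \sum_{k=0}^n c_k\,\theta^{k+1}/(k+1)$, yielding the stated polynomial expression for $v(\theta)$.

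For $\xi$, I would compute the double integral $\int_0^h d\zeta(\theta)\int_{-\theta}^0 w(s)\,ds$ termwise: using $\int_{-\theta}^0 s^k\,ds = (-1)^{k}\theta^{k+1}/(k+1)$, this equals $\sum_{k=0}^n \frac{(-1)^{k+1}}{k+1} c_k \int_0^h \theta^{k+1}\,d\zeta(\theta)$. Combining with $w_0 = \kappa - c_0$ and applying the moment identities $\Delta'(0) = I + \int_0^h \theta\,d\zeta(\theta)$ and $\Delta^{(k+1)}(0) = (-1)^{k}\int_0^h \theta^{k+1}\,d\zeta(\theta)$ for $k \ge 1$ (obtained by differentiating $\Delta(z) = zI - \hat\zeta(z)$ as already done inside the proof of \cref{btdde:proposition:eigenvalues_multiplicity_k}) converts the bracketed expression into $\kappa - \Delta'(0)c_0 - \sum_{k=1}^n \frac{\Delta^{(k+1)}(0)}{k+1} c_k$, which is precisely the claimed formula after multiplying by $\INV{\Delta}(0)$.

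The only real obstacle is careful sign bookkeeping. The alternating $(-1)^k$ from the antiderivative of $s^k$ on $[-\theta,0]$ must combine with the alternating sign in $\Delta^{(k+1)}(0)$ to produce a uniform minus in front of each $\Delta^{(k+1)}(0)c_k/(k+1)$, and the $c_0$-term requires separate handling because the $zI$ summand in $\Delta(z) = zI - \hat\zeta(z)$ contributes an extra $I$ to $\Delta'(0)$ that precisely absorbs the stray $-c_0$ coming from $w_0$. Once the signs line up, the consistency hypothesis (i.e.\ the \cref{btdde:eq:FSC}) guarantees that $\INV{\Delta}(0)$ is applicable and the proof is complete.
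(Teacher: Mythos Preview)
Your proposal is correct and matches exactly the approach the paper intends: the corollary is stated without proof because it is a direct specialization of \cref{btdde:prop:solution_double_eig}, and your computation of the two integrals together with the moment identities for $\Delta^{(k)}(0)$ is precisely how one fills in the details. The sign bookkeeping you flag as the main obstacle is handled correctly, including the separate treatment of the $c_0$ term where the $I$ from $\Delta'(0)$ absorbs the $-c_0$ coming from $w_0$.
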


\begin{remark}
In \cref{btdde:sec:parameter-dependent-center-manifold-reduction} we will use the shorthand notation
\[
    v = \BINV 0 (\kappa - w)
\]
for the solutions in \cref{btdde:corollary:sol_double_zero_eig_polynomial}.
\end{remark}

\begin{lemma}
\label{btdde:lemma:pairing_with_adjoint_eigenfunctions}
Suppose that in addition to the assumptions in \cref{btdde:prop:solution_double_eig} that $\lambda = 0$ and
let $w\in X$ be given by
\[
    w \colon [-h,0] \rightarrow \mathbb R^n 
      \colon \theta \mapsto c_0 + c_1 \theta + \dots + c_n \theta^n.
\]
Then the pairing with the adjoint (generalized) eigenfunctions $\psi_0$ and
$\psi_1$ in \cref{btdde:proposition:eigenvalues_multiplicity_k} with $k=0$ and
$\lambda=0$ are given by
\begin{align*}
    \left< \psi_0, w \right> &=
    p_0 \left( \Delta'(0) c_0 + \frac{\Delta''(0)}2 c_1 + \dots + \frac{\Delta^{(n + 1)}(0)}{n + 1} c_n \right) \\
    & \quad + p_1 \left( \frac{\Delta''(0)}{2} c_0 + 
        \frac{\Delta'''(0)}{3\times 2} c_1 + \dots + \frac{\Delta^{(n + 2)}(0)}{(n + 2)(n + 1)} c_n \right)
\end{align*}
and
\begin{align*}
    \left< \psi_1, w \right> &=
    p_1 \left( \Delta'(0) c_0 + \frac{\Delta''(0)}2 c_1 + \dots + \frac{\Delta^{(n + 1)}(0)}{n + 1} c_n \right),
\end{align*}
respectively.
\end{lemma}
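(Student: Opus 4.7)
The plan is to substitute the explicit expressions for $\psi_0$ and $\psi_1$ from \cref{btdde:proposition:eigenvalues_multiplicity_k} (specialised to $k=2$ and $\lambda=0$) into the duality pairing and to simplify the resulting iterated integrals in terms of the derivatives $\Delta^{(j)}(0)$ of the characteristic matrix. Since $\psi_i \in \SUN{X}$ is identified with the pair $(p_i,\psi_i')$ via the representation \cref{btdde:eq:xsun_dde}, the pairing with $w \in X$ reads
\[
  \langle \psi_i, w \rangle = p_i\,w(0) + \int_0^h \psi_i'(\theta)\,w(-\theta)\,d\theta.
\]

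First I would differentiate the formulas in \cref{btdde:eq:eigenfunction_and_adjoint_eigenfunctions} to obtain
\[
  \psi_1'(\theta) = p_1 \int_\theta^h d\zeta(s),
  \qquad
  \psi_0'(\theta) = p_0 \int_\theta^h d\zeta(s) + p_1 \int_\theta^h (\theta-s)\,d\zeta(s),
\]
and insert these into the pairing. Next I would apply Fubini's theorem on the region $\{(\theta,s): 0 \le \theta \le s \le h\}$ to swap the $d\theta$ and $d\zeta(s)$ integrations, and expand $w(-\theta) = \sum_{k=0}^n c_k (-1)^k \theta^k$. This reduces every contribution to a finite linear combination of polynomial integrals of the type $\int_0^s \theta^k\,d\theta$ and $\int_0^s (\theta-s)\theta^k\,d\theta$, producing moments $\int_0^h s^m\,d\zeta(s)$ with $m \ge 1$.

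Then I would use the identities
\[
  \Delta'(0) = I + \int_0^h \theta\,d\zeta(\theta),
  \qquad
  \Delta^{(n)}(0) = (-1)^{n+1}\int_0^h \theta^n\,d\zeta(\theta), \quad n \ge 2,
\]
already recorded in the proof of \cref{btdde:proposition:eigenvalues_multiplicity_k}, to rewrite each moment as a derivative of $\Delta$ at zero. After tracking the alternating signs arising from $w(-\theta)$ and from the two $(-1)$'s in $(-1)^k \cdot (-1)^{k+\cdot}$, the polynomial prefactors telescope to $\tfrac{1}{k+1}$ in the $\psi_1$ computation and to $\tfrac{1}{(k+1)(k+2)}$ in the $p_1$ part of the $\psi_0$ computation, matching the claimed formulas term by term.

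The main obstacle is the bookkeeping for the $c_0$-coefficient: the moment $\int_0^h s\,d\zeta(s)$ equals $\Delta'(0) - I$ rather than simply a derivative of $\Delta$, so an extra contribution $-p_i c_0$ appears inside the integral. I would show that it is precisely cancelled by the boundary term $p_i\,w(0) = p_i c_0$, yielding the clean coefficient $\Delta'(0)$ in front of $c_0$ in both formulas. For the $p_1$ part of $\langle \psi_0, w\rangle$ no such correction is needed, since the smallest moment encountered there is $\int_0^h s^2 d\zeta(s) = -\Delta''(0)$, which already involves only $\Delta''$. Once this cancellation is identified, the rest of the computation is a purely mechanical verification.
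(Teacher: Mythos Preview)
Your proposal is correct and follows exactly the computational template the paper uses in the proof of \cref{btdde:proposition:eigenvalues_multiplicity_k}: write out the pairing, insert the explicit form of $\psi_i'$, apply Fubini on $\{0\le\theta\le s\le h\}$, integrate the monomials, and then identify the resulting moments $\int_0^h s^m\,d\zeta(s)$ with the derivatives $\Delta^{(m)}(0)$. The paper in fact states this lemma without a separate proof, relying on precisely these manipulations already worked out for the pairings $\langle\psi_i,\phi_j\rangle$; your handling of the $c_0$ boundary cancellation is the only point not made explicit there, and you have it right.
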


\section{Parameter-dependent center manifold reduction combined with normalization and time-reparametrization}
\label{btdde:sec:Center_manifold_reduction}

In this section, we combine the results from
\cite{Bosschaert@Interplay} and \cite{Switching2019}. That is, we lift the normalization technique
for local bifurcations of ODEs to the infinite dimensional settings of DDEs
\cite{Switching2019}, while simultaneously incorporating a time-reparametrization 
to allow for a further simplification of the normal form \cite{Bosschaert@Interplay}.
Thus, suppose that $0 \in X$ is a stationary state of \cref{btdde:eq:classicalDDE} at the
critical parameter value $0 \in \RR^p$ and assume there are $n_0 \ge 1$
eigenvalues on the imaginary axis, counting algebraic multiplicities. Let $P_0$
be the corresponding \emph{real} spectral projector on $X$, so the range $X_0$
of $P_0$ is the \emph{real} $n_0$-dimensional center eigenspace.
\cite[Corollary 20]{Switching2019}  applies to give a parameter-dependent local
center manifold $\CM(\alpha)$ for \cref{btdde:eq:classicalDDE}.

We allow for the introduction of a new parameter $\beta$ defined in a
neighborhood of $0 \in \RR^p$ such that $\alpha = K(\beta)$ for some locally
defined $C^k$-diffeomorphism $K : \RR^p \to \RR^p$ that is to be determined
below. If $u : I \to X$ with $u(t) \DEF x_t \in \CM(\alpha)$ is as in
\cite[Corollary 20]{Switching2019}, then $u$ is differentiable on $I$ and
satisfies
\begin{equation}
  \label{btdde:eq:ODEonCM}
  j\dot{u}(t) = \SUNSTAR{A}ju(t) + (D_2F(0,0)K(\beta))\rss 
    + R(u(t),K(\beta)), \qquad \forall\,t \in I,
\end{equation}
where $R$ encodes the nonlinear part of $F$ as given by $G(u(t),K(\beta))\rss$.
Choose a basis $\Phi$ of $X_0$ and let $\mathcal{H} : \RR^{n_0} \times \RR^p
\to X$ be a locally defined $C^k$-smooth parametrization of $\CM(\alpha)$ with
respect to $\Phi$ and in terms of the new parameter $\beta$. For every $t \in
I$ we define $v(t) \in \RR^{n_0}$ as the coordinate vector of $P_0 u(t)$ with
respect to $\Phi$. Then $v : I \to \RR^{n_0}$ satisfies a parameter-dependent
ordinary differential equation of the form
\begin{equation}
  \label{btdde:eq:ODEexpansion}
  \dot{v}(\eta) = \sum_{|\nu| + |\mu| \geq 1}\frac{1}{\nu!\mu!}g_{\nu\mu}v^{\nu}(\eta)\beta^{\mu}.
\end{equation}
The multi-indices $\nu$ and $\mu$ have lengths $n_0$ and $p$, respectively. We
assume that \cref{btdde:eq:ODEexpansion} is known. Since $\mathcal{H}$ parametrizes
$\CM(\alpha)$,
\begin{equation}
    \label{btdde:eq:u_H_relation}
    u(t(\eta)) = \mathcal{H}(v(\eta), \beta), \qquad t \in I,
\end{equation}
with both $u$ and $v$ depending on the parameter, although this is left
implicit in the notation. Next, let the time $t$ in \cref{btdde:eq:ODEonCM} and the
time $\eta$ in the normal form \cref{btdde:eq:ODEexpansion} be related through the
parameter-dependent time-rescaling
\begin{equation}
\label{btdde:eq:vartheta}
    \frac{dt}{d\eta} = \vartheta(v, \beta), \qquad 
        \vartheta\colon \mathbb R^{n_c} \times \mathbb R^2 \to \mathbb R.
\end{equation}
Substituting the above relation \cref{btdde:eq:u_H_relation} into \cref{btdde:eq:ODEonCM}
and taking into account \cref{btdde:eq:vartheta} produces the \emph{homological equation}
\begin{equation}
  \label{btdde:eq:homological_equation}
  \tag{HOM}
  \left ( \SUNSTAR{A}j\mathcal{H}(v,\beta) + (D_2F(0,0)K(\beta))\rss 
          + R(\mathcal{H}(v,\beta),K(\beta)) \right) \vartheta(v,\beta)
   = jD_1\mathcal{H}(v,\beta)\dot{v},
\end{equation}
with $\dot{v}$ given by the parameter-dependent normal form
\cref{btdde:eq:ODEexpansion}. The unknowns in \cref{btdde:eq:homological_equation} are
$\mathcal{H}$, $K$, $\vartheta$, and the coefficients $g_{\nu\mu}$ from
\cref{btdde:eq:ODEexpansion}. For $r, s \ge 0$ with $r + s \ge 1$ we denote by
$D_1^rD_2^sF(0,0) : X^r \times [\RR^p]^s \rightarrow \RR^n$ the mixed Fr\'echet
derivative of order $r + s$, evaluated at $(0,0) \in X \times \RR^p$, with the
understanding that at most one of the factor spaces $X^r$ or $[\RR^p]^s$ is
absent if either $r = 0$ or $s = 0$. We expand the nonlinearity $R$ as
\begin{equation}
  \label{btdde:eq:R}
  R(\phi,\alpha) = \sum_{r + s > 1}\frac{1}{r!s!}D_1^rD_2^sF(0,0)(\phi^{(r)},\alpha^{(s)})\rss,
\end{equation}
where $\phi^{(r)} \DEF (\phi,\dots,\phi)\in X^r$ and $\alpha^{(s)} \DEF
(\alpha,\dots,\alpha)\in [\RR^p]^s$. The mappings $\mathcal{H}$, $K$, and
$\theta$ can be expanded as
\begin{equation}
\label{btdde:eq:hKthetaexpansion}
\begin{aligned}
    \mathcal{H}(v,\beta)\ ={}& \sum_{|\nu| + |\mu| \geq 1}\dfrac{1}{\nu!\mu!}h_{\nu\mu}v^{\nu}\beta^{\mu}, \\
    K(\beta) ={}& \sum_{|\mu| \geq 1}\dfrac{1}{\mu!}K_{\mu}\beta^{\mu}, \\
    \vartheta(w, \beta) ={}& \sum_{|\nu|+|\mu| \geq 0} \frac1{\nu!\mu!} \theta_{\nu\mu} w^\nu \beta^\mu.
\end{aligned}
\end{equation}
Substituting \cref{btdde:eq:ODEexpansion,btdde:eq:R,btdde:eq:hKthetaexpansion} into
\cref{btdde:eq:homological_equation}, collecting coefficients of terms
$v^{\nu}\beta^{\mu}$ from lower to higher order and solving the resulting
linear systems, one can solve recursively for the unknown coefficients
$g_{\nu\mu}$, $h_{\nu\mu}$, and $K_{\mu}$ by applying the Fredholm alternative
and taking bordered inverses, as explained in \cref{btdde:sec:solvability}.

\begin{remark}
To determine the coefficients $g_{\nu\mu}$, $h_{\nu\mu}$, $K_{\mu}$ and
$\theta_{\nu\mu}$ that are needed to include in order to translate asymptotics
to solutions in the normal form \cref{btdde:eq:ODEexpansion} in such a way that
the approximation order to the asymptotics are also obtained in the original
system under consideration, is a non-trivial task. Indeed, one needs to
understand precisely which coefficients $g_{\nu\mu}$ in
\cref{btdde:eq:ODEexpansion} affect the obtained asymptotic up to a certain
order. Then, using \cite[Proposition 1]{Bosschaert@Interplay}, we can derive
which coefficients $h_{\nu\mu}$, $K_{\mu}$ and $\theta_{\nu\mu}$ are sufficient
to include into the transformations.
\end{remark}

\ifarxiv
\section{Parameter-dependent center manifold reduction near Bog\-danov--Takens points}
\else
\section{Parameter-dependent center manifold reduction near Bogdanov--Takens points}
\fi
\label{btdde:sec:parameter-dependent-center-manifold-reduction} 
Using the method as outlined in \cref{btdde:sec:Center_manifold_reduction}, we derive
here the coefficients needed to translate the homoclinic third-order asymptotics
emanating from generic and transcritical codimension two Bogdanov--Takens
bifurcations to the parameter-dependent center manifold. Additionally, we derive
asymptotics for the codimension one equilibria bifurcations emanating from these
Bogdanov--Takens bifurcation points.

Thus, suppose that \cref{btdde:eq:classicalDDE} has an equilibrium $x_0 \equiv 0$ at
the critical parameter value $\alpha_0 = (0,0) \in \mathbb R^2$ with a double
(but not semisimple) zero eigenvalue
\begin{equation}
    \lambda_{1,2} = 0,
\end{equation}
which are the only eigenvalues on the imaginary axis. Furthermore, we assume
that the critical normal coefficients $a$ and $b$, to be defined below, are
non-zero. We then consider two different cases, depending on whether the
equilibrium remains fixed under parameter variation or not. For both cases there
are additional transversality conditions which need to be met, guaranteeing the
local invertibility of the parameter mapping $K$, see \cref{btdde:eq:K_expansion-BT}.

\subsection{Generic Bogdanov--Takens bifurcation}
\label{btdde:sec:generic_bogdanov-takens}

The $C^\infty$-equivalent normal form on the parameter-dependent
center manifold is
\begin{equation}
\label{btdde:eq:normal_form_orbital}
\begin{cases}
\begin{aligned}
	\dot w_0 & =  w_1, \\
	\dot w_1 & =  \beta_1 + \beta_2 w_1 + aw_0^2 + b w_0 w_1 + w_0^2 w_1
								h(w_0,\beta) + w_1^2 Q(w_0,w_1,\beta),
\end{aligned}
\end{cases}
\end{equation}
where $h$ is $C^\infty$ and $Q$ is $N$-flat for an a priori given $N$,
see~\cite{Broer1991}. Here, the dot represents the derivative with respect to
the new time $\eta$ of $w_i(\eta)(i = 0,1)$.  Furthermore, it is shown in
\cite{Bosschaert@Interplay} that we can assume $h(0,0) = 0$. 
By \cite[Proposition 1]{Bosschaert@Interplay}, it is both necessary and sufficient
in order to translate the third-order homoclinic predictor to the system
\cref{btdde:eq:classicalDDE} to expand the functions $R \colon X \times \mathbb R^2 \rightarrow \SUNSTAR{X}$, 
$\mathcal{H} \colon \mathbb R^2 \times \mathbb R^2 \rightarrow X$,
$K \colon \mathbb R^2 \rightarrow \mathbb R^2$, and $\vartheta \colon \mathbb R^2 \rightarrow \mathbb R$ 
 defined in \cref{btdde:sec:Center_manifold_reduction}
as follows
\begin{align}
R(u,\alpha) = & \label{btdde:eq:R_expansion_BT}
    \left(\frac{1}{2} B(u,u) + A_1(u,\alpha) + \frac{1}{2} J_2(\alpha,\alpha) + \frac{1}{6} C(u,u,u) 
    + \frac{1}{2} B_1(u,u,\alpha) \right. \\
    &\phantom{\Bigl(}+ \left. \frac{1}{2} A_2(u,\alpha,\alpha) + \frac{1}{6} J_3(\alpha,\alpha,\alpha) 
    + \mathcal O(\|(u\|,\|\alpha)\|^4) \right) \rss , \nonumber \\
\mathcal H(w,\beta) = {}& \label{btdde:eq:h_expansion-BT}
    h_{0010}\beta_1 + h_{0001} \beta_2 
    + \frac12 h_{2000}w_0^2 + h_{1100}w_0w_1 + \frac12 h_{0200}w_1^2 \\
    & + h_{1010}w_0\beta_1 + h_{1001}w_0\beta_2 + h_{0110}w_1\beta_1 
    + h_{0101}w_1\beta_2 + \frac12 h_{0002}\beta_2^2\nonumber \\
    & + h_{0011}\beta_1\beta_2 + \frac16 h_{3000}w_0^3 + \frac12 h_{2100}w_0^2w_1 
    + h_{1101}w_0w_1\beta_2 + \frac12 h_{2001}w_0^2\beta_2\nonumber \\
    & + \frac{1}{6}h_{0003}\beta_2^3 + \frac12 h_{1002}w_0\beta_2^2 
    + \frac12 h_{0102}w_1\beta_2^2 \nonumber \\
    & + \mathcal{O}(|w_1|^3 + |w_0w_1^2| + |\beta_2w_1^2| + |\beta_1|\|w\|^2
    +|\beta_1^2|\|w\| + |\beta_1^2| + \|(w,\beta)\|^4), \nonumber \\
K(\beta) = {}& \label{btdde:eq:K_expansion-BT}
    K_{10}\beta_1 + K_{01}\beta_2 + \frac{1}{2}K_{02}\beta_2^{2} 
	+ K_{11}\beta_1\beta_2 + \frac16 K_{03} \beta_2^3 \\
    &+ \mathcal{O}(|\beta_1|^2 + |\beta_1||\beta_2|^2 + |\beta_1|^2|\beta_2|  
	+ |\|\beta\|^4), \nonumber \\
\vartheta(w,\beta) = {}& \label{btdde:eq:theta_expansion_bt}
    1 + \vartheta_{1000}w_0 + \vartheta_{0001} \beta_2 
    + \mathcal O\left(|w_1| + |\beta_2| + \|(w,\beta)\|^2\right).
\end{align}
Here $B$, $A_1$, $J_2$, $C$, $B_1$, $A_2$, and $J_3$ are the standard
multilinear forms arising from the expansion of $F(u,\alpha)$. For example,
\[
B(u,u) = D^2_1F(0,0)(u,u),~J_2(\alpha,\alpha) = D_2^2F(0,0)(\alpha,\alpha),~B_1(u,u,\alpha) = D^1_2D^2_1F(0,0)(u,u,\alpha),
\]
etc. Explicit formulas for the multilinear forms for the simplest DDE
\cref{btdde:eq:discreteDDEs} are given in \cite[Section 6]{Switching2019}.

We insert the expansions
\cref{btdde:eq:R_expansion_BT,btdde:eq:h_expansion-BT,btdde:eq:K_expansion-BT,btdde:eq:theta_expansion_bt}
into the homological equation \cref{btdde:eq:homological_equation}. 

\subsubsection{(Generalized) eigenfunctions}
By collecting the coefficients of the linear terms in $w$ in the
homological equation, we obtain precisely the systems
defining the (generalized) eigenfunctions. By
\cref{btdde:proposition:eigenvalues_multiplicity_k} with $k=2$ and $\lambda=0$ we
obtain
\begin{align*}
    \phi_0  & =  \vartheta \mapsto q_0, \\
    \phi_1  & =  \vartheta \mapsto \vartheta q_0 + q_1, \\
    \psi_1 & = \left( p_1, \vartheta \mapsto p_1 \int_\vartheta^h \, d\zeta(s) \right), \nonumber \\
    \psi_0 & = \left( p_0, \vartheta \mapsto p_0 \int_\vartheta^h \, d\zeta(s)
                            + p_1 \int_{\vartheta}^h (\vartheta-s)\,d\zeta(s) \right),\nonumber 
\end{align*}
where $(q_0,q_1)$ and $(p_1,p_0)$ are the Jordan chain for $\Delta(0)$ and $\Delta^T(0)$, respectively.
Furthermore, we assume the eigenfunctions to be normalized such that
\begin{equation}
    \label{btdde:eq:bt_normalization_condition_eigenfunctions}
    \langle\psi_i,\phi_j\rangle = \delta_{ij}, \qquad 0\leq i,j \leq 1.
\end{equation}

\begin{remark}
Note that the normalization condition above does not uniquely define the eigenfunction. Indeed,
the transformation
\begin{equation}
    \psi_1 \mapsto \frac1c \psi_1, \qquad
    \psi_0 \mapsto \frac1c \psi_0, \qquad
    \phi_0 \mapsto c \phi_0, \qquad
    \phi_1 \mapsto c \phi_1,
\end{equation}
for some non-zero constant $c$, leaves \cref{btdde:eq:bt_normalization_condition_eigenfunctions} invariant.
The same holds true for the transformation
\begin{equation}
    \psi_0 \mapsto \psi_0 + \tilde c \psi_1, \qquad
    \phi_1 \mapsto \phi_1 - \tilde c \phi_0, \qquad
\end{equation}
for some constant $\tilde c$. In~\cite{Kuznetsov2005practical}, the condition
\begin{equation}
\label{btdde:eq:q0q0} 
q_0^T q_0 = 1,\qquad q_1^T q_0 = 0,
\end{equation}
is imposed to uniquely define the vectors $\{q_0,q_1,p_1,p_0\}$ up to a plus or
minus sign. However, since the non-uniqueness in the eigenfunctions does not alter the
order of accuracy of the homoclinic predictors, we will not explicitly impose \cref{btdde:eq:q0q0}.
\end{remark}

\subsubsection{Critical coefficients}
\label{btdde:sec:critical_coefficients}
Collecting the $w_0^2$, $w_0w_1$ and $w_1^2$ terms in the homological equation \cref{btdde:eq:homological_equation},
lead to the systems
\begin{align}
-\SUNSTAR A jh_{2000} & =  B(\phi_0,\phi_0)\rss - 2aj\phi_1 , \label{btdde:eq:Assh2000}\\
-\SUNSTAR A jh_{1100} & =  B(\phi_0,\phi_1)\rss - j\left(b\phi_1 - \vartheta_{1000} \phi_0 + h_{2000}\right), \label{btdde:eq:Assh1100} \\
-\SUNSTAR A jh_{0200} & =  B(\phi_1,\phi_1)\rss - 2j h_{1100}. \label{btdde:eq:Assh0200}
\end{align}

Pairing equations \cref{btdde:eq:Assh2000,btdde:eq:Assh1100} with the adjoint
eigenfunctions $\psi_1$ and $\psi_0$ yields critical normal form coefficients
\begin{align*}
a & = \dfrac{1}{2}p_1B(\phi_0,\phi_0),\\
b & = p_0B(\phi_0,\phi_0) + p_1B(\phi_0,\phi_1).
\end{align*}

Now that \cref{btdde:eq:Assh2000,btdde:eq:Assh1100} are solvable, we use
\cref{btdde:corollary:sol_double_zero_eig_polynomial} to define the functions
\begin{align*}
\hat h_{2000} & = \BINV 0 \left( B(\phi_0,\phi_0) - 2aj\phi_1 \right), \\
\hat h_{1100} & = \BINV 0 \left( B(\phi_0,\phi_1) - (b\phi_1 + \hat h_{2000}) \right).
\end{align*}
It follows that the general solutions of the systems \cref{btdde:eq:Assh2000,btdde:eq:Assh1100}
are given by
\begin{align*}
h_{2000} &= \hat h_{2000} + \gamma_1 \phi_0, \\
h_{1100} &= \hat h_{1100} + \gamma_1 \phi_1 - \vartheta_{1000} \phi_1 + \gamma_2 \phi_0.
\end{align*}
The constant $\gamma_1$ is determined by the solvability condition from \cref{btdde:eq:Assh0200}, which gives
\begin{equation*}
\gamma_1 = p_0  B(q_0,q_1) - \langle \psi_0, \hat h_{2000} \rangle 
								+ \frac12 p_1 B(q_1,q_1) + \vartheta_{1000},
\end{equation*}
where $\langle \psi_0,\hat h_{2000}\rangle$ is given through \cref{btdde:lemma:pairing_with_adjoint_eigenfunctions}.

To determine the constant $\gamma_2$ and the coefficient $\vartheta_{1000}$ we
consider the $w_0^3$ and $w_0^2w_1$ terms in the homological equation \cref{btdde:eq:homological_equation}. After
some simplification, we obtain the systems
\begin{align}
\label{btdde:eq:Assh3000}
-\SUNSTAR{A} jh_{3000} ={}& \left[3 B(h_{2000},\phi_0) + C(\phi_0,\phi_0,\phi_0)\right]\rss 
                            - 6a j\left( h_{1100} - \vartheta_{1000} \phi_1 \right), \\
\label{btdde:eq:Assh2100}
-\SUNSTAR{A} jh_{2100} ={}& \left[2 B(h_{1100},\phi_0) + B(h_{2000},\phi_1) + C(\phi_0,\phi_0,\phi_1)\right]\rss \\
                         & - j \left( 2 a h_{0200} + 2 b h_{1100} + h_{3000} 
                            - 2 \vartheta_{1000} (b\phi_1 - \vartheta_{1000}\phi_0 +h_{2000})\right).
\end{align}

The solvability condition of the first equation determines $\vartheta_{1000}$ as
\begin{equation}
\label{btdde:eq:theta1000}
\vartheta_{1000} = -\frac1{12a} p_1 \left\{ 
			3B(\hat h_{2000},\phi_0) + C(\phi_0,\phi_0,\phi_0)
            \right\} + \frac12 \langle \psi_1, \hat h_{1100} \rangle .
\end{equation}

The solvability condition for the system in \cref{btdde:eq:Assh2100} yields, after a
rather lengthy calculation, that $\gamma_2$ is determined by 
\begin{align}
\label{btdde:eq:gamma_2}
\gamma_2 &= \frac1{6a} 
\bigg[ p_1\left\{ 2 B(\hat h_{1100},q_0) + B(\hat h_{2000},q_1) + C(q_0,q_0,q_1) \right\} 
        +2a p_0 B(q_1,q_1)  \nonumber \\
	& \qquad - b p_1 B(\phi_1,\phi_1) + p_0 \left( 3B(\hat h_{2000},q_0) + C(q_0,q_0,q_0) \right) 
    + 3 \gamma_1 b - 10 a \langle \hat h_{1100}, \psi_0 \rangle \bigg]. \nonumber 
\end{align}

Since the systems in \cref{btdde:eq:Assh0200,btdde:eq:Assh2100,btdde:eq:Assh3000} are now all consistent,
we are allowed to take the bordered inverses to obtain
\begin{align*}
h_{0200} ={}& \BINV 0 \left(B(\phi_1,\phi_1) - h_{1100}\right), \\
h_{3000} ={}& \BINV 0 \left(3 B(h_{2000},\phi_0) + C(\phi_0,\phi_0,\phi_0)
                 - 6a \left( h_{1100} - \vartheta_{1000} \phi_1 \right)\right) \\
h_{2100} ={}& \BINV 0 \left(2 B(h_{1100},\phi_0) + B(h_{2000},\phi_1) + C(\phi_0,\phi_0,\phi_1) \right. \\
                         & - \left.\left( 2 a h_{0200} + 2 b h_{1100} + h_{3000} 
                            - 2 \vartheta_{1000} (b\phi_1 - \vartheta_{1000}\phi_0 +h_{2000})\right)\right).
\end{align*}

\subsubsection{Parameter-dependent linear coefficients}
The coefficients of the linear terms in $\beta$ give the
systems
\begin{equation}
\label{btdde:eq:Assh0010}
\begin{aligned}
-\SUNSTAR Ajh_{0001} &= J_1K_{01}, \\
-\SUNSTAR Ajh_{0010} &= J_1K_{10} - \phi_1.
\end{aligned}
\end{equation}
Since $p_1$ and $J_1$ are known, we can calculate 
\begin{equation*}
    \nu := (p_1 J_1)^T.
\end{equation*}
By the transversality condition, the vector $\nu$ is nonzero. It then follows
from the Fredholm alternative that
\begin{equation*}
\begin{aligned}
K_{01}   &= \delta_1\hat K_{01}, \\
h_{0001} &= \delta_1 \left( \hat h_{0001} + \gamma_3 \phi_0 \right), \\
K_{10}   &= \hat K_{10} + \delta_2 K_{01}, \\
h_{0010} &= \hat h_{0010} + \delta_2 h_{0001} + \gamma_4 \phi_0,
\end{aligned}
\end{equation*}
where
\begin{equation*}
\begin{aligned}
\hat K_{10}   &= \frac1{\|\nu\|^2}\nu, \\
\hat h_{0010} &= \BINV 0 \left( J_1K_{10} - \phi_1 \right) \\
\hat K_{01}   &=
\begin{pmatrix}
    0 & -1 \\ 1 & 0
\end{pmatrix} \hat K_{10}, \\
\hat h_{0001} &= \BINV 0 \left( J_1\hat K_{01} \right),
\end{aligned}
\end{equation*}
and $\delta_{1,2}$, $\gamma_{3,4}$ are real constants determined by the
solvability condition of the $w\beta$ terms in the homological equation.
Collecting the corresponding systems in the homological equation yields 
\begin{align}
    -\SUNSTAR Ajh_{1001} &= \left(B(h_{0001},\phi_0) + A_1(\phi_0,K_{01})\right)\rss, \label{btdde:eq:Assh1001} \\
    -\SUNSTAR Ajh_{0101} &= \left(B(h_{0001},\phi_1) + A_1(\phi_1,K_{01})\right)\rss-j(h_{1001} + \phi_1 - \vartheta_{0001}\phi_0), \label{btdde:eq:Assh0101}\\
    -\SUNSTAR Ajh_{1010} &= \left(B(h_{0010},\phi_0) + A_1(\phi_0,K_{10})\right)\rss-j(h_{1100} - \vartheta_{1000}\phi_1), \nonumber \\
    -\SUNSTAR Ajh_{0110} &= \left(B(h_{0010},\phi_1) + A_1(\phi_1,K_{10})\right)\rss-j(h_{0200} + h_{1010}) \nonumber.
\end{align}
The solvability condition for the first two systems yields
\begin{equation*}
\begin{aligned}
\gamma_3 &= -\frac{p_1 \left( 
				B(\hat h_{0001},\phi_0) + A_1(\phi_0,\hat K_{01}) \right)}{2a}, \\
\delta_1 &= \frac{1}{p_1 \left(
			B(\hat h_{0001},\phi_1) + A_1(\phi_1,\hat K_{01}) \right) + p_0
      \left( B(\hat h_{0001},\phi_0) + A_1(\phi_0,\hat K_{01}) \right) + \gamma_3 b},
\end{aligned}
\end{equation*}
while the solvability condition for the latter two systems yields
\begin{equation*}
\begin{aligned}
    \gamma_4 &= \frac{\left< \psi_1, h_{1100}  \right> - \vartheta_{1000} - p_1 \left( B(\hat
				h_{0010},\phi_0) + A_1(\phi_0,\hat K_{10})\right)}{2a} ,\\
\delta_2 &= -p_1 \left( B(\hat h_{0010},\phi_1) + A_1(\phi_1,\hat K_{10}) \right)
    - \gamma_4 b + \left< \psi_1, h_{0200} \right> \\
	& \quad - p_0 \left( B(\hat h_{0010},\phi_0) + A_1(\phi_0,\hat K_{10}) \right) + \left< \psi_1, h_{1100} \right>.
\end{aligned}
\end{equation*}
Note that the denominator in $\delta_1$ is nonzero by the transversality
condition.

\subsubsection{Coefficients \texorpdfstring{$h_{1010} \text{ and } h_{0110}$}{h1010 and h0110}}

Since we do not need to use the non-uniqueness in the systems for the
coefficients $h_{1010}$ and $h_{0110}$ to simplify higher-order systems,
it is sufficient to let
\begin{equation*}
\begin{aligned}
    h_{1010} &= \BINV 0 \left(B(h_{0010},\phi_0) + A_1(\phi_0,K_{10})-(h_{1100} - \vartheta_{1000}\phi_1)\right), \\
    h_{0110} &= \BINV 0 \left(B(h_{0010},\phi_1) + A_1(\phi_1,K_{10})-(h_{0200} + h_{1010})\right).
\end{aligned}
\end{equation*}

\subsubsection{Coefficients \texorpdfstring{$(\vartheta_{0001},\gamma_5),h_{1001},
h_{0101}, h_{2001}, h_{1101}$}{(theta0001,gamma5),h1001,h0101,h2001,h1101}}

Define
\begin{equation*}
\begin{aligned}
    \hat h_{1001} &= \BINV 0 \left(B(h_{0001},\phi_0) + A_1(\phi_0,K_{01})\right), \\
    \hat h_{0101} &= \BINV 0 \left(\left(B(h_{0001},\phi_1) + A_1(\phi_1,K_{01})\right)-\left(\hat h_{1001} + \phi_1\right)\right). \\
\end{aligned}
\end{equation*}
Then the general solutions to the systems in \cref{btdde:eq:Assh1001,btdde:eq:Assh0101} are given by
\begin{equation*}
\begin{aligned}
h_{1001} &= \hat h_{1001} + \gamma_5 \phi_0, \\
h_{0101} &= \hat h_{0101} + \gamma_5 \phi_1 - \vartheta_{0001} \phi_1. \\
\end{aligned}
\end{equation*}

In order to determine $\gamma_5$ and $\vartheta_{0001}$, we consider the systems
corresponding to the $w_0^2\beta_2$ and $w_0w_1\beta_2$ terms in the homological
equation. These are given by
\begin{equation}
\label{btdde:eq:Assh2001_Assh1101}
\begin{aligned}
-\SUNSTAR Ajh_{2001} &= A_1(h_{2000},K_{01}) +B(h_{0001},h_{2000}) + 2 B(h_{1001},\phi_0) \\
				& \quad + B_1(\phi_0,\phi_0,K_{01}) + C(h_{0001},\phi_0,\phi_0) - 2 a j( h_{0101} - \vartheta_{0001} \phi_1), \\
-\SUNSTAR Ajh_{1101} &= A_1(h_{1100},K_{01}) + B(h_{0001},h_{1100}) + B(h_{0101},\phi_0) + B(h_{1001},\phi_1)  \\
				& \quad  + B_1(\phi_0,\phi_1,K_{01}) + C(h_{0001},\phi_0,\phi_1) - j\left[b h_{0101} + h_{1100} + h_{2001} \right. \\
				& \quad \left. - \vartheta_{1000} (h_{1001} + \phi_1 - \vartheta_{0001} \phi_0) 
                               - \vartheta_{0001} (h_{2000} + b \phi_1 - \vartheta_{1000} \phi_0)\right]  .
\end{aligned}
\end{equation}
The Fredholm solvability condition leads to the following system to be solved
\begin{equation}
\label{btdde:eq:gamma_5_theta0001}
\begin{pmatrix}
				 2a &  4a \\
				  b &   b 
\end{pmatrix}
\begin{pmatrix}
				\gamma_5 \\
				\vartheta_{0001}
\end{pmatrix}
=
\begin{pmatrix}
				\zeta_1 \\
				\zeta_2 
\end{pmatrix}.
\end{equation}
Here the right-hand side is given by
\begin{equation}
\label{btdde:eq:zeta1_zeta2}
\begin{aligned}
    \zeta_1 &=  2a\langle \psi_1, \hat h_{0101} \rangle - p_1 \left[   
			  A_1(h_{2000},K_{01}) + B(h_{0001},h_{2000}) \right.  \\
			& \left. \qquad + 2 B(\hat h_{1001},\phi_0)
			+ B_1(\phi_0,\phi_0,K_{01}) + C(h_{0001},\phi_0,\phi_0) \right], \\
    \zeta_2 &= \langle \psi_1, b\hat h_{0101} + h_{1100} - \vartheta_{1000}\hat h_{1001} \rangle - \vartheta_{1000}  \\
				& \qquad  - p_1 \left[ A_1(h_{1100},K_{01}) + 
				B(h_{0001},h_{1100}) + B(\hat h_{0101},\phi_0) \right. \\
				& \qquad \left. + B(\hat h_{1001},\phi_1) + B_1(\phi_0,\phi_1,K_{01})
				+ C(h_{0001},\phi_0,\phi_1) \right]  \\
                & \quad + 2a\langle \psi_0, \hat h_{0101} \rangle 
                - p_0 \left[ A_1(h_{2000},K_{01}) +
				B(h_{0001},h_{2000}) \right. \\
				& \qquad + \left. 2 B( \hat h_{1001},\phi_0) +
				B_1(\phi_0,\phi_0,K_{01}) + C(h_{0001},\phi_0,\phi_0) \right].
\end{aligned}
\end{equation}	
Notice that the matrix in \cref{btdde:eq:gamma_5_theta0001} is invertible by the
non-degeneracy condition. 

Now that the systems in \cref{btdde:eq:Assh2001_Assh1101} are
solvable, we obtain
\begin{equation}
\label{btdde:eq:h2001_h1101}
\begin{aligned}
h_{2001} &= \BINV 0 \left( \left(A_1(h_{2000},K_{01}) +B(h_{0001},h_{2000}) + 2 B(h_{1001},\phi_0) \right.\right. \\
				& \quad + \left.\left. B_1(\phi_0,\phi_0,K_{01}) + C(h_{0001},\phi_0,\phi_0)\right) - 2 a \left( h_{0101} - \vartheta_{0001} \phi_1\right)\right), \\
h_{1101} &= \BINV 0 \left( \left( A_1(h_{1100},K_{01}) + B(h_{0001},h_{1100}) + B(h_{0101},\phi_0) + B(h_{1001},\phi_1) \right.\right.  \\
				& \quad  \left. + B_1(\phi_0,\phi_1,K_{01}) + C(h_{0001},\phi_0,\phi_1) \right) - \left[b h_{0101} + h_{1100} + h_{2001} \right. \\
				& \quad \left.\left. - \vartheta_{1000} (h_{1001} + \phi_1 - \vartheta_{0001} \phi_0) 
                               - \vartheta_{0001} (h_{2000} + b \phi_1 - \vartheta_{1000} \phi_0)\right]\right).
\end{aligned}
\end{equation}

\subsubsection{Coefficients \texorpdfstring{$K_{11} \text{ and } h_{0011}$}{K11
				and h0011}}

Collecting the systems corresponding to the $\beta_1 \beta_2$ term in the
homological equation yields
\begin{equation}
\label{btdde:eq:Assh0011}
\begin{aligned}
-\SUNSTAR A jh_{0011} &= \left(J_1 K_{11} + A_1(h_{0001},K_{10}) + A_1(h_{0010},K_{01}) \right. \\
						& \quad +
                        \left. B(h_{0001},h_{0010}) + J_2(K_{01},K_{10})\right)\rss - j\left(h_{0101}-\vartheta_{0001}\phi_1\right).
\end{aligned}
\end{equation}
Using the identity 
\begin{equation*}
   p_1J_1K_{10}=1 
\end{equation*}
from the second system in \cref{btdde:eq:Assh0010}, combined with the solvability
condition, yields
\begin{equation}
\begin{aligned}
    K_{11}={}& \left[\langle \psi_1, \hat h_{0101} \rangle + \gamma_5 - 2\vartheta_{0001} 
                 - p_1\left(A_1(h_{0001},K_{10}) + A_1(h_{0010},K_{01}) \right. \right. \nonumber\\
             &\left.\left. + B(h_{0010},h_{0001}) + J_2(K_{10},K_{01}) \right)\vphantom{\left< \psi_1, \hat h_{0101} \right>} \right]K_{10}.
\end{aligned}
\end{equation}
It follows that 
\begin{equation}
\begin{aligned}
h_{0011}(\vartheta) &= \BINV 0 \left( J_1 K_{11} + A_1(h_{0001},K_{10}) + A_1(h_{0010},K_{01} \right.) \\
						& \left. \quad + B(h_{0001},h_{0010}) + J_2(K_{01},K_{10})-\left(h_{0101}-\vartheta_{0001}\phi_1\right)\right).
\end{aligned}
\end{equation}

\subsubsection{Coefficients
				\texorpdfstring{$K_{02},h_{0002},h_{1002},h_{0102}$}
				{h0002,K02,h1002,h0102}}

The systems corresponding to the $\beta_2^2, w_0\beta_2^2$, and $w_1\beta_2^2$,
terms in the homological equation yields
\begin{equation}
\label{btdde:eq:Ah0002_Ah1002_Ah0102}
\begin{aligned}
-\SUNSTAR A jh_{0002}={}&  \left(J_1K_{02} + 2A_1(h_{0001},K_{01})
							 + B(h_{0001},h_{0001}) + J_2(K_{01},K_{01})\right)\rss, \\
-\SUNSTAR A jh_{1002}={}& \left( 2A_1(h_{1001},K_{01}) + A_1(\phi_0,K_{02}) + A_2(\phi_0,K_{01},K_{01}) \right.
					 \\ & + B(\phi_0,h_{0002}) + 2B(h_{0001},h_{1001}) +
					 2B_1(\phi_0,h_{0001},K_{01}) \\
  & \left. + C(\phi_0,h_{0001},h_{0001}) \right)\rss, \\
-\SUNSTAR A jh_{0102}={}& \left( 2A_1(h_{0101},K_{01}) + A_1(\phi_1,K_{02}) + A_2(\phi_1,K_{01},K_{01}) \right. \\
  & + B(\phi_1,h_{0002}) + 2B(h_{0001},h_{0101}) + 2B_1(\phi_1,h_{0001},K_{01}) \\
  & \left. + C(\phi_1,h_{0001},h_{0001})\right)\rss -j \left[ 2h_{0101} + h_{1002} \right. \\
  & \left. -2\vartheta_{0001} (h_{1001} + \phi_1 - \vartheta_{0001}\phi_0) \right].
\end{aligned}
\end{equation}
The first system is solved similarly as \cref{btdde:eq:Assh0011}. In order to make the second and third systems
consistent, define
\begin{equation*}
\begin{aligned}
\hat K_{02}={}&-p_1\left[2A_1(h_{0001},K_{01}) + B(h_{0001},h_{0001})
				+J_2(K_{01},K_{01})\right]K_{10}, \\
                \hat h_{0002} ={}& \BINV 0 \left( J_1 \hat K_{02} + 2A_1(h_{0001},K_{01})
                + B(h_{0001},h_{0001}) + J_2(K_{01},K_{01})\right).
\end{aligned}
\end{equation*}
Then the general solutions to the first system in \cref{btdde:eq:Ah0002_Ah1002_Ah0102}
can be written as
\begin{equation*}
\begin{aligned}
				K_{02}={}& \hat K_{02} + \delta_3 K_{01}, \\
				h_{0002}={}& \hat h_{0002} +  \delta_3 h_{0001} + \gamma_6 \phi_0.
\end{aligned}
\end{equation*}
Substituting these two expressions into the last two systems of \cref{btdde:eq:Ah0002_Ah1002_Ah0102} and
using the solvability condition yields
\begin{equation*}
\begin{aligned}
\gamma_6 &= -\frac1{2a} p_1 \left[ 2A_1(h_{1001},K_{01}) +
				A_1(\phi_0,\hat K_{02}) + A_2(\phi_0,K_{01},K_{01}) \right. \\
	        & \quad + B(\phi_0,\hat h_{0002}) + 2B(h_{0001},h_{1001}) + 2B_1(\phi_0,h_{0001},K_{01}) \\
            & \left. \quad + C(\phi_0,h_{0001},h_{0001}) \right],  \\
\delta_3 &= 2 \langle \psi_1, \hat h_{0101} \rangle + 2 \gamma_5 - 2\vartheta_{0001} \langle \psi_1, \hat h_{1001} \rangle 
                - 4\vartheta_{0001} -p_1 \left[ 2A_1(h_{0101},K_{01}) \right. \\ 
            & \quad + A_1(\phi_1,\hat K_{02}) + A_2(\phi_1,K_{01},K_{01}) + B(\phi_1,\hat h_{0002}) \\
            & \quad \left. + 2B(h_{0001},h_{0101}) + 2B_1(\phi_1,h_{0001},K_{01}) +  C(\phi_1,h_{0001},h_{0001}) \right] \\
            & \quad - p_0 \left[ 2A_1(h_{1001},K_{01}) + A_1(\phi_0,\hat K_{02}) +
            A_2(\phi_0,K_{01},K_{01}) \right. \\ 
            & \quad + B(\phi_0,\hat h_{0002}) + 2B(h_{0001},h_{1001}) + 2B_1(\phi_0,h_{0001},K_{01}) \\
            & \left. \quad + C(\phi_0,h_{0001},h_{0001}) \right] - \gamma_6 b,
\end{aligned}
\end{equation*}
where $\langle \psi_1, \hat h_{1001} \rangle = p_0 \left( B(h_{0001},\phi_0) + A_1(\phi_0,K_{01}) \right)$.

Now that the last two systems in \cref{btdde:eq:Ah0002_Ah1002_Ah0102} are consistent,
we obtain
\begin{equation*}
\begin{aligned}
h_{1002} ={}& \BINV 0 \left( 2A_1(h_{1001},K_{01}) + A_1(\phi_0,K_{02}) + A_2(\phi_0,K_{01},K_{01}) \right.
					 \\ & + B(\phi_0,h_{0002}) + 2B(h_{0001},h_{1001}) +
					 2B_1(\phi_0,h_{0001},K_{01}) \\
  & \left. + C(\phi_0,h_{0001},h_{0001}) \right), \\
h_{0102} ={}& \BINV 0 \left(\left( 2A_1(h_{0101},K_{01}) + A_1(\phi_1,K_{02}) + A_2(\phi_1,K_{01},K_{01}) \right.\right. \\
  & + B(\phi_1,h_{0002}) + 2B(h_{0001},h_{0101}) + 2B_1(\phi_1,h_{0001},K_{01}) \\
  & \left. + C(\phi_1,h_{0001},h_{0001})\right) - \left[ 2h_{0101} + h_{1002} \right. \\
  & \left.\left. -2\vartheta_{0001} (h_{1001} + \phi_1 - \vartheta_{0001}\phi_0) \right]\right).
\end{aligned}
\end{equation*}

\subsubsection{Coefficients \texorpdfstring{$K_{03} \text{ and } h_{0003}$}{K03
				and h0003}}
\label{btdde:subsubsection:K03_h0003}

Collecting the systems corresponding to the $\beta_2^3$ term in the
homological equation yields
\begin{equation*}
\begin{aligned}
-\SUNSTAR A jh_{0003} ={}& \left( J_1 K_{03} + A_1(h_{0001},K_{02}) + A_1(h_{0002},K_{01})
				+ 2 (A_1(h_{0001},K_{02}) \right. \\
				& + A_1(h_{0002},K_{01}) + 3 B(h_{0001},h_{0002}) + 3 J_2(K_{01},K_{02}) \\
				& + 3 A_2(h_{0001},K_{01},K_{01}) + 3 B_1(h_{0001},h_{0001},K_{01}) \\
				& \left. + C(h_{0001},h_{0001},h_{0001}) + J_3(K_{01},K_{01},K_{01}) \right)\rss.
\end{aligned}
\end{equation*}
This equation is solved similarly as equation \cref{btdde:eq:Assh0011}. We obtain
\begin{equation*}
\begin{aligned}
K_{03}={}& -p_1 \left[ A_1(h_{0001},K_{02}) + A_1(h_{0002},K_{01})
				+ 2 A_1(h_{0001},K_{02}) \right. \\
				& + 2 A_1(h_{0002},K_{01}) + 3 B(h_{0001},h_{0002}) + 3 J_2(K_{01},K_{02}) \\
				& + 3 A_2(h_{0001},K_{01},K_{01}) + 3 B_1(h_{0001},h_{0001},K_{01}) \\
				& \left. + C(h_{0001},h_{0001},h_{0001}) +
				J_3(K_{01},K_{01},K_{01}) \right] K_{10}, \\
h_{0003} ={}& \BINV 0 \left( J_1 K_{03} + A_1(h_{0001},K_{02}) +
				A_1(h_{0002},K_{01}) + 2 A_1(h_{0001},K_{02}) \right. \\
				& + 2 A_1(h_{0002},K_{01}) + 3 B(h_{0001},h_{0002}) + 3 J_2(K_{01},K_{02}) \\
				& + 3 A_2(h_{0001},K_{01},K_{01}) + 3 B_1(h_{0001},h_{0001},K_{01}) \\
				& \left. + C(h_{0001},h_{0001},h_{0001}) +
				J_3(K_{01},K_{01},K_{01})\right).
\end{aligned}
\end{equation*}

\subsubsection{Homoclinic asymptotics}
\label{btdde:sec:generic_bt_homoclinic_asymptotics}
The third-order homoclinic asymptotics for the homoclinic orbits emanating from \cref{btdde:eq:normal_form_orbital}
have been derived in \cite{Bosschaert@Interplay} and are given by
\begin{equation}
\label{btdde:eq:third_order_predictor_LP_tau}
\begin{cases}
\begin{aligned}
w_0(\eta)  &= \frac{a}{b^2} 
\tilde {u}\left(\tanh\left(\xi\left(\frac{a}{b}\epsilon\eta\right)\right)\right) \epsilon^2, \\
w_1(\eta)  &= \frac{a^2}{b^3}
\tilde {v}\left(\tanh\left(\xi\left(\frac{a}{b}\epsilon\eta\right)\right)\right) \epsilon^3, \\
\beta_1    &= -4 \frac{a^3}{b^4}\epsilon^4, \\
\beta_2    &= \frac{a}{b}\epsilon^2\tau,
\end{aligned}
\end{cases}
\end{equation}
where
\begin{equation}
\begin{aligned}
\label{btdde:eq:tau_utilde_vtilde_xi}
\tau ={}& \frac{10}{7} + \frac{288}{2401} \epsilon^2 + \mathcal{O}(\epsilon^4), \\
\tilde {u}(\zeta) ={}& 2 - \left(1-\zeta^2\right) \left(6 + \frac{18}{49}\epsilon^2 \right) + \mathcal{O}(\epsilon^4), \\
\tilde  v(\zeta) ={}&  = -\left[ -12 + \frac{72}{7} \zeta \epsilon - \left( \frac{90}{49} + \frac{162 }{49} \zeta^2 \right)\epsilon^2 \right.
  	 + \left. \left( \frac{3888}{2401} \zeta - \frac{216}{343}\zeta^3 \right) \epsilon^3 \right]
(1-\zeta^2) \zeta + \mathcal{O}(\epsilon^4), \\
\xi(s) ={}& s - \frac67\log(\cosh(s))\epsilon + \left( -\frac{18 s}{49}+\frac{45 \tanh (s)}{98}+\frac{36}{49} \tanh (s) \log (\cosh(s))\right)\epsilon^2 \\
          &+ \left( -\frac{117}{343} + \frac{3 \sech^2(s)}{4802}\left(-504 \log^2(\cosh(s))-276 \cosh (2 s) \log (\cosh (s)) \right. \right. \\
        {}& \left. \left. \phantom{\frac12} + 102 \log (\cosh (s)) + 252 s \sinh (2 s) \right) \right) \epsilon^3 + \mathcal{O}(\epsilon^4).
\end{aligned}
\end{equation}
To translate the asymptotics to a generic codimension two Bogdanov--Takens point
in \cref{btdde:eq:classicalDDE}, one should substitute the expressions in
\cref{btdde:eq:third_order_predictor_LP_tau} into \cref{btdde:eq:h_expansion-BT,btdde:eq:K_expansion-BT}.
Furthermore, to accurately approximate the profiles of the homoclinic orbit, one needs to numerically solve
the equation 
\begin{equation*}
    t(\eta) - t = 0,
\end{equation*}
for $\eta$, where
\begin{equation*}
\label{btdde:eq:tOfTau}
\begin{aligned}
    t(\eta) ={}& \eta \left( 1 + \vartheta_{0001}\frac{a}{b}\epsilon^2\left(\frac{10}{7}+ \frac{288}{2401} \epsilon^2 + \mathcal{O}(\epsilon^4) \right) \right) + 
				 \vartheta_{1000} \frac1{b} \epsilon \left( \xi(\frac{a}{b}\eta\epsilon) -6 \tanh (\xi(\frac{a}{b}\eta\epsilon) ) + \right. \\
               &\left( \frac{18 \sech^2(\xi(\frac{a}{b}\eta\epsilon) )}{7}+\frac{12}{7} \log (\cosh (\xi(\frac{a}{b}\eta\epsilon) )) \right) \epsilon \\ 
               & +\frac{9}{49} \left(4 \xi(\frac{a}{b}\eta\epsilon) -9 \tanh (\xi(\frac{a}{b}\eta\epsilon) )+5 \tanh (\xi(\frac{a}{b}\eta\epsilon) ) \sech^2(\xi(\frac{a}{b}\eta\epsilon))\right) \epsilon^2 \\ 
               & \left. +\frac{18 \left(-21 \sech^4(\xi(\frac{a}{b}\eta\epsilon) )+47 \sech^2(\xi(\frac{a}{b}\eta\epsilon) )+8 \log (\cosh (\xi(\frac{a}{b}\eta\epsilon) ))\right)}{2401} \epsilon^3 \right) + \mathcal{O}(\epsilon^4).
\end{aligned}
\end{equation*}

\subsubsection{Codimension-one equilibria bifurcations}
To approximate the fold and Hopf curves and their corresponding equilibria in
\cref{btdde:eq:normal_form_orbital}, one should substitute the expressions for $\beta$
and the equilibrium coordinates into the expansions \cref{btdde:eq:h_expansion-BT} and
\cref{btdde:eq:K_expansion-BT}. It follows that the fold curve is approximated by 
\begin{equation*}
\begin{cases}
\begin{aligned}
    x &= \mathcal H\left(0,0,0,0\right), \\
    \alpha &= K(0,\epsilon),
\end{aligned}
\end{cases}
\end{equation*}
for $|\epsilon|$ small, and the Hopf curve by
\begin{equation*}
\begin{cases}
\begin{aligned}
    \omega &= \epsilon, \\
    x &= \mathcal H\left(-\frac{\epsilon^2}{2a},0,-\frac{\epsilon^4}{4a)},\frac{b\epsilon^2}{2a}\right), \\
    \alpha &= K\left(-\frac{\epsilon^4}{4a},\frac{b\epsilon^2}{2a}\right),
\end{aligned}
\end{cases}
\end{equation*}
for $\epsilon > 0$ small.

\subsection{Transcritical Bogdanov--Takens bifurcation}
\label{btdde:sec:transcritical-Bogdanov-Takens}

\begin{table}
\begin{center}
\begingroup
\renewcommand*{\arraystretch}{1.4}
\begin{tabular}{rl}
\hline
order in $\epsilon$ & affected terms \\
\hline%
\(\epsilon^{-2}\) & $w_0$ \\
\(\epsilon^{-1}\) & $w_1$ \\
\(\epsilon^0\)    & $w_0^2$, $w_0 \beta_1$, $w_0 \beta_2$ \\
\(\epsilon^1\)    & $w_1\beta_1$, $w_1\beta_2$, $w_0w_1$ \\
\(\epsilon^2\)    & $w_0\beta_1^{2}$, $w_0\beta_1\beta_2$, $w_0\beta_2^{2}$, $w_0^{2}\beta_1$, $w_0^{2}\beta_2$, $w_0^{3}$, $w_1^{2}$ \\
\(\epsilon^3\)    & $w_1\beta_1^{2}$, $w_1\beta_1\beta_2$, $w_1\beta_2^{2}$, $w_0 w_1\beta_1$, $w_0w_1\beta_2$,  $w_0^{2}w_1$

\\[0.2cm]
\hline
\end{tabular}
\endgroup
\caption{
Terms in the reduced system restricted to the parameter-dependent center manifold of the
codimension two Bogdanov--Takens point in \cref{btdde:eq:discreteDDEs} that affect the third-order homoclinic predictor.} 
\label{btdde:table:terms_affecting_predicor}
\end{center}
\end{table}
In this section, we consider the situation in which the origin remains an equilibrium
under variation of the parameters.

Following~\cite{Broer1991}, it is not difficult to show that the $C^\infty$-equivalent normal form on the parameter-dependent
center manifold takes the form
\begin{equation}
\label{btdde:eq:normal_form_orbital_tbt}
\begin{cases}
\begin{aligned}
    \dot w_0 & =  w_1, \\
    \dot w_1 & =  \beta_1 w_0 + \beta_2 w_1 + aw_0^2 + b w_0 w_1 + w_0^2 w_1 h(w_0,\beta) + w_1^2 Q(w_0,w_1,\beta),
\end{aligned}
\end{cases}
\end{equation}
where $h$ is $C^\infty$ and $Q$ is $N$-flat for an a priori given $N$. Here,
the dot represents the derivative with respect to the new time $\eta$ of
$w_i(\eta)(i = 0,1)$. 

In \cref{btdde:table:terms_affecting_predicor}, we have listed the terms in the
parameter-dependent orbital normal form
\cref{btdde:eq:normal_form_orbital_tbt} affecting the third-order homoclinic
asymptotics in \cref{btdde:eq:second_order_nonlinear_oscillator}. Therefore, by
\cite[Theorem 1]{Bosschaert@Interplay}, it is both necessary and sufficient in
order to translate the third-order homoclinic predictor to the system
\cref{btdde:eq:classicalDDE} to expand the functions $R \colon X \times \mathbb R^2 \rightarrow \SUNSTAR{X}$, 
$\mathcal{H} \colon \mathbb R^2 \times \mathbb R^2 \rightarrow X$, $K \colon
\mathbb R^2 \rightarrow \mathbb R^2$, and $\vartheta \colon \mathbb R^2
\rightarrow \mathbb R$ defined in \cref{btdde:sec:Center_manifold_reduction} as
follows
\begin{align}
R(u,\alpha) =& \label{btdde:eq:R_expansion_TBT}
    \left(\frac{1}{2} B(u,u) + A_1(u,\alpha) + \frac{1}{6} C(u,u,u) + \frac{1}{2} B_1(u,u,\alpha) \right. \\
    &\phantom{\Bigl(}+ \left. \frac{1}{2} A_2(u,\alpha,\alpha) 
    + \mathcal O(\|(u\|,\|\alpha)\|^4) \right) \rss , \nonumber \\
        \mathcal H(w,\beta) = {}& \label{btdde:eq:h_expansion-TBT}
        \phi_0 w_0 + \phi_1 w_1 + \frac{1}{2}h_{2000}w_0^2 + h_{1100}w_0w_1 + \frac{1}{2}h_{0200}w_1^2 + h_{1010}w_0\beta_1 \\
    & +h_{1001}w_0\beta_2 + h_{0110}w_1\beta_1 + h_{0101}w_1\beta_2 + \frac{1}{2}h_{0102}w_1\beta_2^2 \nonumber \\
    & +h_{0111}w_1\beta_1\beta_2 + \frac{1}{2}h_{0120}w_1\beta_1^2 + \frac{1}{2}h_{1002}w_0\beta_2^2 + h_{1011}w_0\beta_1\beta_2 \nonumber \\
    & +\frac{1}{2}h_{1020}w_0\beta_1^2 + h_{1101}w_0w_1\beta_2 + h_{1110}w_0w_1\beta_1\nonumber \\
    & +\frac{1}{2}h_{2001}w_0^2\beta_2 + \frac{1}{2}h_{2010}w_0^2\beta_1 + \frac{1}{2}h_{2100}w_0^2w_1 + \frac{1}{6}h_{3000}w_0^3 \nonumber \\
    & +\mathcal{O}(|\beta_2w_1^2| + |\beta_1w_1^2| + |w_1^3| + |w_0w_1^2|) + \mathcal{O}(\left\Vert (w,\beta)\right\Vert ^4), \nonumber\\
K(\beta) =& K_{10}\beta_1 + K_{01}\beta_2 + 
            \frac{1}{2}K_{20}\beta_1^2 + K_{11}\beta_1\beta_2 + \frac{1}{2}K_{02}\beta_2^2
             +\mathcal{O}(\left\Vert \beta\right\Vert ^3).\label{btdde:eq:K_expansion_TBT} \\
\vartheta(w,\beta) = {}& \label{btdde:eq:theta_expansion_TBT}
        1 + \vartheta_{1000}w_0 + \vartheta_{0010} \beta_1 + \vartheta_{0001} \beta_2 
        + \mathcal O\left(|w_1| + \|(w,\beta)\|^2\right).
\end{align}
The coefficients of the time-reparametrization $\vartheta$ have been determined
such that the linear systems resulting from the homological equations are
solvable, see \cite[Remark 2.2]{Bosschaert@Interplay}. Note that, since the
steady-state remains fixed under variations of parameters, we leave
out all coefficients in the expansion of $\mathcal{H}$ which solely depend on
the parameters.

Also note that the critical coefficients, i.e. coefficients not depending on any
parameters, have already been derived in \cref{btdde:sec:critical_coefficients}.
Therefore, we will start directly with the parameter-dependent systems.

\subsubsection{Quadratic terms}

Collecting the coefficients of the linear and quadratic terms in the
homological equation lead to the systems
\begin{equation}
\begin{aligned}
\label{btdde:eq:second_order_systems_tbt}
-\SUNSTAR A jh_{1010} & = A_1(\phi_0,K_{10})-j\phi_1, \\
-\SUNSTAR A jh_{1001} & = A_1(\phi_0,K_{01}), \\
-\SUNSTAR A jh_{0110} & = A_1(\phi_1,K_{10}) - j(h_{1010} - \vartheta_{0010}\phi_0), \\
-\SUNSTAR A jh_{0101} & = A_1(\phi_1,K_{01}) - j(h_{1001} - \vartheta_{0001}\phi_0)  - \phi_1.
\end{aligned}
\end{equation}
The solvability condition implies that
\begin{equation}
\begin{aligned}
\label{btdde:eq:second_order_solvability_condition_tbt}
0 & = 1-p_1 A_1(\phi_0,K_{10}), \\
0 & = -p_1 A(\phi_0,K_{01}), \\
0 & = -p_1 A_1(\phi_1,K_{10}) + \left< \psi_1,h_{1010}\right>, \\
0 & = -p_1 A_1(\phi_1,K_{01}) + \left< \psi_1,h_{1001}\right> + 1.
\end{aligned}
\end{equation}
Pairing the first two systems in \cref{btdde:eq:second_order_systems_tbt}
with $\psi_0$ yields
\begin{align*}
\left< \psi_1,h_{1010} \right>  & = -p_0A_1(\phi_0,K_{10}),\\
\left< \psi_1,h_{1001} \right>  & = -p_0A_1(q_0,K_{01}).
\end{align*}
Using these identities in the last two equations of
\cref{btdde:eq:second_order_solvability_condition_tbt} yields
\begin{equation}
\label{btdde:eq:tbt_k_system}
\begin{cases}
0 & = -p_1A_1(\phi_1,K_{10})-p_0A_1(\phi_0,K_{10}),\\
0 & = -p_1A_1(\phi_1,K_{01})-p_0A_1(\phi_0,K_{01}) + 1.
\end{cases}
\end{equation}

Now, let
\begin{align*}
K_{10} & = \delta_1e_1 + \delta_2e_2,\\
K_{01} & = \delta_3e_1 + \delta_4e_2,
\end{align*}
where $e_1$ and $e_2$ are the standard basis vectors in $\mathbb R^2$ and
$\delta_i(i = 1\dots4)$ are constants to be determined. Substituting the above
expressions for $K_{10}$ and $K_{01}$ into equations
\cref{btdde:eq:tbt_k_system}, and the first two equations of
\cref{btdde:eq:second_order_solvability_condition_tbt}, we obtain, by using the
linearity of the multilinear forms, that
\begin{equation}
\begin{cases}
0 & = \delta_1\left(p_1A_1(\phi_1,e_1) + p_0A_1(\phi_0,e_1)\right) + \delta_2\left(p_0A_1(\phi_0,e_2) + p_1A_1(\phi_1,e_2)\right),\\
1 & = \delta_3\left(p_1A_1(\phi_1,e_1) + p_0A_1(q_0,e_1)\right) + \delta_4\left(p_0A_1(\phi_1,e_2) + p_1A_1(q_0,e_2)\right).
\end{cases}
\end{equation}
and
\begin{equation}
\begin{cases}
1 & = \delta_1p_1A_1(\phi_0,e_1) + \delta_2p_1A_1(\phi_0,e_2),\\
0 & = \delta_3p_1(A(\phi_0,e_1) + \delta_4p_1(A(\phi_0,e_2),
\end{cases}
\end{equation}
respectively. Thus, the constants $\delta_i(i = 1\dots4)$ are obtained
by solving the system
\begin{multline*}
\left(\begin{array}{cc}
p_1A_1(\phi_0,e_1) & p_1A_1(\phi_0,e_2)\\
p_1A_1(\phi_1,e_1) + p_0A_1(\phi_0,e_1) & p_0A_1(\phi_0,e_2) + p_1A_1(\phi_1,e_2)
\end{array}\right)\\
\left(\left[\begin{array}{cc}
K_{10} & K_{01}\end{array}\right]\right) = \left(\begin{array}{cc}
1 & 0\\
0 & 1
\end{array}\right).
\end{multline*}

Using \cref{btdde:corollary:sol_double_zero_eig_polynomial} we now define the coefficients
\begin{align}
    \hat h_{1010} & = \BINV 0 \left(A_1(\phi_0,K_{10}\right),\phi_1)\\
    \hat h_{1001} & = \BINV 0 \left(A_1(\phi_0,K_{01}\right),\\
    \hat h_{0110} & = \BINV 0 \left(A_1(\phi_1,K_{10}),h_{1010}\right),\\
    \hat h_{0101} & = \BINV 0 \left(A_1(\phi_1,K_{01}),h_{1001} + \phi_1\right).
\end{align}
Then it follows that solutions to \cref{btdde:eq:second_order_systems_tbt} are given by
\begin{equation}
\label{btdde:eq:seconder_order_coefficients_phase_space_tbt}
\begin{aligned}
    h_{1010} &= \hat h_{1010} + \gamma_3 \phi_0, \\
    h_{1001} &= \hat h_{1001} + \gamma_4 \phi_0, \\
    h_{0110} &= \hat h_{0110} + \gamma_3 \phi_1 - \vartheta_{0010} \phi_1, \\
    h_{0101} &= \hat h_{0101} + \gamma_4 \phi_1 - \vartheta_{0001} \phi_1,
\end{aligned}
\end{equation}
where $\gamma_3$ and $\gamma_4$ are constants to be determined. Note that we
still have freedom in the coefficients $h_{0110}$ and $h_{0101}$ by an addition of
a multiple of the eigenfunction $\phi_0$. However, for our purposes, we will 
not need to include it here.

\subsubsection{Cubic terms}
By collecting the cubic terms in the homological equation, we obtain the following systems
\begin{equation}
\label{btdde:eq:third_order_systems_tbt}
\begin{aligned}
-\SUNSTAR A jh_{2010} ={}& \left[A_1(h_{2000},K_{10})+ 2 B(h_{1010},\phi_0) + B_1(\phi_0,\phi_0,K_{10})\right]\rss \\
                         & -2j\left(a h_{0110} + h_{1100} - \vartheta_{1000} \phi_1 - a \vartheta_{0010} \phi_1   \right), \\
-\SUNSTAR A jh_{1110} ={}& \left[A_1(h_{1100},K_{10}) + B(h_{0110},\phi_0) + B(h_{1010},\phi_1) + B_1(\phi_0,\phi_1,K_{10})\right]\rss \\
                         & -j\left(b h_{0110} + h_{0200} + h_{2010} - \vartheta_{1000} (h_{1010} - \vartheta_{0010} \phi_0) \right. \\
                         & \left. - \vartheta_{0010} (b \phi_1 - \vartheta_{1000} \phi_0 + h_{2000})  \right), \\
-\SUNSTAR A jh_{2001} ={}& \left[A_1(h_{2000},K_{01}) + 2 B(h_{1001},\phi_0) + B_1(\phi_0,\phi_0,K_{01})\right]\rss \\
                         & - 2 a j(h_{0101} - \vartheta_{0001} \phi_1), \\
-\SUNSTAR A jh_{1101} ={}& \left[A_1(h_{1100},K_{01}) + B(h_{0101},\phi_0) + B(h_{1001},\phi_1) + B_1(\phi_0,\phi_1,K_{01})\right]\rss \\
                        & -j\left(b h_{0101} + h_{1100} + h_{2001} - \vartheta_{1000} (h_{1001} + \phi_1 - \vartheta_{0001} \phi_0) \right. \\
                        & \left. - \vartheta_{0001} (b \phi_1 - \vartheta_{1000} \phi_0 + h_{2000})  \right), \\
-\SUNSTAR A jh_{1002} ={}& \left[2 A_1(h_{1001},K_{01}) + A_1(\phi_0,K_{02}) + A_2(\phi_0,K_{01},K_{01})\right]\rss, \\
-\SUNSTAR A jh_{0102} ={}& \left[2 A_1(h_{0101},K_{01}) + A_1(\phi_1,K_{02}) + A_2(\phi_1,K_{01},K_{01})\right]\rss \\
                         & -j\left(2 h_{0101} + h_{1002} - 2 \vartheta_{0001} (h_{1001} + \phi_1 - \vartheta_{0001} \phi_0) \right), \\
-\SUNSTAR A jh_{1011} ={}& \left[A_1(h_{1001},K_{10}) + A_1(h_{1010},K_{01}) + A_1(\phi_0,K_{11}) + A_2(\phi_0,K_{01},K_{10})\right]\rss \\
                         & -j\left(h_{0101} - \vartheta_{0001} \phi_1\right), \\
-\SUNSTAR A jh_{0111} ={}& \left[A_1(h_{0101},K_{10}) + A_1(h_{0110},K_{01}) + A_1(\phi_1,K_{11}) + A_2(\phi_1,K_{01},K_{10})\right]\rss, \\
                         & -j\left(h_{0110} + h_{1011} - \vartheta_{0010} (h_{1001} + \phi_1 - \vartheta_{0001} \phi_0) - \vartheta_{0001} (h_{1010} - \vartheta_{0010} \phi_0) \right), \\
-\SUNSTAR A jh_{1020} ={}& \left[2 A_1(h_{1010},K_{10}) + A_1(\phi_0,K_{20}) + A_2(\phi_0,K_{10},K_{10})\right]\rss \\
                         & - 2 j\left(h_{0110} - \vartheta_{0010} \phi_1\right), \\
-\SUNSTAR A jh_{0120} ={}& \left[2 A_1(h_{0110},K_{10}) + A_1(\phi_1,K_{20}) + A_2(\phi_1,K_{10},K_{10})\right]\rss \\
                         & -j\left(h_{1020} - 2 \vartheta_{0010} (h_{1010} - \vartheta_{0010} \phi_0)\right). \\
\end{aligned}
\end{equation}
To solve the above systems, we write the second order parameter coefficients in $K$ with respect to a basis in $K_{10}$ and $K_{01}$ as follows
\begin{equation}
\label{btdde:eq:second_order_parameter_coefficients_tbt}
\begin{aligned}
    K_{02} ={}& \gamma_5 K_{10} + \gamma_6 K_{01}, \\
    K_{11} ={}& \gamma_7 K_{10} + \gamma_8 K_{01}, \\
    K_{20} ={}& \gamma_9 K_{10} + \gamma_{10} K_{01}.
\end{aligned}
\end{equation}
By applying the Fredholm alternative to the systems in
\cref{btdde:eq:third_order_systems_tbt}, we obtain ten equations to be solved for (also the
ten) variables $\vartheta_{0010}$, $\vartheta_{0001}$, $\gamma_3$, $\gamma_4$,
$\gamma_5$, $\gamma_6$, $\gamma_7$, $\gamma_8$, $\gamma_9$, $\gamma_{10}$.

Pairing the first two equations in \cref{btdde:eq:third_order_systems_tbt} with $\psi_1$ and using 
\cref{btdde:eq:seconder_order_coefficients_phase_space_tbt} yields
\begin{equation}
\label{btdde:eq:pairing_first_two_equations}
\begin{aligned}
0 ={}& p_1\left[A_1(h_{2000},K_{10})+ 2 B(\hat h_{1010},\phi_0) + B_1(\phi_0,\phi_0,K_{10})\right] \\
                         & -2\langle\psi_1,a \hat h_{0110} + h_{1100}\rangle + 2\vartheta_{1000}
                            + 2a\gamma_3 + 4a \vartheta_{0010}, \\
0 ={}& p_1 \left[A_1(h_{1100},K_{10}) + B(h_{0110},\phi_0) + B(h_{1010},\phi_1) + B_1(\phi_0,\phi_1,K_{10})\right] \\
                         & -\left<\psi_1,ah_{0110} + h_{0200} + h_{2010} - \vartheta_{1000} (b h_{1010} - \vartheta_{0010} \phi_0) \right. \\
                         & \left. - \vartheta_{0010} (b \phi_1 - \vartheta_{1000} \phi_0 + h_{2000})  \right>. \\
\end{aligned}
\end{equation}
Since $h_{2010}$ is still to be determined we use that
\[
    \left< \psi_1, h_{2010} \right> 
    = \left< j h_{2010}, A^\odot \psi_0 \right> 
    = \left< \SUNSTAR A j h_{2010}, \psi_0 \right>,
\]
Furthermore, from \cref{btdde:eq:Assh1100} we obtain that
\begin{align}
    \label{btdde:eq:psi1_Assh1000}
    \left<\psi_1, b \phi_1 - \vartheta_{1000} \phi_0 + h_{2000}  \right> &{}= p_1 B(\phi_0,\phi_1).
\end{align}
Thus, the second equation in \cref{btdde:eq:pairing_first_two_equations} becomes
\begin{align*}
0 ={}& p_1 \left[A_1(h_{1100},K_{10}) + B(h_{0110},\phi_0) + B(h_{1010},\phi_1) + B_1(\phi_0,\phi_1,K_{10})\right] \\
     & + p_0 \left[A_1(h_{2000},K_{10})+ 2 B(h_{1010},\phi_0) + B_1(\phi_0,\phi_0,K_{10})\right] \\
     & -2\left<\psi_0, a h_{0110} + h_{1100}\right> -\left<\psi_1,bh_{0110} + h_{0200} - \vartheta_{1000} (h_{1010} - \vartheta_{0010} \phi_0)\right> \\
     & + \vartheta_{0010} p_1 B(\phi_0,\phi_1).
\end{align*}
Substituting \cref{btdde:eq:second_order_parameter_coefficients_tbt} into the above expression, and using the lineariry of the multilinear forms yields
\begin{align*}
0 ={}& p_1 \left[A_1(h_{1100},K_{10}) + B(\hat h_{0110},\phi_0) + B(\hat h_{1010},\phi_1) + B_1(\phi_0,\phi_1,K_{10})\right] \\
     & + p_0 \left[A_1(h_{2000},K_{10})+ 2 B(\hat h_{1010},\phi_0) + B_1(\phi_0,\phi_0,K_{10})\right] \\
     & -2\langle\psi_0, a \hat h_{0110} + h_{1100}\rangle -\langle\psi_1, b \hat h_{0110} + h_{0200} - \vartheta_{1000} \hat h_{1010}\rangle \\
     & + b \gamma_3 + b\vartheta_{0010}.
\end{align*}
It follows that the constants $\gamma_3$ and $\vartheta_{0010}$ are give by
\begin{equation}
\label{btdde:eq:gamma_3_theta0010_tbt}
\begin{pmatrix}
				 2a &  4a \\
				  b &   b 
\end{pmatrix}
\begin{pmatrix}
				\gamma_3 \\
				\vartheta_{0010}
\end{pmatrix}
=
\begin{pmatrix}
				\zeta_1 \\
				\zeta_2 
\end{pmatrix},
\end{equation}
where the right-hand side is given by
\begin{equation}
\label{btdde:eq:zeta1_zeta2_tbt}
\begin{aligned}
\zeta_1 ={}& -p_1\left[A_1(h_{2000},K_{10})+ 2 B(\hat h_{1010},\phi_0) + B_1(\phi_0,\phi_0,K_{10})\right] \\
           & +2\langle\psi_1,a \hat h_{0110} + h_{1100}\rangle - 2\vartheta_{1000}, \\
\zeta_2 ={}& -p_1 \left[A_1(h_{1100},K_{10}) + B(\hat h_{0110},\phi_0) + B(\hat h_{1010},\phi_1) + B_1(\phi_0,\phi_1,K_{10})\right] \\
           & - p_0 \left[A_1(h_{2000},K_{10})+ 2 B(\hat h_{1010},\phi_0) + B_1(\phi_0,\phi_0,K_{10})\right] \\
           & +2\langle\psi_0, a \hat h_{0110} + h_{1100}\rangle +\langle\psi_1, b \hat h_{0110} + h_{0200} - \vartheta_{1000} \hat h_{1010}\rangle.
\end{aligned}
\end{equation}
Notice here that the matrix given in the left-hand side in
\cref{btdde:eq:gamma_3_theta0010_tbt} is equivalent to the matrix in
\cref{btdde:eq:gamma_5_theta0001} when performing the center manifold reduction near
the generic Bogdanov--Takens bifurcation. 

The third and fourth systems in \cref{btdde:eq:third_order_systems_tbt} lead to a
similar system. Indeed, by pairing these systems with the adjoint eigenfunction
$\psi_1$ and using \cref{btdde:eq:seconder_order_coefficients_phase_space_tbt} leads to the equations
\begin{equation}
\label{btdde:eq:pairing_second_two_equations}
\begin{aligned}
    0 ={}& p_1\left[A_1(h_{2000},K_{01}) + 2 B(\hat h_{1001},\phi_0) + B_1(\phi_0,\phi_0,K_{01})\right] 
    \process{%
        \\ &
    }
 - 2a\langle \psi_1, \hat h_{0101} \rangle + 2a \gamma_4 + 4 a \vartheta_{0001}, \\
0 ={}& p_1\left[A_1(h_{1100},K_{01}) + B(h_{0101},\phi_0) + B(h_{1001},\phi_1) + B_1(\phi_0,\phi_1,K_{01})\right] \\
                        & -\left< \psi_1, b h_{0101} + h_{1100} + h_{2001} - \vartheta_{1000} (h_{1001} + \phi_1 - \vartheta_{0001} \phi_0) \right. \\
                        & \left. - \vartheta_{0001} (b \phi_1 - \vartheta_{1000} \phi_0 + h_{2000}) \right>. \\
\end{aligned}
\end{equation}
By using \cref{btdde:eq:psi1_Assh1000,btdde:eq:seconder_order_coefficients_phase_space_tbt} the second equation becomes
\begin{align*}
0 ={}& p_1\left[A_1(h_{1100},K_{01}) + B(\hat h_{0101},\phi_0) + B(\hat h_{1001},\phi_1) + B_1(\phi_0,\phi_1,K_{01})\right] + \\
     & p_0 \left[A_1(h_{2000},K_{01}) + 2 B(\hat h_{1001},\phi_0) + B_1(\phi_0,\phi_0,K_{01})\right] - 2 a \langle \psi_0, \hat h_{0101} \rangle \\
                        & -\langle \psi_1, b \hat h_{0101} + h_{1100} - \vartheta_{1000} \hat h_{1001} \rangle  + \vartheta_{1000} 
                          + b \gamma_4 + b \vartheta_{0001}.
\end{align*}
It follows that the constants $\gamma_4$ and $\vartheta_{0001}$ are give by
\begin{equation}
\label{btdde:eq:gamma_4_theta0001_tbt}
\begin{pmatrix}
    2a &  4a \\
     b &   b 
\end{pmatrix}
\begin{pmatrix}
    \gamma_4 \\
    \vartheta_{0001}
\end{pmatrix}
=
\begin{pmatrix}
    \zeta_3 \\
    \zeta_4 
\end{pmatrix},
\end{equation}
where the right-hand side is given by
\begin{equation}
    \label{btdde:eq:zeta3_zeta4_tbt}
    \begin{aligned}
        \zeta_3 ={}& -p_1\left[A_1(h_{2000},K_{01}) + 2 B(\hat h_{1001},\phi_0) + B_1(\phi_0,\phi_0,K_{01})\right] + 2a\langle \psi_1, \hat h_{0101} \rangle\\
        \zeta_4 ={}& -p_1\left[A_1(h_{1100},K_{01}) + B(\hat h_{0101},\phi_0) + B(\hat h_{1001},\phi_1) + B_1(\phi_0,\phi_1,K_{01})\right] + \\
                   &- p_0 \left[A_1(h_{2000},K_{01}) + 2 B(\hat h_{1001},\phi_0) + B_1(\phi_0,\phi_0,K_{01})\right] + 2 a \langle \psi_0, \hat h_{0101} \rangle \\
                                & + \langle \psi_1, b \hat h_{0101} + h_{1100} - \vartheta_{1000} \hat h_{1001} \rangle + \vartheta_{1000}. 
    \end{aligned}
\end{equation}

Now that $\vartheta_{0010}$, $\vartheta_{0001}$, $\gamma_3$, and $\gamma_4$ are
known, we can apply the Fredholm alternative to the remaining six systems in
\cref{btdde:eq:third_order_systems_tbt}. This yields equations which can be solved
separately, i.e. without a matrix as in
\cref{btdde:eq:zeta1_zeta2_tbt,btdde:eq:zeta3_zeta4_tbt}. To keep the derivation readable,
we divide six equations into three groups. We start by pairing the fifth and
sixth equations in \cref{btdde:eq:third_order_systems_tbt} with $\psi_1$. We obtain that 
\begin{align*}
\gamma_5 ={}& -p_1\left[2 A_1(\hat h_{1001}, K_{01}) + A_2(\phi_0,K_{01},K_{01})\right]. \\
0 ={}& p_1 \left[2 A_1(h_{0101},K_{01}) + A_1(\phi_1,K_{02}) + A_2(\phi_1,K_{01},K_{01})\right] \\
     & -\left< \psi_1, 2 h_{0101} + h_{1002} - 2 \vartheta_{0001} (h_{1001} + \phi_1 - \vartheta_{0001} \phi_0) \right>, \\
  ={}& p_1 \left[2 A_1(\hat h_{0101},K_{01}) + A_2(\phi_1,K_{01},K_{01})\right]  
     + p_0 \left[2 A_1(\hat h_{1001},K_{01})+ A_2(\phi_0,K_{01},K_{01})\right] \\
     & -2 \langle \psi_1, \hat h_{0101} \rangle + \gamma_6 + 2 \vartheta_{0001}
\end{align*}
Here we used
\cref{btdde:eq:seconder_order_coefficients_phase_space_tbt,btdde:eq:second_order_parameter_coefficients_tbt},
and the (by now) equalities given \cref{btdde:eq:second_order_solvability_condition_tbt} and 
\cref{btdde:eq:tbt_k_system}. It follows that 
\begin{equation}
\begin{aligned}
\gamma_6 ={}& -p_1 \left[2 A_1(\hat h_{0101},K_{01}) + A_2(\phi_1,K_{01},K_{01})\right]  
              - p_0 \left[2 A_1(\hat h_{1001},K_{01})+ A_2(\phi_0,K_{01},K_{01})\right] \\
            & +2 \langle \psi_1, \hat h_{0101} \rangle - 2 \vartheta_{0001}. \nonumber
\end{aligned}
\end{equation}
By applying the Fredholm alternative to the last four systems in
\cref{btdde:eq:third_order_systems_tbt}, we obtain equations which can be solved 
similarly as the last two equations. We obtain that
\begin{equation}
\begin{aligned}
\gamma_7  ={}& - p_1 \left[A_1(\hat h_{1001},K_{10}) + A_1(\hat h_{1010},K_{01}) +  A_2(\phi_0,K_{01},K_{10})\right]
              + \langle\psi_1, \hat h_{0101} \rangle - 2 \vartheta_{0001}, \\
\gamma_8 ={}& -p_1 \left[A_1(\hat h_{0101},K_{10}) + A_1(\hat h_{0110},K_{01}) + A_2(\phi_1,K_{01},K_{10})\right] + \langle\psi_1, \hat h_{0110} \rangle\\
     & -p_0 \left[A_1(\hat h_{1001},K_{10}) + A_1(\hat h_{1010},K_{01}) + A_2(\phi_0,K_{01},K_{10})\right] + \langle\psi_0, \hat h_{0101}\rangle - \vartheta_{0010},\\
\gamma_9 ={}& -p_1 \left[2 A_1(\hat h_{1010},K_{10}) + A_2(\phi_0,K_{10},K_{10})\right] + 2 \langle\psi_1, \hat h_{0110} \rangle - 4 \vartheta_{0010}, \\
\gamma_{10} ={}& -p_1 \left[2 A_1(\hat h_{0110},K_{10}) + A_2(\phi_1,K_{10},K_{10})\right] \\
     & - p_0 \left[2 A_1(\hat h_{1010},K_{10}) + A_2(\phi_0,K_{10},K_{10})\right] + 2 \langle\psi_0, \hat h_{0110}\rangle.
\end{aligned}
\end{equation}
\begin{align*}
\end{align*}

Now when all systems in \cref{btdde:eq:third_order_systems_tbt} are consistent,
we use \cref{btdde:corollary:sol_double_zero_eig_polynomial} to obtain the solutions
\begin{equation}
\label{btdde:eq:third_order_systems_solutions_tbt}
\begin{aligned}
h_{2010} ={}& \BINV 0 \left(A_1(h_{2000},K_{10})+ 2 B(h_{1010},\phi_0) + B_1(\phi_0,\phi_0,K_{10}) \right., \\
            & \left. 2 \left(a h_{0110} + h_{1100} - \vartheta_{1000} \phi_1 - a \vartheta_{0010} \phi_1   \right)\right), \\
h_{1110} ={}& \BINV 0 \left(A_1(h_{1100},K_{10}) + B(h_{0110},\phi_0) + B(h_{1010},\phi_1) + B_1(\phi_0,\phi_1,K_{10})\right., \\
            & b h_{0110} + h_{0200} + h_{2010} - \vartheta_{1000} (h_{1010} - \vartheta_{0010} \phi_0) \\
            & \left. - \vartheta_{0010} (b \phi_1 - \vartheta_{1000} \phi_0 + h_{2000})  \right), \\
h_{2001} ={}& \BINV 0 \left( A_1(h_{2000},K_{01}) + 2 B(h_{1001},\phi_0) + B_1(\phi_0,\phi_0,K_{01}), 2 a (h_{0101} - \vartheta_{0001} \phi_1) \right), \\
h_{1101} ={}& \BINV 0 \left(A_1(h_{1100},K_{01}) + B(h_{0101},\phi_0) + B(h_{1001},\phi_1) + B_1(\phi_0,\phi_1,K_{01})\right., \\
            & b h_{0101} + h_{1100} + h_{2001} - \vartheta_{1000} (h_{1001} + \phi_1 - \vartheta_{0001} \phi_0) \\
            & \left. - \vartheta_{0001} (b \phi_1 - \vartheta_{1000} \phi_0 + h_{2000}) \right), \\
h_{1002} ={}& \BINV 0 \left(2 A_1(h_{1001},K_{01}) + A_1(\phi_0,K_{02}) + A_2(\phi_0,K_{01},K_{01})\right), \\
h_{0102} ={}& \BINV 0 \left(2 A_1(h_{0101},K_{01}) + A_1(\phi_1,K_{02}) + A_2(\phi_1,K_{01},K_{01})\right., \\
            & \left. 2 h_{0101} + h_{1002} - 2 \vartheta_{0001} (h_{1001} + \phi_1 - \vartheta_{0001} \phi_0) \right), \\
h_{1011} ={}& \BINV 0 \left(A_1(h_{1001},K_{10}) + A_1(h_{1010},K_{01}) + A_1(\phi_0,K_{11}) + A_2(\phi_0,K_{01},K_{10})\right., \\
            & \left. h_{0101} - \vartheta_{0001} \phi_1\right), \\
h_{0111} ={}& \BINV 0 \left(A_1(h_{0101},K_{10}) + A_1(h_{0110},K_{01}) + A_1(\phi_1,K_{11}) + A_2(\phi_1,K_{01},K_{10})\right., \\
            & \left. h_{0110} + h_{1011} - \vartheta_{0010} (h_{1001} + \phi_1 - \vartheta_{0001} \phi_0) - \vartheta_{0001} (h_{1010} - \vartheta_{0010} \phi_0) \right), \\
h_{1020} ={}& \BINV 0 \left(2 A_1(h_{1010},K_{10}) + A_1(\phi_0,K_{20}) + A_2(\phi_0,K_{10},K_{10}), 2(h_{0110} - \vartheta_{0010} \phi_1) \right), \\
h_{0120} ={}& \BINV 0 \left(2 A_1(h_{0110},K_{10}) + A_1(\phi_1,K_{20}) + A_2(\phi_1,K_{10},K_{10})\right., \\
                         & \left.h_{1020} - 2 \vartheta_{0010} (h_{1010} - \vartheta_{0010} \phi_0)\right). \\
\end{aligned}
\end{equation}

\subsubsection{Homolinic asymptotics}
\label{btdde:sec:transcritical_bt_homoclinic_asymptotics}
To approximate the homoclinic solutions emanating from the transcritical Bogdanov--Takens point,
we apply the singular rescaling
\begin{equation}
\label{btdde:eq:blowup}				
\beta_1 = \pm 4 \frac{a^2}{b^2} \epsilon^2, \quad 
\beta_2 = \frac a b \left(\tau \pm 2\right) \epsilon^2, \quad 
w_0 = \frac a{b^2} (u \mp 2) \epsilon^2, \quad
w_1 = \frac{a^2}{b^3} v \epsilon^3, \quad
s = \frac ab \epsilon \eta, \quad (\epsilon \neq 0),
\end{equation}
to \cref{btdde:eq:normal_form_orbital_tbt} with $h(0,0)=0$ to obtain the second-order
nonlinear oscillator
\begin{equation}
    \label{btdde:eq:second_order_nonlinear_oscillator}
    \ddot u = -4 + u^2 + \dot u \left( u + \tau \right)\epsilon + \mathcal O(\epsilon^4).
\end{equation}
Note that this is precisely the second-order nonlinear oscillator obtained in
\cite{Bosschaert@Interplay} when deriving asymptotics for the homoclinic
solutions emanating from the generic Bogdanov--Takens bifurcation.
Therefore, the third-order homoclinic asymptotics for the homoclinic orbits emanating from \cref{btdde:eq:normal_form_orbital_tbt}
are given by
\begin{equation}
\label{btdde:eq:third_order_predictor_LP_tau_tbt}
\begin{cases}
\begin{aligned}
w_0(\eta)  &= \frac{a}{b^2} \left[ \tilde {u}\left(\tanh\left(\xi\left(\frac{a}{b}\epsilon\eta\right)\right)\right) \mp 2 \right] \epsilon^2, \\
w_1(\eta)  &= \frac{a^2}{b^3} \tilde {v}\left(\tanh\left(\xi\left(\frac{a}{b}\epsilon\eta\right)\right)\right) \epsilon^3, \\
\beta_1 &= \pm 4 \frac{a^2}{b^2} \epsilon^2, \\
\beta_2 &= \frac a b \left(\tau \pm 2\right) \epsilon^2, \quad 
\end{aligned}
\end{cases}
\end{equation}
where $\tau, \tilde u, \tilde v$, and $\xi$ are again given by \cref{btdde:eq:tau_utilde_vtilde_xi}.
Next, we need to obtain $\eta(t)$ to approximate the profiles of the homoclinic solution. For
this we integrate \cref{btdde:eq:theta_expansion_TBT} to obtain
\[
t(\eta) = \left(1  + \vartheta_{0010} \beta_1 + \vartheta_{0001} \beta_2 \right)\eta  + \vartheta_{1000} \frac a{b^2} \int \left[ \tilde {u}\left(\tanh\left(\xi\left(\frac{a}{b}\epsilon\eta\right)\right)\right) \mp 2 \right] \epsilon^2 \, d\eta.
\]
Here, $\beta_1$ and $\beta_2$ are given by \cref{btdde:eq:third_order_predictor_LP_tau_tbt} and from \cite{Bosschaert@Interplay} we obtain that
\begin{equation}
\label{btdde:eq:int_u_s_regular_perturbation}
\begin{aligned}
\epsilon \frac a b \int
\tilde {u}\left(\tanh\left(\xi\left(\frac{a}{b}\epsilon\eta\right)\right)\right) \, d\eta
={}& 2 \tilde\xi -6 \tanh (\tilde\xi ) + 
     \left( \frac{18 \sech^2(\tilde\xi )}{7}+\frac{12}{7} \log (\cosh (\tilde\xi ))
		 \right) \epsilon \\ &+
		 \frac{9}{49} \left(4 \tilde\xi -9 \tanh (\tilde\xi )+5 \tanh (\tilde\xi ) \sech^2(\tilde\xi
		 )\right) \epsilon^2 \\ &+
		 \frac{18 \left(-21 \sech^4(\tilde\xi )+47 \sech^2(\tilde\xi )+8 \log (\cosh
		 (\tilde\xi ))\right)}{2401} \epsilon^3  \\
                                &+ \mathcal{O}(\epsilon^4),
\end{aligned}
\end{equation}
where $\tilde\xi = \xi(\frac{a}{b}\eta\epsilon)$. 

It remains to numerically solve the equation 
\begin{equation*}
    t(\eta) - t = 0,
\end{equation*}
for $\eta$.

\subsubsection{Codimension-one equilibrium bifurcations}
To approximate the transcritical and Hopf curves and their corresponding equilibria in
\cref{btdde:eq:normal_form_orbital_tbt}, one should substitute the expressions for $\beta$
and the equilibrium coordinates into the expansions \cref{btdde:eq:h_expansion-TBT} and
\cref{btdde:eq:K_expansion_TBT}. It follows that the transcritical curve is approximated by 
\begin{equation*}
\begin{cases}
\begin{aligned}
    x &= \mathcal H\left(0,0,0,0\right), \\
    \alpha &= K(0,\epsilon),
\end{aligned}
\end{cases}
\end{equation*}
for $|\epsilon|$ small. The Hopf curves are approximated by
\begin{equation*}
\begin{cases}
\begin{aligned}
    \omega &= \epsilon, \\
    x &= \mathcal H\left(0,0,-\epsilon^2,0\right), \\
    \alpha &= K\left(-\epsilon^2,0\right),
\end{aligned}
\end{cases}
\end{equation*}
and
\begin{equation*}
\begin{cases}
\begin{aligned}
    \omega &= \epsilon, \\
    x &= \mathcal H\left(-\frac{\epsilon^2}{a},0,\epsilon^2,\frac ba\epsilon^2\right), \\
    \alpha &= K\left(\epsilon^2,\frac ba\epsilon^2\right),
\end{aligned}
\end{cases}
\end{equation*}
for $\epsilon > 0$ small.

\section{Examples}
\label{btdde:sec:Examples}

In this section, we will demonstrate the correctness of the center manifold
reduction and the homoclinic asymptotics from
\cref{btdde:sec:parameter-dependent-center-manifold-reduction} on four different
models from
\cite{Jiao2021,giannakopoulos2001bifurcations,jiang2007bogdanov,dong2013bogdanov}
in which Bogdanov--Takens bifurcation points are present. Using our
implementation in \DDEBIFTOOL, we will compute the local unfolding of the
Bogdanov--Takens singularities and compare the emanating codimension one curves
with the derived asymptotics.  We will show a clear improvement to the
homoclinic solutions by using the third-order instead of the first order asymptotics. 

Additionally, following \cite{Bosschaert@Interplay}, we will also show that the
approximation order of the homoclinic asymptotic lifts correctly to the
parameter-dependent center manifold, see in particular
\cref{btdde:fig:convergencePlotsGeneric,btdde:fig:convergencePlotsTranscritical}. 

Lastly, we will perform numerical simulations near the bifurcation point under
consideration. The reason for this extra step is twofold. Firstly, it will
provide additional verification of the given results. Secondly, it seems to be
a standard part in papers studying Bogdanov--Takens bifurcation points in
DDEs. We will integrate the DDE directly, not the reduced
system on the center manifold, which is often the case in literature. The
simulation is performed using the Julia package {\tt DifferentialEquations.jl}
\cite{rackauckas2017differentialequations}. However, since in this section only
the main results are given, we provide full details (including simulation
results) in the %
\ifthesis%
    \cref{chapter:BT_DDE_supplement}%
\fi%
\ifsiam%
    \hyperref[mysupplement]{online Supplement}%
\fi%
\ifarxiv%
    \hyperlink{mysupplement}{Supplement}%
\fi. Furthermore, the source
code of the examples has been included into the \DDEBIFTOOL software package.
Alltogether, this will hopefully provide a good starting point when considering other
models.

\subsection{Bogdanov--Takens bifurcation in a predator-prey system with double Allee effect}
\label{btdde:sec:example:predator_prey}
\begin{figure}[ht!]
    \centering
    \includegraphics{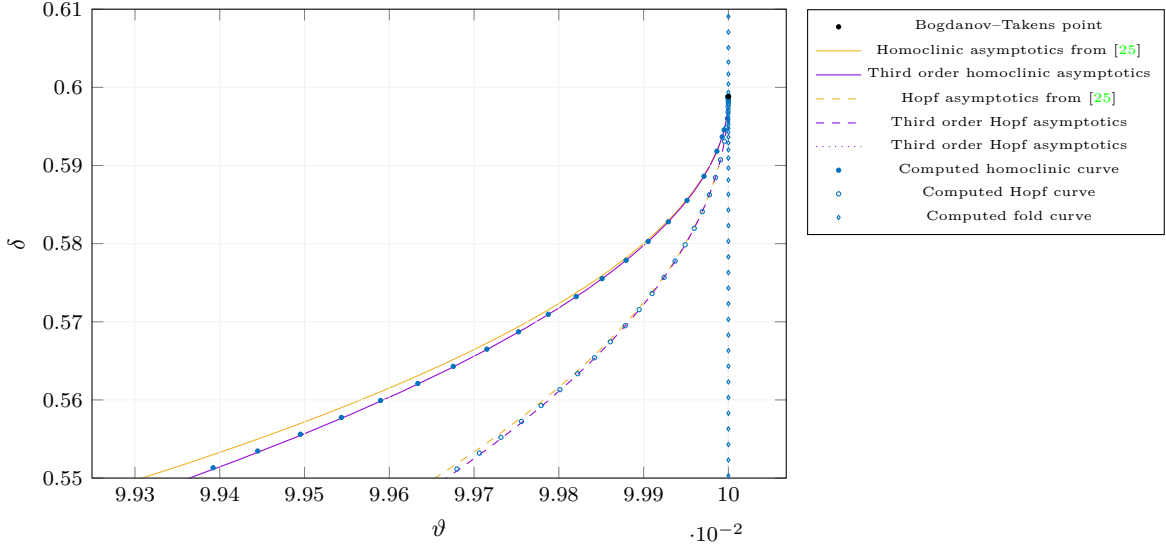}
    \caption{Bifurcation diagram near the derived generic Bogdanov-Takens point in
        \cref{btdde:eq:double_alle_effect_rescaled} comparing computed codimension one
        using \DDEBIFTOOL with the asymptotics obtained in this \paper{} and in \cite{Jiao2021}.}
    \label{btdde:fig:DoubleAlleeEffectCompareParameters}
\end{figure}
\begin{figure}[ht!]
    \centering
    \includegraphics{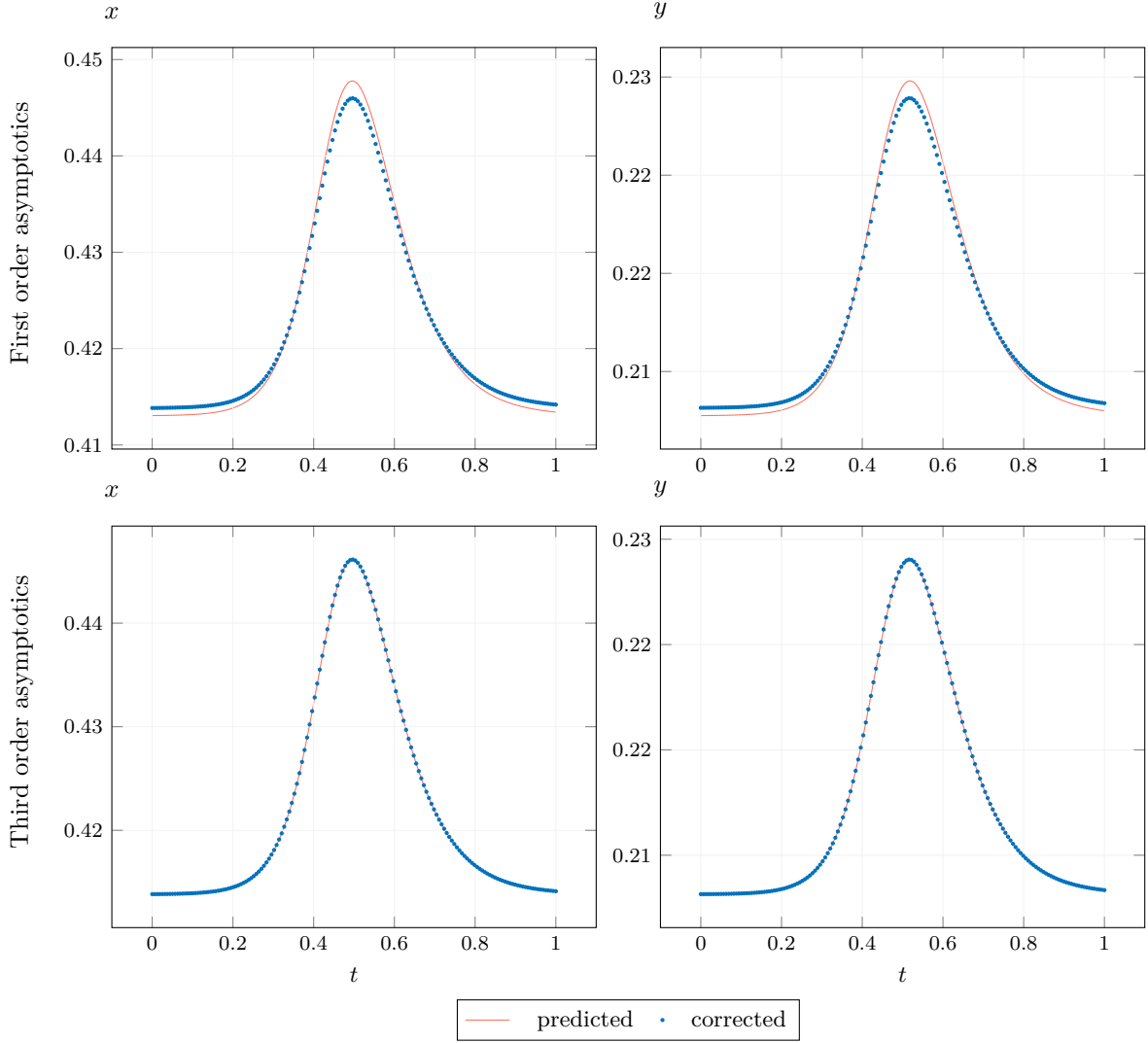}
    \caption{Comparison between the profiles of the first and third-order asymptotics from
    \cref{btdde:sec:generic_bt_homoclinic_asymptotics} with the Newton correct solutions near the generic
        Bogdanov--Takens bifurcation in \cref{btdde:eq:double_alle_effect_rescaled} with the
        perturbation parameter set to $\epsilon=0.3$.}
    \label{btdde:fig:DoubleAlleeEffectCompareProfiles}
\end{figure}
\begin{figure}[ht]
    \includegraphics{\imagedir/convergencePlotsGeneric.pdf}
    \caption{On the abscissa is the approximation to the amplitude $A_0$ and on
        the ordinate the relative error $\delta$ between the constructed
        solution to the defining system
        \cite{connecting@2002} for the homoclinic orbit and the Newton
        corrected solution.}
    \label{btdde:fig:convergencePlotsGeneric}
\end{figure}
In \cite{Jiao2021} the following predator-prey model with double Allee effect and delay is considered
\begin{equation}
\label{btdde:eq:double_alle_effect}
\begin{cases}
    \dot x(t) = \dfrac{rx}{x+n_0}\left(1-\dfrac1 K\right)\left(x - m_0\right) - \dfrac{cxy}{x+\varrho y},\\
    \dot y(t) = -dy + \dfrac{c_1 x(t-\tau)y}{x(t-\tau) + \varrho y(t-\tau)}.
\end{cases}
\end{equation}
Here, the time delay $\tau \geq 0$ is introduced due to the fact that the
reproduction of predator after consuming the prey is not instantaneous, but is
mediated by some time lag required for gestation.
The variables and parameters occurring in \cref{btdde:eq:double_alle_effect}
have the following meaning:
\begin{itemize}
\item $x,y\colon \mathbb R \rightarrow \mathbb R$ denote the prey and predator population densities, respectively.
\item $r$ denotes the maximum prey population growth in absence of the Allee effect.
\item $K$ is the carrying capacity of the environment.
\item $m_0$ is the Allee threshold.
\item $n_0$ is the auxiliary parameter in order to quantify the strength of the Allee effect.
\item $c$ denotes the capturing rate of the predator.
\item $\varrho$ is the half capturing saturation constant.
\item $c_1$ is the conversion rate of prey into predators biomass.
\item $d$ is the per capita predator mortality rate.
\end{itemize}
Following \cite{Jiao2021}, let $(x,y,t) = \left(K\bar x, \frac K \varrho \bar y, \frac{\bar t}r\right)$, and immediately 
dropping the bars again for readability, then \cref{btdde:eq:double_alle_effect} becomes
\begin{equation}
\label{btdde:eq:double_alle_effect_rescaled}
\begin{cases}
    \dot x(t) = x \left( \dfrac{(1-x)(x-\gamma)}{x+\vartheta} - \dfrac{\alpha y}{x+y} \right), \\
    \dot y(t) = \delta y \left( -1 + \dfrac{ m x(t-\tau) }{ x(t-\tau) + y(t-\tau) }\right),
\end{cases}
\end{equation}
where $\vartheta = \frac{n_0}K, \alpha=\frac c{r\varrho}, \gamma = \frac{m_0}K, \delta = \frac dr$ and $m=\frac{c_1}d$.
In \cite{Jiao2021} the phase-space is (although possible) unnecessarily
enlarged to include a parameter. The resulting extended system is used to
derive conditions for a Bogdanov--Takens bifurcation to occur and obtain the
normal form on the center manifold. In our opinion, one should only use the
extended system to prove the existence of a parameter-dependent center manifold,
from which it then follows that we can perform the normalization as
described in \cref{btdde:sec:Center_manifold_reduction}. Thus, solving \cref{btdde:eq:double_alle_effect_rescaled}
for a double equilibrium with respect to $(x,y)$ and $\vartheta$ yields
\begin{equation}
    \begin{cases}
        x_0 = \dfrac{m+\alpha-m\alpha+m\gamma}{2m}, \\
        y_0 = (m-1)x_0, \\
        \vartheta_0 = \dfrac{(\alpha+m(1-\alpha+\gamma))^2 - 4m^2\gamma}{4(m-1)m\alpha},
    \end{cases}
\end{equation}
for $m\neq 1$. The parameters should of course be chosen such that $x$ and $y$
are non-negative. The characteristic matrix at
$(x,y,\vartheta)=(x_0,y_0,\vartheta_0)$ becomes
\[
\begingroup
\renewcommand*{\arraystretch}{2.5}
    \Delta(z) = \begin{pmatrix}
        z + \dfrac{1-m}{m^2}\alpha & \dfrac \alpha {m^2} \\
        -\dfrac{(m-1)^2\delta}m e^{-z\tau} & z + \dfrac{(m-1)\delta}m e^{-z\tau},
    \end{pmatrix}
\endgroup
\]
for $m\neq 0$. A simple calculation confirms indeed that $\det\Delta(0)=0$.
Furthermore, $\det\Delta'(0)=\frac{(m-1)(m\delta-\alpha)}{m^2}$ and
$\det\Delta''(0)=2 - \frac{2 (m-1) \delta\tau}{m}$. Thus, for
$(x,y,\vartheta,\delta)=(x_0,y_0,\vartheta_0,\delta_0)$, with $\delta_0 =
\frac\alpha m$, we have a double zero eigenvalue, provided that
$\det\Delta''(0) \neq 0$. To confirm their analytical findings in \cite{Jiao2021}
numerically, the parameters $\gamma=0.15,\alpha=0.9$ and $m=1.50298303$ are
fixed and $(\vartheta,\delta)$ are taken as unfolding parameters. For these
parameters, we indeed have that $\det\Delta''(0) \neq 0$. Calculating the normal
form coefficients $a$ and $b$ reveal that,
\[
a \approx 0.1479, \qquad b \approx 1.4457.
\]
Thus, the codimension two Bogdanov--Takens bifurcation is non-degenerate.
Furthermore, since the equilibrium $(x_0,y_0)$ depends on the unfolding
parameters, we are in the generic case. Using our implementation of the
homoclinic predictor \cref{btdde:sec:generic_bt_homoclinic_asymptotics} in
\DDEBIFTOOL, we start the continuation of the homoclinic branch emanating from
the Bogdanov--Takens point.

In \cref{btdde:fig:DoubleAlleeEffectCompareParameters}, we compare the parameters of
the computed homoclinic branch with our third-order parameter asymptotics and
the parameters for the homoclinic branch given in \cite{Jiao2021}. We observe
that although the derivation of the asymptotics for the parameters is
non-rigorous, it provides a good predictor. Nonetheless, our third-order
asymptotics for the parameter values provides clearly a better approximation to
the parameter values of the computed homoclinic branch.

Of course, most work lies in the derivation of the asymptotics for the
homoclinic solutions in phase-space itself, which is not available in
\cite{Jiao2021}. In \cref{btdde:fig:DoubleAlleeEffectCompareProfiles} we compare the
first and third-order asymptotics for the profiles of the homoclinic solution.
Here, we set the perturbation parameter to $\epsilon=0.3$ such that both
approximations converge, and we observe a visual difference.

However, a better way to compare the first and third-order
asymptotics for the homoclinic solution numerically is through the convergence
plots, similarly as in \cite{Bosschaert@Interplay}. In
\cref{btdde:fig:convergencePlotsGeneric} we clearly see an improvement of using the
third-order asymptotics over the first order asymptotics. We furthermore
observe the standard V-shaped graphs due to round-off error.
\begin{figure}[ht]
    \includegraphics{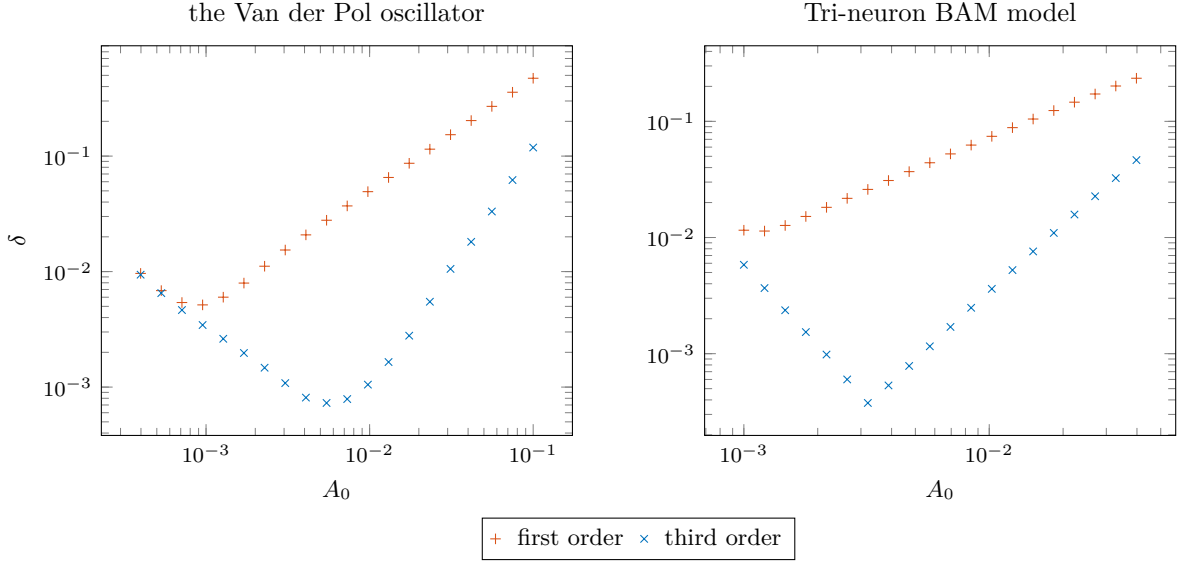}
    \caption{On the abscissa is the approximation to the amplitude $A_0$ and on
        the ordinate the relative error $\delta$ between the constructed solution
        to the defining system for the homoclinic orbit
        and the Newton corrected solution.}
    \label{btdde:fig:convergencePlotsTranscritical}
\end{figure}

\subsection{\ifthesis \phantom{ } \fi Generic Bogdanov--Takens bifurcation in a neural network model}
In this example, we will consider the model 
\begin{equation}
\label{btdde:eq:neural_network}
\begin{cases}
\mu\dot{u}_1(t) = -u_1(t) + q_{11}\alpha(u_1(t - T))-q_{12}u_2(t - T) + e_1,\\
\mu\dot{u}_2(t) = -u_2(t) + q_{21}\alpha(u_1(t - T))-q_{22}u_2(t - T) + e_2,
\end{cases}
\end{equation}
which describes the dynamics of a neural network consisting of an
excitatory and inhibitory neuron \cite{giannakopoulos2001bifurcations}.
The variables and parameters occurring in \cref{btdde:eq:neural_network}
have the following neurophysiological meaning:
\begin{itemize}
\item $u_1,u_2:\mathbb{R}\rightarrow\mathbb{R}$ denote the total post-synaptic
potentials of the excitatory and inhibitory neurons, respectively.
\item $\mu>0$ is a time constant characterizing the dynamical properties
of cell membrane.
\item $q_{ik}\geq0$ represent the strength of the connection line from
the $k$th neuron to the $i$th neuron.
\item $\alpha:\mathbb{R}\rightarrow\mathbb{R}$ is the transfer function
which describes the activity generation of the excitatory neuron as
a function of its total potential $u_1$. The function $\alpha$
is smooth, increasing and has a unique turning point at $u_1 = \theta$.
The transfer function corresponding to the inhibitory neuron is assumed
to be the identity.
\item $T\geq0$ is a time delay reflecting synaptic delay, axonal and dendritic
propagation time.
\item $e_1$ and $e_2$ are external stimuli acting on the excitatory
and inhibitory neuron, respectively.
\end{itemize}

Following \cite{giannakopoulos2001bifurcations} we consider the equation \cref{btdde:eq:neural_network} with
\begin{align*}
\alpha(u_1) & = \frac{1}{1 + e^{-4u_1}}-\frac{1}{2},\qquad q_{11} = 2.6,\qquad q_{21} = 1.0,\qquad q_{22} = 0.0,\\
\mu & = 1.0,\qquad T = 1.0,\qquad e_2 = 0.0,
\end{align*}
and $Q: = q_{12},\,E: = e_1$ as bifurcation parameters. Substituting
into \cref{btdde:eq:neural_network} yields
\begin{equation}
\label{btdde:eq:neural_network_subs}
\begin{cases}
\dot{u}_1(t) = -u_1(t) + 2.6\alpha(u_1(t - 1))-Qu_2(t - 1) + E,\\
\dot{u}_2(t) = -u_2(t) + \alpha(u_1(t - 1)).
\end{cases}
\end{equation}
Notice that for any steady-state we have the symmetry
$(u_1,u_2,E)\rightarrow(-u_1,-u_2,-E)$. It is easy to explicitly derive that the system has a double eigenvalue zero for
\begin{equation}
    \left\{
    \begin{aligned}
        u_1(t) &= \frac14 \log\left(\frac{8 - \sqrt{39}}5\right) \approx -0.2617, \\
        u_2(t) &= -\frac12 \sqrt{\frac{3}{13}} \approx -0.2402, \\
        Q &= \frac{13}{10}, \\
        E &= \frac{\sqrt{39} - 10\atanh \sqrt{\frac{3}{13}}}{20} \approx 0.0505.
    \end{aligned}\right.
\end{equation}

The dependence of the equilibria on the parameters $(Q,E)$ yields a generic
Bogdanov--Takens bifurcation, see \cite{giannakopoulos2001bifurcations}. Notice
that the normal form reduction in \cite{giannakopoulos2001bifurcations} is
incorrect, which leads to the normal form for a transcritical Bogdanov--Takens
bifurcation.

\begin{figure}[ht!]
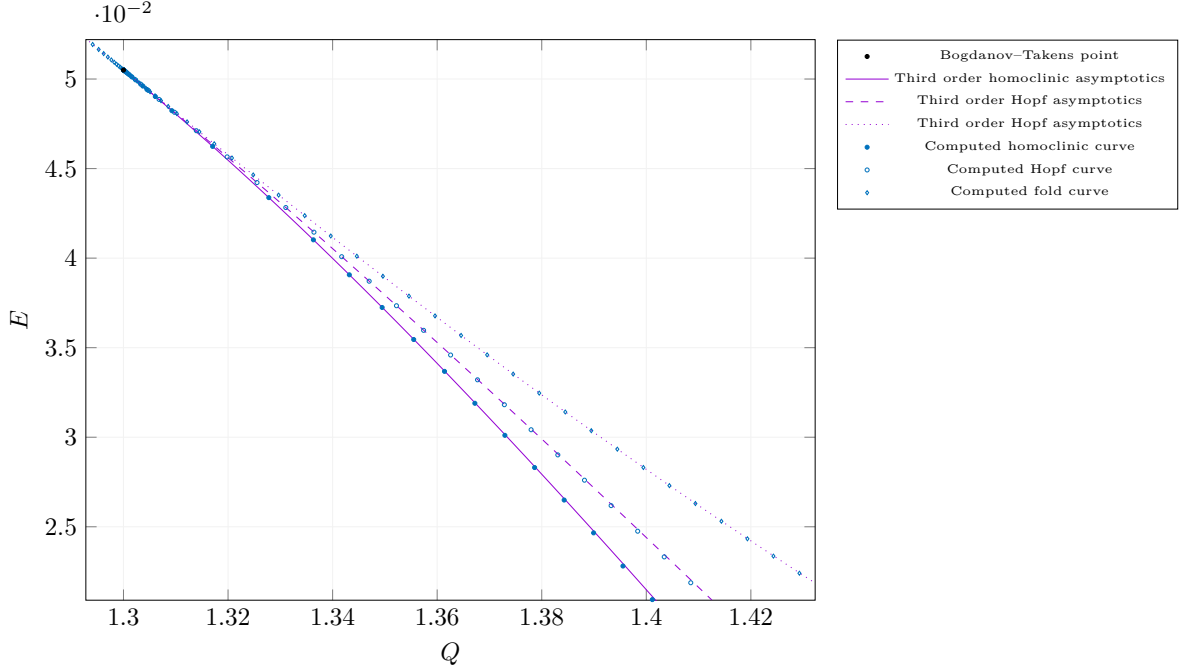

    \centering
    \includetikzscaled{NeuralNetworkCompareParameters}
    \caption{Bifurcation diagram near the derived generic Bogdanov-Takens point in
        \cref{btdde:eq:neural_network_subs} comparing computed codimension one curves using
        \DDEBIFTOOL with the third-order homoclinic parameter asymptotics obtained
        in \cref{btdde:sec:generic_bt_homoclinic_asymptotics}.}
    \label{btdde:fig:NeuralNetworkCompareParameters}
\end{figure}

In \cref{btdde:fig:NeuralNetworkCompareParameters} we compared the computed
codimension one curves emanating from a generic Bogdanov--Takens point in
\cref{btdde:eq:neural_network_subs} using our implementation in \DDEBIFTOOL with the
third-order homoclinic parameter asymptotics obtained in
\cref{btdde:sec:generic_bt_homoclinic_asymptotics}. We see that the predicted and
computed curves are almost indistinguishable. Furthermore, in
\cref{btdde:fig:convergencePlotsGeneric}, we see that using the third-order homoclinic
asymptotics over the first order is clearly superior.

\subsection{Transcritical Bogdanov--Takens bifurcation in the Van der Pol oscillator with delay feedback}
We consider the Van der Pol oscillator with delay feedback \cite{jiang2007bogdanov}
given by 
\begin{equation}
\label{btdde:eq:dde_vanderPol}
\ddot{x}(t) + \epsilon(x^2(t)-1)\dot{x}(t) + x(t) = \epsilon g(x(t-\tau)),
\end{equation}
where $\epsilon>0$ is a parameter, $\tau>0$ is a delay and $g:\mathbb{R}\rightarrow\mathbb{R}$
is a smooth function with $g(0) = 0$ and $g'(0)\neq0$. We rewrite
the Van der Pol equation \cref{btdde:eq:dde_vanderPol} as
\begin{equation}
\label{btdde:eq:vanderPolOscillator}
\begin{cases}
    \dot{x}_1 = x_2,\\
    \dot{x}_2 = \epsilon g(x_1(t-\tau))-\epsilon(x_1^2-1)x_2-x_1.
\end{cases}
\end{equation}
Rescaling time with $t\rightarrow\dfrac{t}{\tau}$ to normalize the
delay yields
\begin{equation}
\label{btdde:eq:vanderPolOscillatorRescaled}
\begin{cases}
\dot{x}_1 = \tau x_2,\\
\dot{x}_2 = \tau\left(\epsilon g(x_1(t-1))-\epsilon(x_1^2-1)x_2-x_1\right).
\end{cases}
\end{equation}
This allows us to treat $\tau$ as a bifurcation parameter.

Following \cite{jiang2007bogdanov}, we consider \cref{btdde:eq:dde_vanderPol} with
\[
g(x) = \frac{e^x-1}{c_1e^x + c_2},
\]
where $c_1 = \dfrac{1}{4}$ and $c_2 = \dfrac{1}{2}$. Then the trivial
equilibrium undergoes a transcritical Bogdanov--Takens bifurcation at parameter
values $(\epsilon,\tau) = (0.75,0.75)$, see \cite{jiang2007bogdanov} and the
supplement. 
\begin{figure}[ht]
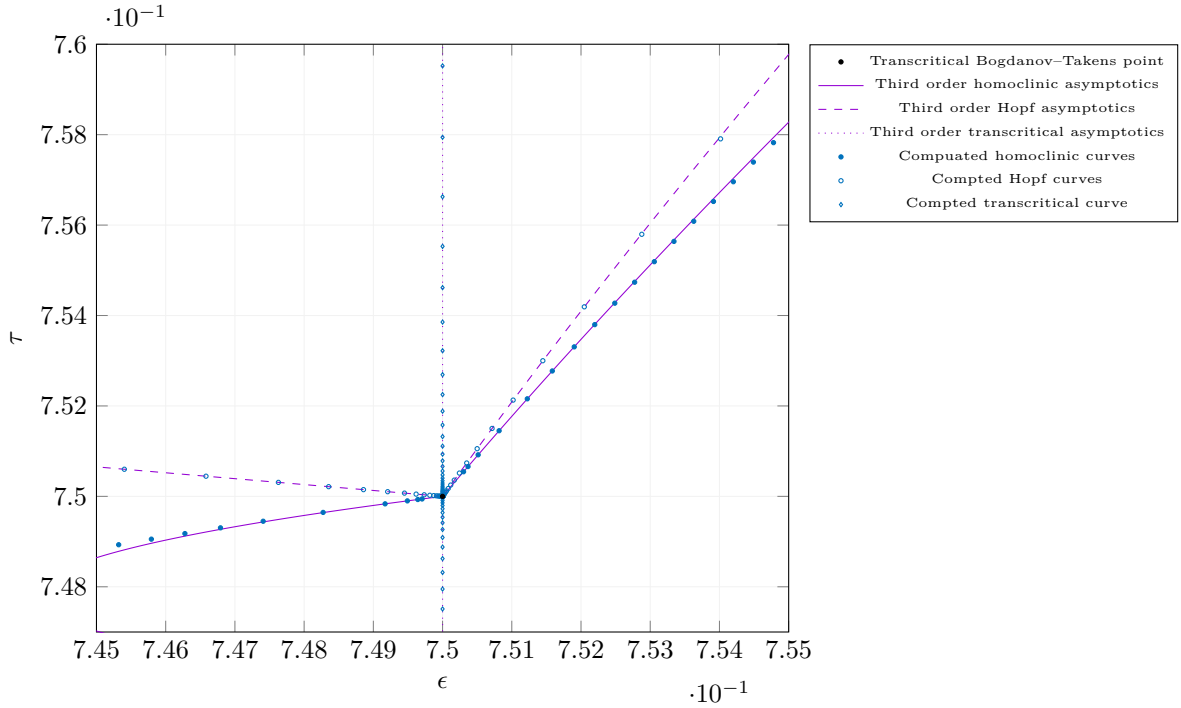

    \centering
    \includetikzscaled{vanderPolOscillatorCompareParameters}
    \caption{Bifurcation diagram near the derived generic Bogdanov-Takens point in
        \cref{btdde:eq:vanderPolOscillatorRescaled} comparing computed codimension one curves using
        \DDEBIFTOOL with the third-order homoclinic parameter asymptotics obtained
        in \cref{btdde:sec:transcritical_bt_homoclinic_asymptotics}.}
    \label{btdde:fig:vanderPolOscillatorCompareParameters}
\end{figure}

In \cref{btdde:fig:vanderPolOscillatorCompareParameters} we compared the computed
codimension one curves emanating from a transcritical Bogdanov--Takens point in
\cref{btdde:eq:vanderPolOscillatorRescaled} using our implementation in \DDEBIFTOOL
with the third-order homoclinic parameter asymptotics obtained in
\cref{btdde:sec:transcritical_bt_homoclinic_asymptotics}. We see that the predicted
and computed curves are almost indistinguishable. Furthermore, in
\cref{btdde:fig:convergencePlotsTranscritical} we see that using the third-order
homoclinic asymptotics over the first order is clearly superior.

\subsection{Transcritical Bogdanov--Takens bifurcation in a tri-neuron BAM neural network model}
\label{btdde:sec:Tri-neuron-BAM-neural}

We consider a three-component system of a tri-neuron bidirectional
associative memory (BAM) neural network model with multiple delays \cite{dong2013bogdanov}.
The architecture of this BAM model is illustrated in \cref{btdde:fig:BAM_architecture_graph}. 

\begin{figure}
\centering
\includetikzscaled[0.75]{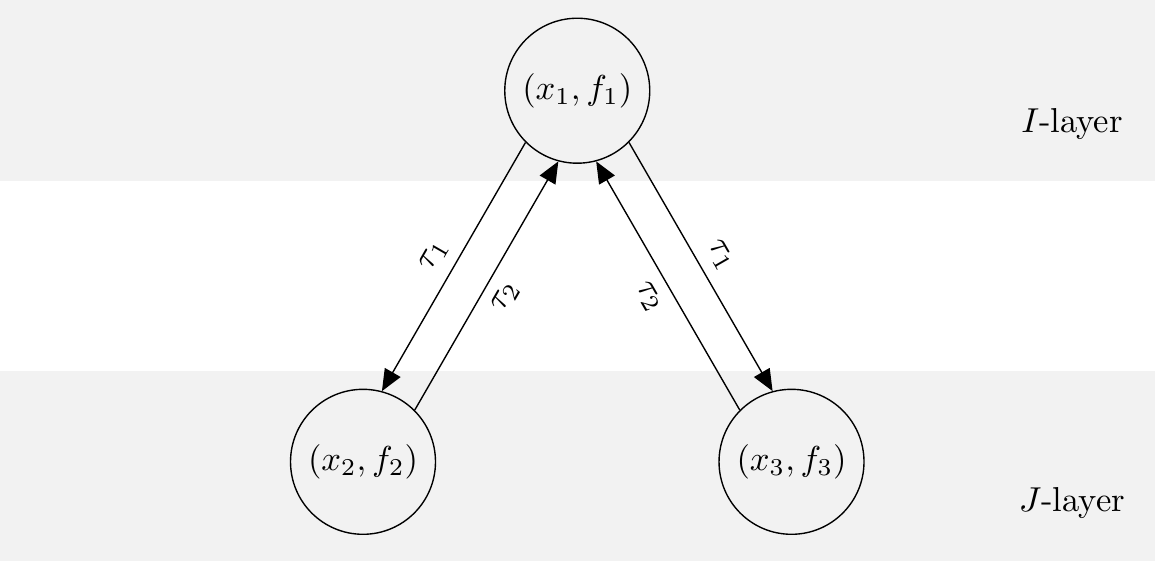}
\caption{The graph of architecture for model \cref{btdde:eq:tri_neuron_BAM}.}
\label{btdde:fig:BAM_architecture_graph}
\end{figure}

\begin{figure}[ht]
\centering
\includetikzscaled{triNeuronBAMNeuralNetworkModelCompareParameters}
\caption{
Bifurcation diagram near the transcritical Bogdanov--Takens bifurcation and
a generic Bogdanov--Takens in \cref{btdde:eq:tri_neuron_BAM-u} comparing computed
codimension one using \DDEBIFTOOL with the asymptotics obtained in this
\paper{}.}
\label{btdde:fig:triNeuronBAMNeuralNetworkModelCompareParameters}
\end{figure}

In this model, there is only one neuron with the activation function
$f_{1}$ on the $I$-layer and there are two neurons with respective
activation functions $f_{2}$ and $f_{3}$ on the $J$-layer. It is assumed 
that the time delay from the $I$-layer to the $J$-layer is $\tau_{1}$,
while the time delay from the $J$-layer to the $I$-layer is $\tau_{2}$.
Then the network can be described by the following delay differential equation
\begin{equation}
\label{btdde:eq:tri_neuron_BAM}
\begin{cases}
\dot{x}_{1}(t) & =-\mu_{1}x_{1}(t)+c_{21}f_{1}(x_{2}(t-\tau_{2}))+c_{31}f_{1}(x_{3}(t-\tau_{2})),\\
\dot{x}_{2}(t) & =-\mu_{2}x_{2}(t)+c_{12}f_{2}(x_{1}(t-\tau_{1})),\\
\dot{x}_{3}(t) & =-\mu_{3}x_{3}(t)+c_{13}f_{3}(x_{1}(t-\tau_{1})).
\end{cases}
\end{equation}
Here
\begin{itemize}
\item $x_{i}(t)\,(i=1,2,3)$ denote the state of the neuron at time $t$;
\item $\mu_{i}(i=1,2,3)$ describe the attenuation rate of internal neurons
processing on the $I$-layer and the $J$-layer and $\mu_{i}>0$;
\item the real constants $c_{i1}$and $c_{1i}\,(2,3)$ denote the neurons
in two layers: the $I$-layer and the $J$-layer.
\end{itemize}
Letting $u_{1}(t)=x_{1}(t-\tau_{1}),u_{2}(t)=x_{2}(t),u_{3}(t)=x_{3}(t)$
and $\tau=\tau_{1}+\tau_{2}$, then system \cref{btdde:eq:tri_neuron_BAM}
is equivalent to the following system

\begin{equation}
\label{btdde:eq:tri_neuron_BAM-u}
\begin{cases}
\dot{u}_{1}(t) & =-\mu_{1}u_{1}(t)+c_{21}f_{1}(u_{2}(t-\tau))+c_{31}f_{1}(u_{3}(t-\tau)),\\
\dot{u}_{2}(t) & =-\mu_{2}u_{2}(t)+c_{12}f_{2}(u_{1}(t)),\\
\dot{u}_{3}(t) & =-\mu_{3}u_{3}(t)+c_{13}f_{3}(u_{1}(t)).
\end{cases}
\end{equation}

In \cite{dong2013bogdanov} conditions for \cref{btdde:eq:tri_neuron_BAM-u} are derived
for the origin to have a double zero eigenvalue, while all other eigenvalues
have negative real parts. Since the provided expression did not give the correct results,
we corrected the given expression from \cite{dong2013bogdanov}.
We postpone the proof of the next two lemma's to the 
\ifthesis%
    \cref{chapter:BT_DDE_supplement}%
\fi%
\ifsiam%
    \hyperref[mysupplement]{online Supplement}%
\fi%
\ifarxiv%
    \hyperlink{mysupplement}{Supplement}%
\fi.

\begin{lemma}
\label{btdde:lem:BAM_double_eigenvalue}
Assume that $f_{i}(0)=0\,(i=1,2,3)$,
$f_{i}'(0)\neq0\,(i=1,2,3)$ and $\mu_{2}\neq\mu_{3}$, then the steady-state
$(u_{1},u_{2},u_{3})=(0,0,0)$ has a double zero eigenvalue at 
\begin{align*}
c_{21} & =c_{21}^{0}=\frac{\mu_{2}^{2}\left(\mu_{1}\left(\mu_{3}\tau+1\right)+\mu_{3}\right)}{c_{12}\left(\mu_{2}-\mu_{3}\right)f_{1}'(0)f_{2}'(0)},\\
c_{31} & =c_{31}^{0}=\frac{\mu_{3}^{2}\left(\mu_{1}\left(\mu_{2}\tau+1\right)+\mu_{2}\right)}{c_{13}\left(\mu_{3}-\mu_{2}\right)f_{1}'(0)f_{3}'(0)}.
\end{align*}
\end{lemma}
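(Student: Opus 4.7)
The plan is to reduce the lemma to a direct computation with the characteristic matrix of the linearization of \cref{btdde:eq:tri_neuron_BAM-u} at the origin, and then solve a $2\times 2$ linear system for the two bifurcation parameters $c_{21},c_{31}$.

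First I would linearize \cref{btdde:eq:tri_neuron_BAM-u} at $u=0$, using only $f_i(0)=0$ and $f_i'(0)\ne 0$, and write down the characteristic matrix in the form \cref{btdde:eq:CharMatrix}. This gives a $3\times 3$ matrix whose off-diagonal entries in the first row carry the factor $e^{-z\tau}$ coming from the combined delay, while the lower-left entries are constant. Expanding the determinant along the first row (or first column) produces an expression of the shape
\[
\det\Delta(z) = (z+\mu_1)(z+\mu_2)(z+\mu_3) - \alpha e^{-z\tau}(z+\mu_3) - \beta e^{-z\tau}(z+\mu_2),
\]
where $\alpha \DEF c_{21}c_{12}f_1'(0)f_2'(0)$ and $\beta \DEF c_{31}c_{13}f_1'(0)f_3'(0)$ are the unknown combinations.

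Next I would impose that $z=0$ is a root of algebraic multiplicity at least two, i.e.\ $\det\Delta(0)=0$ and $(\det\Delta)'(0)=0$. Evaluating directly yields the linear system
\[
\begin{pmatrix} \mu_3 & \mu_2 \\ 1-\tau\mu_3 & 1-\tau\mu_2 \end{pmatrix}
\begin{pmatrix} \alpha \\ \beta \end{pmatrix} =
\begin{pmatrix} \mu_1\mu_2\mu_3 \\ \mu_1\mu_2+\mu_1\mu_3+\mu_2\mu_3 \end{pmatrix}.
\]
The determinant of the coefficient matrix is $\mu_3-\mu_2$, which is nonzero precisely by the hypothesis $\mu_2\ne\mu_3$. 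Applying Cramer's rule and simplifying yields
\[
\alpha = \frac{\mu_2^2\bigl(\mu_1(\mu_3\tau+1)+\mu_3\bigr)}{\mu_2-\mu_3}, \qquad
\beta  = \frac{\mu_3^2\bigl(\mu_1(\mu_2\tau+1)+\mu_2\bigr)}{\mu_3-\mu_2},
\]
and solving for $c_{21},c_{31}$ via $c_{21}=\alpha/(c_{12}f_1'(0)f_2'(0))$ and $c_{31}=\beta/(c_{13}f_1'(0)f_3'(0))$ produces exactly the formulas for $c_{21}^0$ and $c_{31}^0$ in the lemma.

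The computation is entirely mechanical, so the only care needed is in the determinant expansion and the algebraic simplification of the Cramer numerators; I would double-check signs and that the geometric multiplicity is indeed one (which follows from $\text{rank}\,\Delta(0)=2$, since, for instance, the $2\times 2$ minor formed by the last two rows and columns equals $\mu_2\mu_3\ne 0$). No serious obstacle is anticipated; the nondegeneracy hypothesis $\mu_2\ne\mu_3$ is exactly the invertibility condition for the $2\times 2$ system, so the formulas are uniquely determined.
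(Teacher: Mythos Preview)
Your approach is essentially identical to the paper's: compute $\det\Delta(z)$, impose $\det\Delta(0)=(\det\Delta)'(0)=0$, and solve the resulting $2\times 2$ linear system (the paper writes out the same two conditions as \cref{sm:eq:BAM_double_eigvalue_zero_condition} and solves them). The one small addition in the paper is an explicit verification that $(\det\Delta)''(0)\ne 0$ at $(c_{21}^0,c_{31}^0)$, confirming the algebraic multiplicity is \emph{exactly} two; your rank check on $\Delta(0)$ handles the geometric multiplicity instead, so you may want to add the second-derivative check to match the full content of ``double.''
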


\begin{lemma}
\label{btdde:lemma:triNeuralBAMNetworkModelEigenvalues}
\textup{Correction to \cite[Lemma 3]{dong2013bogdanov}.}
Let $(c_{21},c_{31})=(c_{21}^{0},c_{31}^{0})$,
\begin{equation}
    \label{btdde:sm:eq:omega_0} 
    \omega_0 = \frac{\sqrt{-\mu_1^2 - \mu_2^2 - \mu_3^2 + \sqrt{\zeta_0}}}{\sqrt{2}}
\end{equation}
and $0<\tau<\tau_{0}$, where $\tau_0$ is the minimum positive solution to the nonlinear equation
\begin{equation}
    \label{btdde:sm:eq:tan} 
    \tan (\tau \omega_0) = \frac{b_0\zeta_1 - a_0\zeta_2}{a_0\zeta_1 + b_0\zeta_2},
\end{equation}
with
\begin{align*}
a_0 &= -\mu_1\mu_2\mu_3, \\ 
b_0 &= -\omega_0(\mu_2\mu_3 + \mu_1(\mu_2 + \mu_3 + \mu_2\mu_3\tau)), \\
\zeta_0 &= \mu_1^4 + (\mu_2^2 + \mu_3^2)^2 + 8\mu_1\mu_2\mu_3(\mu_2 + \mu_3 + \mu_2\mu_3\tau) + \\
        &\qquad 2\mu_1^2(\mu_3^2 + 4\mu_2\mu_3(1 + \mu_3\tau) + \mu_2^2(1 + 2\mu_3\tau(2 + \mu_3\tau))), \\
\zeta_1 &= \mu_1\mu_2\mu_3 - (\mu_1 + \mu_2 + \mu_3)\omega_0^2, \\
\zeta_2 &= \mu_2\mu_3\omega_0 + \mu_1(\mu_2 + \mu_3)\omega_0 - \omega_0^3.
\end{align*}
Then the center manifold near the Bogdanov--Takens point is locally attractive.
\end{lemma}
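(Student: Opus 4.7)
The plan is to reduce local attractivity of the center manifold to a spectral condition: the center manifold is locally attractive precisely when, at $(c_{21},c_{31})=(c_{21}^0,c_{31}^0)$, all roots of the characteristic equation besides the double zero guaranteed by \cref{btdde:lem:BAM_double_eigenvalue} lie in the open left half-plane. I would therefore locate the smallest $\tau>0$ at which a new eigenvalue appears on the imaginary axis and identify it with $\tau_0$.

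Concretely, I would expand the determinant of the characteristic matrix
\[
    \Delta(z) = \begin{pmatrix}
        z+\mu_1 & -c_{21}f_1'(0)e^{-z\tau} & -c_{31}f_1'(0)e^{-z\tau} \\
        -c_{12}f_2'(0) & z+\mu_2 & 0 \\
        -c_{13}f_3'(0) & 0 & z+\mu_3
    \end{pmatrix}
\]
into the quasi-polynomial $P(z)+Q(z)e^{-z\tau}=0$, with $P(z)=(z+\mu_1)(z+\mu_2)(z+\mu_3)$ cubic and $Q$ linear in $z$ once the critical values $(c_{21}^0,c_{31}^0)$ are substituted. Setting $z=i\omega$ with $\omega>0$ and splitting $P(i\omega)+Q(i\omega)(\cos\tau\omega-i\sin\tau\omega)=0$ into real and imaginary parts produces a linear system for $\cos\tau\omega$ and $\sin\tau\omega$. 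Writing $P(i\omega)=\zeta_1+i\zeta_2$ and $Q(i\omega)=a_0+ib_0$ matches exactly the quantities in the statement, and the identity $\sin^2+\cos^2=1$ collapses the system to $\zeta_1^2+\zeta_2^2=a_0^2+b_0^2$. A direct expansion reduces this to a biquadratic in $\omega$ whose unique positive root, with discriminant equal to $\zeta_0$, is precisely $\omega_0$ as in \cref{btdde:sm:eq:omega_0}. Dividing the two linear equations for $\sin\tau\omega$ and $\cos\tau\omega$ then delivers the tangent identity \cref{btdde:sm:eq:tan}.

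To conclude, I would check that at $\tau=0$ the characteristic equation reduces to the cubic $P(z)+Q(z)\big|_{\tau=0}$, whose three roots are $0,0,-(\mu_1+\mu_2+\mu_3)$; hence for small $\tau>0$ the spectrum outside the double zero sits strictly in the open left half-plane. Since the spectrum varies continuously with $\tau$, and no purely imaginary root other than zero can exist on $(0,\tau_0)$ by the very definition of $\tau_0$ as the smallest positive solution of \cref{btdde:sm:eq:tan}, local attractivity persists throughout that interval. The main obstacle is the algebraic bookkeeping: verifying that the expansion of $\zeta_1^2+\zeta_2^2-a_0^2-b_0^2$ in terms of $\mu_1,\mu_2,\mu_3,\tau$ and $\omega^2$ collapses exactly to the stated closed form for $\zeta_0$ under the square root. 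This is precisely the step at which the formula in \cite{dong2013bogdanov} must be patiently recomputed.
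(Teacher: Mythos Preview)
Your proposal is correct and follows essentially the same route as the paper: substitute $\lambda=i\omega$ into the characteristic equation at the critical values, separate real and imaginary parts to obtain $\zeta_1^2+\zeta_2^2=a_0^2+b_0^2$ and hence $\omega_0$, divide to obtain the tangent condition \cref{btdde:sm:eq:tan}, and then use the $\tau=0$ baseline together with continuity of the spectrum (the paper invokes \cite[Corollary~2.3]{Ruan@2001} for this step) to conclude no crossing occurs on $(0,\tau_0)$. The only additional content in the paper's version is a brief analysis of the structure of \cref{btdde:sm:eq:tan} (monotonicity of $\tau\mapsto\omega_0\tau$, behaviour of the right-hand side) to argue that $\tau_0$ actually exists, which you take as given from the statement.
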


For the numerical verification we consider, as in the simulations in
\cite[Example 1]{dong2013bogdanov}, the system \cref{btdde:eq:tri_neuron_BAM-u} with
the activation functions
\[
f_{1}(x)=\tanh(x)+0.1x^{2},\quad f_{2}(x)=f_{3}(x)=\tanh(x),
\]
and parameter values
\[
\mu_{1}=0.1,\mu_{2}=0.3,\mu_{3}=0.2,c_{12}=c_{13}=1,\tau=5.
\]
Then, from \cref{btdde:lem:BAM_double_eigenvalue}, we obtain two critical
values 
\[
(c_{21}^{0},c_{31}^{0})=(0.36,-0.22),
\]
at which there is a transcritical Bogdanov--Takens point. Furthermore, since
$\tau < \tau_0 \approx 5.4320$ the center manifold is attractive. In fact, we
will show in the
\ifthesis%
    \cref{chapter:BT_DDE_supplement} %
\fi%
\ifsiam%
    \hyperref[mysupplement]{online Supplement} %
\fi%
\ifarxiv%
    \hyperlink{mysupplement}{Supplement} %
\fi
that the center manifold is attractive for
$0<\tau<13.2309348879375$. We write the system \cref{btdde:eq:tri_neuron_BAM-u}
as
\begin{equation}
\label{btdde:eq:tri_neuron_BAM-u-1}
\begin{cases}
\dot{u}_{1}(t) & =-\mu_{1}u_{1}(t)+\left(c_{21}^{0}+\alpha_{1}\right)f_{1}(u_{2}(t-\tau))+\left(c_{31}^{0}+\alpha_{2}\right)f_{1}(u_{3}(t-\tau)),\\
\dot{u}_{2}(t) & =-\mu_{2}u_{2}(t)+c_{12}f_{2}(u_{1}(t)),\\
\dot{u}_{3}(t) & =-\mu_{3}u_{3}(t)+c_{13}f_{3}(u_{1}(t)),
\end{cases}
\end{equation}
where $(\alpha_{1},\alpha_{2})$ are the new parameter values such
that at $(\alpha_{1},\alpha_{2})=(0,0)$ we have a Bogdanov-Takens
bifurcation. The critical normal form coefficients 
\[
(a,b)\approx(0.0012,-0.0135),
\]
indicate stable cycles. In
\cref{btdde:fig:triNeuronBAMNeuralNetworkModelCompareParameters}, we have plotted the
local unfolding of the transcritical Bogdanov-Takens bifurcation using our
implementation in DDE-BifTool to start the continuation of the transcritical,
Hopf, and homoclinic codimension one bifurcation curves. Furthermore, using the
ability to detect codimension two bifurcation points while continuing Hopf
points, we discovered another Bogdanov--Takens point. From the transversality
conditions, we see the secondary Bogdanov--Takens point is of the generic case.
Thus, we can start continuation of the homoclinic orbits emanating from this
point as well. We see that the numerically continued homoclinic curves in the
upper half plane only exists in a very small parameter region. Without our
predictor, this homoclinic curve would be extremely difficult to locate. Lastly,
in \cref{btdde:fig:convergencePlotsTranscritical} the first and third-order
homoclinic asymptotics are compared. Here we again see expected improvements of using the
third-order homoclinic asymptotics. We refer to the
\ifthesis%
    \cref{chapter:BT_DDE_supplement} %
\fi%
\ifsiam%
    \hyperref[mysupplement]{online Supplement} %
\fi%
\ifarxiv%
    \hyperlink{mysupplement}{Supplement} %
\fi to see the beautiful
homoclinic orbits along the homoclinic curve in the lower half plane, and for
an overall more detailed treatment of this example. In fact, we will show that
the transcritical Bogdanov--Takens and generic Bogdanov--Takens points are
connected, not only by a Hopf curve, but also through a homoclinic curve. 

\section{Concluding remarks}
We have provided explicit formulas needed to initialize the codimension one
equilibrium and homoclinic bifurcation curves emanating from the generic and transcritical codimension two Bogdanov--Takens bifurcation points in classical
DDEs. Applications to four different models from the literature are given, confirming the
correctness of the derivation of the time-reparametrization
parameter-dependent
center manifold transformation and the codimension one asymptotics.

By extending the normalization technique to include the time-reparametrization we
are allowed to use orbital normal forms, instead of only smooth normal forms.
One benefit of this approach is the reusability of the codimension one curves
emanating from the universal unfolding of the Bogdanov--Takens codimension two
bifurcation. Indeed, by a simple transformation \cref{btdde:eq:blowup}, we obtain the
homoclinic asymptotics for the transcritical Bogdanov--Takens bifurcation.

In this \paper{}, we have restricted to the class of classical DDEs, and for the
applications to the class of discrete DDEs. However, the proof in
\cite{Switching2019} of the existence of a smooth parameter-dependent center
manifold is given in the general context of perturbation theory for dual
semigroups (sun-star calculus). Therefore, the applicability of this result
extends beyond classical DDEs. For example, in \cite{VanGils2013} and
\cite{Dijkstra2015} the technique was used to calculate the critical normal
form coefficients for Hopf and Hopf-Hopf bifurcations occurring in neural field
models with propagation delays. For these models, sun-reflexivity is lost, which
is typical for delay equations in abstract spaces or with infinite delay.
However, it is often possible to overcome this functional analytic
complication, so dual perturbation theory can still be employed successfully
\cite{Diekmann2008,Diekmann2012blending,VanGils2013,Janssens2019}. It follows
that the derived coefficients in
\cref{btdde:sec:parameter-dependent-center-manifold-reduction} are valid in these
settings as well.

Similarly, in \cite{Sieber@2017} it is demonstrated that formally the
normalization method still works for state-dependent delay differential
equations. However, to employ our formulas in this situation, one first needs
to implement the continuation of homoclinic orbits for state-dependent DDEs. Of
course, the asymptotics are still useful to see where     the homoclinic orbit
should be located. Furthermore, the asymptotics can be used to numerically
approximate the homoclinic solutions by periodic orbits with a large period.

Returning to the setting of classical DDEs, the most obvious next challenge is
to derive normal forms for bifurcations of periodic orbits by generalizing
\cite{Kuznetsov2005,DeWitte2013,DeWitte2014}. The resulting formulas can then
be implemented in \DDEBIFTOOL to facilitate numerical bifurcation analysis of
periodic orbits in supported types of classical DDEs.


\ifarxiv
\clearpage
\hypertarget{mysupplement}{}
\begin{mytitle}
    \title{\textsc{supplementary materials for:}\\ \TheTitle}
    \pdfbookmark[0]{Supplement}{supplement}
    \maketitle
\end{mytitle}
\thispagestyle{plain}

\ResetCounters

In this supplement, we will provide a full walk-through of the examples given
in \cref{btdde:sec:Examples} with the open source bifurcation software package
\DDEBIFTOOL\footnote{\url{http://ddebiftool.sourceforge.net/}}
\cite{2014arXiv1406.7144S}. Additionally, the Julia code, used for the
numerical simulation, with the package {\tt
DifferentialEquations.jl}\footnote{\url{https://github.com/SciML/DifferentialEquations.jl}}
\cite{rackauckas2017differentialequations} is shown. This allows other researchers to replicate the findings in the main text fully. The given code can easily be modified to study other DDE models undergoing generic or transcritical Bogdanov-Takens bifurcations. While studying these models, some new results were
obtained.

The code in this supplement has also been included into the \DDEBIFTOOL package on the source-forge repository and can be executed without the need to copy and paste. In this supplement, we mainly focus on the initialization and continuation of the various codimension one equilibrium and homoclinic bifurcation curves emanating from the Bogdanov--Takens points and on numerical simulation near the bifurcation points. The online tutorials, as well as the manual and its references, provide a comprehensive overview of \DDEBIFTOOL's capabilities and functionality.

Note that reading this supplement may feel, at times, somewhat repetitive. We see
this as a positive sign. Indeed, our predictors need little to none adjustment
before starting continuation of the codimension one curves emanating from the
generic and transcritical Bogdanov--Takens bifurcation.

To follow the examples below, we recommend using the \DDEBIFTOOL package
supplied with the article. Also, note that the \MATLAB code has been tested on 
\MATLAB 2020b and \OCTAVE 6.4.0 using \OCTAVE symbolic
package version 2.9.0. Different results may occur on other versions of 
\MATLAB or \OCTAVE.

\section[Predator-prey system with double Allee effect]
        {Generic Bogdanov--Takens bifurcation in a predator-prey system with double Allee effect}
In \cite{Jiao2021} the following predator-prey model with double Allee effect
and delay is considered
\begin{equation}
\label{sm:eq:double_alle_effect}
\begin{cases}
    \dot x(t) = \dfrac{rx}{x+n_0}\left(1-\dfrac1 K\right)\left(x - m_0\right) - \dfrac{cxy}{x+\varrho y},\\
    \dot y(t) = -dy + \dfrac{c_1 x(t-\tau)y}{x(t-\tau) + \varrho y(t-\tau)}.
\end{cases}
\end{equation}
Here, the time delay $\tau \geq 0$ is introduced due to the fact that the
reproduction of predator after consuming the prey is not instantaneous, but is
mediated by some time lag required for gestation.
The variables and parameters occurring in \cref{sm:eq:double_alle_effect}
have the following meaning:
\begin{itemize}
\item $x,y\colon \mathbb R \rightarrow \mathbb R$ denote the prey and predator population densities, respectively.
\item $r$ denotes the maximum prey population growth in absence of the Allee effect.
\item $K$ is the carrying capacity of the environment.
\item $m_0$ is the Allee threshold.
\item $n_0$ is the auxiliary parameter in order to quantify the strength of the Allee effect.
\item $c$ denotes the capturing rate of the predator.
\item $\varrho$ is the half-capturing saturation constant.
\item $c_1$ is the conversion rate of prey into predators biomass.
\item $d$ is the per capita predator mortality rate.
\end{itemize}
Following \cite{Jiao2021} let $(x,y,t) = \left(K\bar x, \frac K \varrho \bar y, \frac{\bar t}r\right)$, and immediately 
dropping the bars again for readability, then \cref{sm:eq:double_alle_effect} becomes
\begin{equation}
\label{sm:eq:double_alle_effect_rescaled}
\begin{cases}
    \dot x(t) = x \left( \dfrac{(1-x)(x-\gamma)}{x+\vartheta} - \dfrac{\alpha y}{x+y} \right), \\
    \dot y(t) = \delta y \left( -1 + \dfrac{ m x(t-\tau) }{ x(t-\tau) + y(t-\tau) }\right),
\end{cases}
\end{equation}
where $\vartheta = \frac{n_0}K, \alpha=\frac c{r\varrho}, \gamma = \frac{m_0}K, \delta = \frac dr$ and $m=\frac{c_1}d$.

Thus, $(x,y,\vartheta,\delta)=(x_0,y_0,\vartheta_0,\delta_0)$, with $\delta_0 =
\frac\alpha m$, we have a double zero eigenvalue, provided that
$\det\Delta''(0) \neq 0$. To confirm their analytical findings in \cite{Jiao2021}
numerically, the parameters $\gamma=0.15,\alpha=0.9$ and $m=1.50298303$ are
fixed and $(\vartheta,\delta)$ are taken as unfolding parameters. For these
parameters, we indeed have that $\det\Delta''(0) \neq 0$.

\begin{remark}
    The \MATLAB files for this demonstration can be found in the directory
    \mintinline[breaklines,breakafter=/]{MATLAB}{demos/tutorial/VII/neural_network_model} relative to the main
    directory of the \DDEBIFTOOL package.
\end{remark}

\subsection{Generate system files}
Before we start to analyze the system with \DDEBIFTOOL, we first create a
\emph{system file}. This file contains the definition of the system
\cref{sm:eq:double_alle_effect_rescaled}, the standard derivatives needed for
calculation of the eigenvalues and eigenvectors, the continuation of
bifurcation points, cycles, and also the multilinear forms, see \cite[Section
6]{Switching2019}, used for the calculation of the coefficients of the critical
and parameter-dependent normal forms and center manifold transformation.
Alternatively, one can only supply the system itself, see
\cref{sm:lst:wo_system_file}. Then finite difference is used to approximate the
derivatives. However, this is less efficient and less accurate, and therefore not
recommended. A separate script \mintinline{MATLAB}{gen_sym_predator_prey.m} is used to
create a system file. The most important parts of this script are listed and
discussed below.

\newcommand\pathToDDEBifToolDemos{./ddebiftooldemofiles/VII}
\inputminted[firstline=18, lastline=50]{MATLAB}{\pathToDDEBifToolDemos/predator_prey/gen_sym_predator_prey.m}
The variable \mintinline{MATLAB}{ddebiftoolpath} is directed to the \DDEBIFTOOL main
folder, which should have been extracted somewhere on the computer. Here, a path
relative to the current working directory is used. Note that although we only
use the parameters $(\theta,\delta)$ as unfolding parameters, in the current
version of \DDEBIFTOOL, we also need to include the delay(s) in the list of
parameters. After running the script, the function \mintinline{MATLAB}{dde_sym2funcs}
creates two system files \mintinline{MATLAB}{sym_predator_prey_mf.m} and \mintinline{MATLAB}{sym_predator_prey.m}.
The first file \mintinline{MATLAB}{sym_predator_prey_mf.m} implements the higher order derivatives
as multilinear forms, as explained in \cite[Section 6]{Switching2019}, and therefore will be the only file used.
The second file \mintinline{MATLAB}{sym_predator_prey.m}
uses directional derivatives to implement the higher order derivatives. The
directional derivatives approach \emph{formally} allows the use of
state-dependent delays, see \cite{Sieber@2017}. Although both approaches yield
(up to rounding errors) identical normal form coefficients and center manifold
transformations, multilinear forms are more efficient to compute.

\subsection{Loading the \DDEBIFTOOL package}
\label{sm:sec:loading_DDE-BIFTool}
Now that a system file is created, we continue with \DDEBIFTOOL to analyze
\cref{sm:eq:double_alle_effect_rescaled} numerically. The code in the following
sections highlight the important parts of the file \mintinline{MATLAB}{predator_prey.m}.
The package \DDEBIFTOOL consists of a set of \MATLAB routines. Thus, in order to start
using \DDEBIFTOOL, we only need to add \DDEBIFTOOL (sub)directories to the search
path.
\begin{listing}[!ht]
\inputminted[firstline=17, lastline=25]{MATLAB}{\pathToDDEBifToolDemos/predator_prey/predator_prey.m}
\caption{Code to add \DDEBIFTOOL scripts to the search path.}
\label{sm:lst:searchpath}
\end{listing}
There are four subdirectories added to the search path:
\par
\medskip
\begin{description}
\item[ddebiftool] Containing the core files of \DDEBIFTOOL.
\item[ddebiftool\_extra\_psol] An extension for enabling continuation of periodic orbit bifurcations for delay-differential equations with constant or state-dependent delay.
\item[ddebiftool\_extra\_nmfm] An extension for normal form computation.
\item[ddebiftool\_utilities] Containing various utilities.
\end{description}

\subsection{Set parameter names}
The following code allows us to use
\mintinline{MATLAB}{ind.theta} instead of remembering the
index of the parameter $\beta$ in the parameter array, and similarly for the
other parameters.
\inputminted[firstline=27, lastline=30]{MATLAB}{\pathToDDEBifToolDemos/predator_prey/predator_prey.m}
In this way, fewer mistakes are likely to be made, and the code is easier to read.

\subsection{Initialization}
Next, we set up the \mintinline{MATLAB}{funcs} structure, containing information about
where the system and its derivatives are stored, a function pointing to which
parameters are delays, and various other settings.
\inputminted[firstline=32, lastline=35]{MATLAB}{\pathToDDEBifToolDemos/predator_prey/predator_prey.m}
Alternatively, when no system files have been generated, one could initialize
the system \cref{sm:eq:double_alle_effect_rescaled} as in \cref{sm:lst:wo_system_file}.

\begin{listing}[!ht]
\begin{minted}{MATLAB}
%% Define funcs structure without symbolic derivatives
m = 1.502983803;
alpha = 0.9;
gamma = 0.15;
dx_dt = @(x,y,theta) x.*((1-x).*(x-gamma)./(x+theta) ...
                     - alpha.*y./(x+y))
dy_dt = @(y,xt,yt,delta) delta.*y.*(-1 + m.*xt./(xt+yt))
predator_prey_sys = @(xx,par)  ...
   [dx_dt(xx(1,1,:),xx(2,1,:),par(1,ind.theta,:)); ...
    dy_dt(xx(2,1,:),xx(1,2,:),xx(2,2,:),par(1,ind.delta,:))];
% % Set funcs structure
funcs = set_funcs('sys_rhs', predator_prey_sys, ...
    'sys_tau', @() ind.tau,...
    'x_vectorized', true, 'p_vectorized', true);
\end{minted}
\caption{Code to define the system without a system file.}
\label{sm:lst:wo_system_file}
\end{listing}
Inspecting the output of the \mintinline{MATLAB}{funcs} handle gives.
\begin{minted}{shell-session}
>> funcs

funcs =

  struct with fields:

                 sys_rhs: @(x,p)dde_wrap_rhs(x,p,funcs.sys_rhs,funcs.x_vectorized,
                            funcs.p_vectorized)
                sys_ntau: @()0
                 sys_tau: @()ind.tau
                sys_cond: @dde_dummy_cond
                sys_deri: @(x,p,nx,np,v)dde_gen_deriv(funcs.sys_dirderi,x,p,nx,np,v,1)
                sys_dtau: []
              sys_mfderi: {}
             sys_dirderi: {@(x,p,dx,dp)dde_dirderiv(derivbase,x,p,dx,dp,
                            order-ldirderi, 'nf',size(x,1),'hjac',
                            funcs.hjac(order))  [function_handle]}
             sys_dirdtau: []
            x_vectorized: 1
            p_vectorized: 1
                    hjac: @(ord)eps^(1/(2+ord))
      sys_cond_reference: 0
              lhs_matrix: @(sz)lhs_matrix(sz,funcs.lhs_matrix)
                  tp_del: 0
       sys_deri_provided: 0
    sys_dirderi_provided: 0
\end{minted}
The output shows that no derivative file is supplied. In this case, the
derivatives are calculated using finite-difference approximations with the
function \mintinline{MATLAB}{dde_dirderiv}. Again, we do not recommend using the latter
approach. However, it can be useful for debugging purposes.

\subsection{Set parameter range}
Since we are only interested here in the local unfolding, we restrict the
allowed parameter range for the unfolding parameters. In practice, one may have
physical restrictions which must be satisfied. Additionally, we also limit the
maximum allowed step size during continuation. By doing so, we obtain more refined
data to compare against our predictors.
\inputminted[firstline=37, lastline=40]{MATLAB}{\pathToDDEBifToolDemos/predator_prey/predator_prey.m}

\subsection{Stability and coefficients of the generic Bogdanov--Takens point}
We manually construct a steady-state at the Bogdanov--Takens point derived in
\cref{btdde:sec:example:predator_prey}, see also \cite{Jiao2021}.
\inputminted[firstline=42, lastline=55]{MATLAB}{\pathToDDEBifToolDemos/predator_prey/predator_prey.m}
Inspecting the \mintinline{MATLAB}{stst.stability} structure yields
\begin{minted}{shell-session}
>> stst.stability.l1

ans =

   1.0e-06 *

   0.197590320692559
  -0.197590756777676

>> 
\end{minted}

The eigenvalues confirm that the point under consideration is indeed (an
approximation to) a Bogdanov--Takens point. Next, we convert the steady-state
point to a Bogdanov--Takens point and calculate the normal form coefficients
with the function \mintinline{MATLAB}{nmfm_bt_orbital}, which implements the coefficients
derived in \cref{btdde:sec:generic_bogdanov-takens}. For this, we need to set the argument
\mintinline{MATLAB}{free_pars} to the unfolding parameter $(\theta,\delta)$. These
coefficients will be used to start the continuation of the codimension one branches
emanating from the Bogdanov--Takens point.
\inputminted[firstline=57, lastline=60]{MATLAB}{\pathToDDEBifToolDemos/predator_prey/predator_prey.m}
The coefficients for the normal form, the time-reparametrization, and the center
manifold transformation coefficients are stored in the \mintinline{MATLAB}{bt.nmfm}
structure. Additionally, also the approximation to the parameters and center
manifold transformation is given, which is used for the predictors.
\begin{minted}{shell-session}
>> bt.nmfm

ans =

  struct with fields:

          a: -0.145177185481861
          b: -1.446000370122628
  theta1000: -0.200700969579673
  theta0001: 5.128950206684711
        K10: [2x1 double]
        K01: [2x1 double]
        K02: [2x1 double]
        K11: [2x1 double]
        K03: [2x1 double]
       phi0: [1x1 struct]
       phi1: [1x1 struct]
      h0010: [1x1 struct]
      h0001: [1x1 struct]
      h2000: [1x1 struct]
      h1100: [1x1 struct]
      h0200: [1x1 struct]
      h1010: [1x1 struct]
      h1001: [1x1 struct]
      h0110: [1x1 struct]
      h0101: [1x1 struct]
      h0002: [1x1 struct]
      h0011: [1x1 struct]
      h3000: [1x1 struct]
      h2100: [1x1 struct]
      h1101: [1x1 struct]
      h2001: [1x1 struct]
      h0003: [1x1 struct]
      h1002: [1x1 struct]
      h0102: [1x1 struct]
          K: @(beta1,beta2)K10*beta1+K01*beta2+1/2*K02*beta2.^2
                +K11*beta1.*beta2+1/6*K03*beta2^3
          H: [function_handle]
\end{minted}

Since the sign of $ab$ is positive, we expect to find unstable periodic orbits nearby the 
Bogdanov--Takens point.

\subsection{Comparing profiles of computed and predicted homoclinic orbits}
To test the homoclinic asymptotics from
\cref{btdde:sec:generic_bt_homoclinic_asymptotics} we compare the first and third
order asymptotics to the Newton corrected solution. For this, we use the 
function \mintinline[breaklines,breakafter=_]{MATLAB}{C1branch_from_C2point}. This function returns a branch, which
by default returns two initial corrected approximations in order to start continuation of the
codimension one curve under consideration. By setting the argument
\mintinline{MATLAB}{'predictor'} to \mintinline{MATLAB}{true} the approximations are left uncorrected.
To make the comparison visually clear, we set the perturbation parameter 
$\epsilon=0.3$ (\mintinline{MATLAB}{step = 0.3} in the code below).
The code below produces \cref{sm:fig:DoubleAlleeEffectCompareProfiles}.
The difference between the two approximations is clearly noticeable. While
the first order asymptotics is close to the Newton corrected solution, the third
order asymptotic is indistinguishable at this scale from the Newton corrected
solution.
\inputminted[firstline=62, lastline=80]{MATLAB}{\pathToDDEBifToolDemos/predator_prey/predator_prey.m}
\begin{figure}[ht!]
    \centering
    \includegraphics{\imagedir/DoubleAlleeEffectCompareProfiles.pdf}
    \caption{Comparison between the first and third-order asymptotics from
    \cref{btdde:sec:generic_bt_homoclinic_asymptotics} near the generic
        Bogdanov--Takens bifurcation in \cref{sm:eq:double_alle_effect_rescaled} with the
        perturbation parameter set to $\epsilon=0.3$.}
    \label{sm:fig:DoubleAlleeEffectCompareProfiles}
\end{figure}

\subsection{Continuation of the emanating codimension one curves}
To continue the three codimension one curves emanating from the generic
Bogdanov--Takens point, we can simply use the function
\mintinline[breaklines,breakafter=_]{MATLAB}{C1branch_from_C2point}, as shown in the code below. To monitor the
continuation process, the argument \mintinline{MATLAB}{plot} must be set to \mintinline{MATLAB}{1}.
The most important setting is the perturbation parameter (or multiple),
\mintinline{MATLAB}{step} in the code below. If left out, default step sizes are defined.
However, depending on the problem, no convergence may then be obtained.
\inputminted[firstline=82, lastline=100]{MATLAB}{\pathToDDEBifToolDemos/predator_prey/predator_prey.m}

\subsection{Predictors of the codimension one curves emanating from the Bogdanov--Takens point}
Before we provide the bifurcation diagram in the next section, we first obtain
the predictors for the codimension one curves. For this, we again use the
function \mintinline[breaklines,breakafter=_]{MATLAB}{C1branch_from_C2point}. We set the argument
\mintinline{MATLAB}{predictor} to \mintinline{MATLAB}{1} and provide a range of
perturbation parameters.
\inputminted[firstline=118, lastline=137]{MATLAB}{\pathToDDEBifToolDemos/predator_prey/predator_prey.m}
In the last part of the code above, we added the asymptotics obtained from \cite{Jiao2021}.

\subsection{Bifurcation diagram}
The code below produces a similar figure as \cref{sm:fig:DoubleAlleeEffectCompareParameters} in \MATLAB.
\inputminted[firstline=139, lastline=164]{MATLAB}{\pathToDDEBifToolDemos/predator_prey/predator_prey.m}
\begin{figure}[ht]
    \centering
    \includegraphics{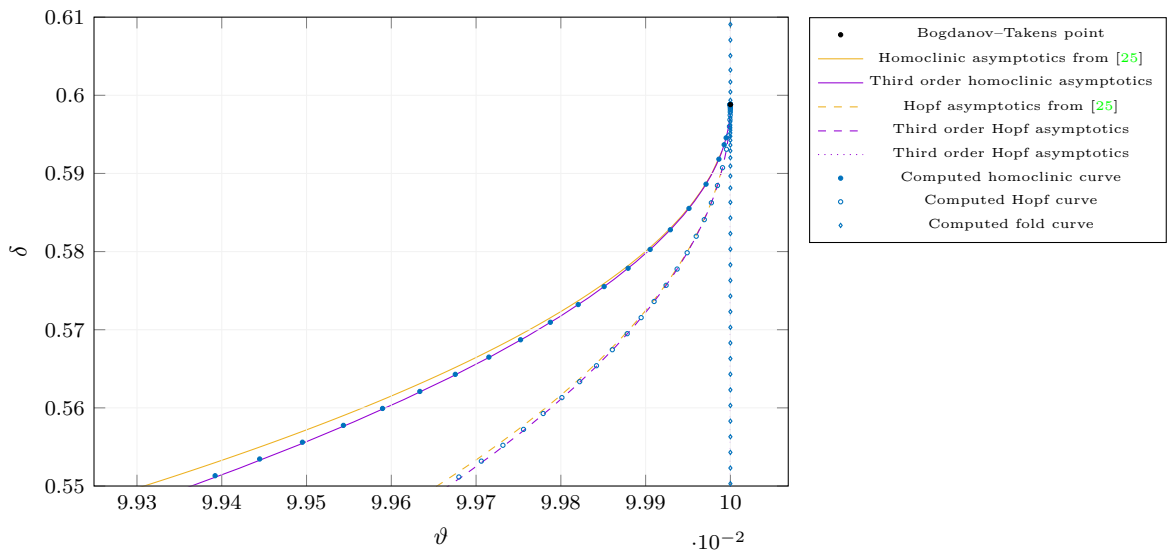}
    \caption{Bifurcation diagram near the analytically derived generic Bogdanov-Takens point in
        \cref{sm:eq:double_alle_effect_rescaled} comparing computed codimension one
    curves using \DDEBIFTOOL with the asymptotics obtained in this \paper{} and in \cite{Jiao2021}.}
    \label{sm:fig:DoubleAlleeEffectCompareParameters}
\end{figure}

\subsection{Compare homoclinic solutions in phase-space}
To get an impression of the third-order homoclinic asymptotics in
phase-space, we compare the corrected and uncorrected homoclinic solutions
with the perturbation parameter ranging from $0.1$ to $0.3$.
The code below results in \cref{sm:fig:DoubleAlleeEffectCompareOrbitsPhaseSpace}.
We see that the corrected and predicted homoclinic orbits are nearly identical.
\inputminted[firstline=166, lastline=182]{MATLAB}{\pathToDDEBifToolDemos/predator_prey/predator_prey.m}
The \MATLAB console shows the following output.
\begin{minted}{shell-session}
hcli from BT: branch 1 of  1 correction of point 1, success=1
hcli from BT: branch 1 of  1 correction of point 2, success=1
hcli from BT: branch 1 of  1 correction of point 3, success=1
hcli from BT: branch 1 of  1 correction of point 4, success=1
hcli from BT: branch 1 of  1 correction of point 5, success=1
hcli from BT: branch 1 of  1 correction of point 6, success=1
hcli from BT: branch 1 of  1 correction of point 7, success=1
hcli from BT: branch 1 of  1 correction of point 8, success=1
hcli from BT: branch 1 of  1 correction of point 9, success=1
hcli from BT: branch 1 of  1 correction of point 10, success=1
\end{minted}
That is, all predictions in this range are successfully corrected.
\begin{figure}[ht]
    \centering
    \includetikzscaled{DoubleAlleeEffectCompareOrbitsPhaseSpace}
    \caption{Plot comparing the third-order homoclinic asymptotics from
    \cref{btdde:sec:generic_bt_homoclinic_asymptotics} near the generic
    Bogdanov--Takens bifurcation in \cref{sm:eq:double_alle_effect_rescaled} with
    the Newton correct homoclinic solutions in $(x,y)$ phase-space.}
    \label{sm:fig:DoubleAlleeEffectCompareOrbitsPhaseSpace}
\end{figure}

\subsection{Convergence plot}
In \cref{sm:fig:DoubleAlleeEffectCompareProfiles} we compared the profiles of
the first and third-order homoclinic asymptotics. Although the improvement of
the third-order asymptotics is clearly visible, a better way to numerically
compare the different orders is by creating a log-log convergence plot. 
Since we create convergence plots for all examples treated in this supplement,
we created the function \mintinline{MATLAB}{convergence_plot}.
\begin{code}
\inputminted{MATLAB}{\pathToDDEBifToolDemos/convergence_plot.m}
\label{sm:lst:convergence_plot}
\caption{Auxiliary function for creating convergence plots.}
\end{code}
Using this function, the code below yields \cref{sm:fig:DoubleAlleeEffectConvergencePlot}.
\inputminted[firstline=184, lastline=195]{MATLAB}{\pathToDDEBifToolDemos/predator_prey/predator_prey.m}
\begin{figure}[ht]
    \centering
    \ifcompileimages%
        \tikzsetnextfilename{DoubleAlleeEffectConvergencePlot}%
        \input{tikz/DoubleAlleeEffectConvergencePlot}%
    \else
        \includegraphics{\imagedir/DoubleAlleeEffectConvergencePlot}
    \fi

    \caption{On the abscissa is the approximation to the amplitude $A_0$ and on the ordinate the
        relative error $\delta$ between the constructed solution
        \mintinline{MATLAB}{hcli_pred} to the defining system for the homoclinic orbit and
        the Newton corrected solution \mintinline{MATLAB}{hcli_corrected}.}
    \label{sm:fig:DoubleAlleeEffectConvergencePlot}
\end{figure}

\subsection{Simulation with {\tt DifferentialEquations.jl}}
\begin{figure}[ht!]
    \centering
    \includegraphics{\imagedir/doubleAlleeEffectSimulation.pdf}
    \caption{Bifurcation diagram near the derived generic Bogdanov-Takens
        point in \cref{sm:eq:neural_network}. In the center, we plotted the
        computed codimension one curves emanating from the Bogdanov--Takens
        point using \DDEBIFTOOL with the third-order homoclinic asymptotics
        obtained in \cref{btdde:sec:generic_bt_homoclinic_asymptotics}. The simulations
        surrounding the center plot have been performed in Julia.}
    \label{sm:fig:double_alle_effect-bifurcation-diagram}
\end{figure}
We finish this demonstration by simulating the dynamics near the generic
Bogdanov--Takens point. In \cref{sm:fig:double_alle_effect-bifurcation-diagram}
we created the full local unfolding of the singularity. Note
that, compared with \cite{Jiao2021}, the simulation is done with the original
delay differential equations \cref{sm:eq:double_alle_effect_rescaled} and not
with the ordinary differential equations of the reduced system on the center
manifold. We are able to do this since the parameter-dependent center manifold
is locally attractive. In order to integrate the system in the reverse direction, i.e.,
to obtain the orbits in the stable manifold of the equilibria, we multiplied 
the right-hand side of the system in \cref{sm:eq:double_alle_effect_rescaled} by $-1$.
Note that, in general, this will not provide an accurate approximation at all.
However, since the delay is relatively small, this approximation is accurate
enough for our application. Also, note that the Bogdanov--Takens point still
exists for the approximate system. Nonetheless, even without the backward solutions,
the bifurcation diagram shows that the numerical analysis obtained in
\DDEBIFTOOL is correct.

Since the code for creating the local unfolding diagram is
rather long, we show the code for reproducing the plot simulating the system
near the homoclinic orbit. The code for creating the bifurcation diagram in
\cref{sm:fig:double_alle_effect-bifurcation-diagram} can be found in the
GitHub repository.

\subsubsection{Loading necessary Julia packages}
We start by loading the necessary packages. These are
\begin{itemize}
    \item {\tt DifferentialEquations.jl} A suite for numerically solving differential equations written in Julia.
    \item {\tt GLMakie.jl} For high-level plotting on the GPU.
    \item {\tt NonlinearEigenproblems.jl} A nonlinear eigenvalue problem determine a scalar $\lambda$ and a vector $v$ such that $M(\lambda)v=0$. In our case, the matrix $M(\lambda)$ will be the characteristic matrix.
    \item {\tt DDEBifTool.jl} We created this very minimalistic package to have some functionality for normal form calculations of DDEs in Julia. Here we use it to calculate the derivatives of the system necessary for {\tt NonlinearEigenproblems.jl}.
    \item {\tt DelimitedFiles.jl} Reading and writing of CSV files.
    \item {\tt PGFPlotsX.jl} A Julia package for creating publication quality figures using the LaTeX library PGFPlots as the backend.
\end{itemize}
\newcommand\pathToJuliaFiles{simulation}
\inputminted[firstline=1, lastline=8]{julia}{\pathToJuliaFiles/predator_prey_simulation_article.jl}

\subsubsection{Define system}
Next we define the system to be integrated, a system to approximate the reverse
flow, and also an allocating version used for stability calculations.
\inputminted[firstline=10, lastline=38]{julia}{\pathToJuliaFiles/predator_prey_simulation_article.jl}

\subsubsection{Functions for plotting arrows} \label{sm:eq:arrow_functions}
We define a function to show in which direction the orbits flow, which is
useful when plotting in phase-space. We also define a function to show the
direction of the leading eigenvectors of the characteristic matrix.

\begin{code}
\inputminted[firstline=40, lastline=71]{julia}{\pathToJuliaFiles/predator_prey_simulation_article.jl}
\caption{Functions for plotting arrows.}
\label{sm:lst:arrow_fucntions}
\end{code}

\subsubsection{Define parameters, equilibria}
We define parameters located on the continued homoclinic branch with
\DDEBIFTOOL. Then define the non-trivial equilibria points in
\cref{sm:eq:double_alle_effect_rescaled}, which can be derived analytically.
\inputminted[firstline=73, lastline=80]{julia}{\pathToJuliaFiles/predator_prey_simulation_article.jl}

\subsubsection{Plot equilibria and homoclinic orbit}
By plotting the homoclinic orbit obtained with \DDEBIFTOOL, we can compare with the numerical simulations. 
\inputminted[firstline=82, lastline=91]{julia}{\pathToJuliaFiles/predator_prey_simulation_article.jl}

\subsubsection{Eigenvectors}
Next, we calculate and plot the leading eigenvectors of the characteristic matrix at the saddle-node bifurcation point.
\inputminted[firstline=93, lastline=105]{julia}{\pathToJuliaFiles/predator_prey_simulation_article.jl}

\subsubsection{Define callback}
Since we are only interested in the flow near the equilibria points, we create a
discrete callback to ensure the orbits do not become too large.
\inputminted[firstline=107, lastline=110]{julia}{\pathToJuliaFiles/predator_prey_simulation_article.jl}

\subsubsection{Integrate the system}
Now we define the problem to be integrated and choose the algorithm to be used.
The first of the five numerical simulations below starts near the unstable
eigendirection. We rotated the eigenvector slightly to follow the homoclinic
orbit very close. The rotation value of $\alpha_0$ was actually obtained by using
the bisection method. However, we did not include this code here. 
\inputminted[firstline=112, lastline=147]{julia}{\pathToJuliaFiles/predator_prey_simulation_article.jl}

\subsubsection{Finish the plot}
Lastly, we add arrows to the obtained solutions using the function
\mintinline{julia}{draw_arrow_on_solution} defined above. Also, we add the
legend and re-plot the equilibria, so that they appear on top.
\inputminted[firstline=149, lastline=161]{julia}{\pathToJuliaFiles/predator_prey_simulation_article.jl}
We should now obtain an interactive figure similar to the left figure in \cref{sm:fig:doubleAlleeEffectHomoclinicSimulation}.
\begin{figure}[!ht]
    \centering
    \includegraphics{\imagedir/doubleAlleeEffectHomoclinicSimulation.pdf}
    \caption{Integration of the delayed predator-prey model
        \cref{sm:eq:double_alle_effect_rescaled} at parameter values $(\theta,
        \delta) = (0.094448552842823, 0.447783343351055)$ obtained from
        continuation of the homoclinic curve emanating from the
        Bogdanov--Takens point using \DDEBIFTOOL. In the plot to the right
        a close-up near the saddle is given. Additionally, the 
        leading eigenvectors of the characteristic matrix
        are shown.}
    \label{sm:fig:doubleAlleeEffectHomoclinicSimulation}
\end{figure}

\section[Bogdanov--Takens bifurcation in a neural network model]
        {Generic Bogdanov--Takens bifurcation in a neural network model}

In this example, we will consider the model 
\begin{equation}
\label{sm:eq:neural_network}
\begin{cases}
\mu\dot{u}_1(t) = -u_1(t) + q_{11}\alpha(u_1(t\text{-}T))-q_{12}u_2(t\text{-}T) + e_1,\\
\mu\dot{u}_2(t) = -u_2(t) + q_{21}\alpha(u_1(t\text{-}T))-q_{22}u_2(t\text{-}T) + e_2,
\end{cases}
\end{equation}
which describes the dynamics of a neural network consisting of
excitatory and inhibitory neurons \cite{giannakopoulos2001bifurcations}.
The variables and parameters occurring in \cref{sm:eq:neural_network}
have the following neurophysiological meaning:
\begin{itemize}
\item $u_1,u_2:\mathbb{R}\rightarrow\mathbb{R}$ denote the total post-synaptic
potential of the excitatory and inhibitory neurons, respectively.
\item $\mu>0$ is a time constant characterizing the dynamical properties
of the cell membrane.
\item $q_{ik}\geq0$ represent the strength of the connection line from
the $k$th neuron to the $i$th neuron.
\item $\alpha:\mathbb{R}\rightarrow\mathbb{R}$ is the transfer function
which describes the activity generation of the excitatory neuron as
a function of its total potential $u_1$. The function $\alpha$
is smooth, increasing and has an unique turning point at $u_1 = \theta$.
The transfer function corresponding to the inhibitory neuron is assumed
to be the identity.
\item $T\geq0$ is a time delay reflecting synaptic delay, axonal and dendritic
propagation time.
\item $e_1$ and $e_2$ are external stimuli acting on the excitatory
and inhibitory neuron, respectively.
\end{itemize}

Following \cite{giannakopoulos2001bifurcations}, we consider equation \cref{sm:eq:neural_network} with
\begin{align*}
\alpha(u_1) & = \frac{1}{1 + e^{-4u_1}}-\frac{1}{2},\qquad q_{11} = 2.6,\qquad q_{21} = 1.0,\qquad q_{22} = 0.0,\\
\mu & = 1.0,\qquad T = 1.0,\qquad e_2 = 0.0,
\end{align*}
and $Q: = q_{12},\,E: = e_1$ as bifurcation parameters. Substituting
into \cref{sm:eq:neural_network} yields
\begin{equation}
\label{sm:eq:neural_network_subs}
\begin{cases}
\dot{u}_1(t) = -u_1(t) + 2.6\alpha(u_1(t - 1))-Qu_2(t - 1) + E,\\
\dot{u}_2(t) = -u_2(t) + \alpha(u_1(t - 1)).
\end{cases}
\end{equation}
Notice that for any steady-state we have the symmetry
\begin{equation}
\label{sm:eq:neuralNetworkSymmetry}
    (u_1,u_2,E)\rightarrow(-u_1,-u_2,-E).
\end{equation}
It is easy to explicitly derive that
the system has a double eigenvalue zero for
\begin{equation}
\left\{
\begin{aligned}
    u_1(t) &= \frac14 \log\left(\frac{8 - \sqrt{39}}5\right) \approx -0.2617, \\
    u_2(t) &= -\frac12 \sqrt{\frac{3}{13}} \approx -0.2402, \\
    Q &= \frac{13}{10}, \\
    E &= \frac{\sqrt{39} - 10\atanh \sqrt{\frac{3}{13}}}{20} \approx 0.0505.
\end{aligned}
\right.
\end{equation}

\begin{remark} 
    The \MATLAB files for this demonstration can be found in the directory
    \mintinline[breaklines,breakafter=/]{MATLAB}{demos/tutorial/VII/neural_network_model} relative to the main
    directory of the \DDEBIFTOOL package. Here, we omit the code to generate the
    system file. We assume that the system file
    \mintinline{MATLAB}{sym_neural_network_mf.m} has been
    generated with the script \mintinline{MATLAB}{sym_neural_network.m}. Also, we assume
    that the \DDEBIFTOOL package has been loaded as in
    \cref{sm:lst:searchpath}. The code in
    \crefrange{sm:sec:neural_network_model:pars_and_funcs}{sm:sec:neural_network_model:bifurcation_diagramII}
    highlights the important parts of the file
    \mintinline{MATLAB}{neural_network_model.m}. 
\end{remark}

\subsection{Set parameter names and funcs structure} 
\label{sm:sec:neural_network_model:pars_and_funcs}
As in the previous example, we set the parameter names and define the \mintinline{MATLAB}{funcs} structure.
\inputminted[firstline=28, lastline=39]{MATLAB}{\pathToDDEBifToolDemos/neural_network_model/neural_network_model.m}

\subsection{Set parameter range}
Since we are only interested here in the local unfolding, we restrict the
allowed parameter range for the unfolding parameters. In practice, one may have
physical restrictions which must be satisfied. Additionally, we also limit the
maximum allowed step size during continuation. By doing so, we obtain more refined
data to compare against our predictors.
\inputminted[firstline=41, lastline=44]{MATLAB}{\pathToDDEBifToolDemos/neural_network_model/neural_network_model.m}

\subsection{Stability and coefficients of the generic Bogdanov--Takens point}
We manually construct a steady-state at the generic Bogdanov--Takens point.
\inputminted[firstline=46, lastline=56]{MATLAB}{\pathToDDEBifToolDemos/neural_network_model/neural_network_model.m}
The \MATLAB console shows the following output.
\begin{minted}{shell-session}
ans =

   1.0e-07 *

   0.548156278544666
  -0.548156314155979
\end{minted}
The eigenvalues confirm that the point under consideration is indeed (an
approximation to) a Bogdanov--Takens point. Furthermore, the remaining eigenvalues have
negative real parts. Next, we calculate the normal form coefficients, the
time-reparametrization, and the transformation to the center manifold with the
function \mintinline{MATLAB}{nmfm_bt_orbital}, which implements the coefficients as derived in
\cref{btdde:sec:generic_bogdanov-takens}. For this, we need to set the argument
\mintinline{MATLAB}{free_pars} to the unfolding parameter $(Q,E)$. These
coefficients will be used to start the continuation of the codimension one branches
emanating from the Bogdanov--Takens point.
\inputminted[firstline=58, lastline=62]{MATLAB}{\pathToDDEBifToolDemos/neural_network_model/neural_network_model.m}
The \MATLAB console shows the following output.
\begin{minted}{shell-session}
ans =

  struct with fields:

          a: -0.190382124055415
          b: -0.951910620277072
  theta1000: -0.546026508575597
  theta0001: 1.473611111111115
        K10: [2x1 double]
        K01: [2x1 double]
        K02: [2x1 double]
        K11: [2x1 double]
        K03: [2x1 double]
       phi0: [1x1 struct]
       phi1: [1x1 struct]
      h0010: [1x1 struct]
      h0001: [1x1 struct]
      h2000: [1x1 struct]
      h1100: [1x1 struct]
      h0200: [1x1 struct]
      h1010: [1x1 struct]
      h1001: [1x1 struct]
      h0110: [1x1 struct]
      h0101: [1x1 struct]
      h0002: [1x1 struct]
      h0011: [1x1 struct]
      h3000: [1x1 struct]
      h2100: [1x1 struct]
      h1101: [1x1 struct]
      h2001: [1x1 struct]
      h0003: [1x1 struct]
      h1002: [1x1 struct]
      h0102: [1x1 struct]
          K: @(beta1,beta2)K10*beta1+K01*beta2+1/2*K02*beta2.^2
                +K11*beta1.*beta2+1/6*K03*beta2^3
          H: [function_handle]
\end{minted}
Since the sign of $ab$ is positive, we expect to find unstable periodic orbits nearby the 
Bogdanov--Takens point.

\subsection{Comparing profiles of computed and predicted homoclinic orbits}
To test the homoclinic asymptotics from
\cref{btdde:sec:generic_bt_homoclinic_asymptotics} we compare the first and third
order asymptotics to the Newton corrected solution. For this, we use the 
function \mintinline[breaklines,breakafter=_]{MATLAB}{C1branch_from_C2point}. This function returns a branch, which
by default returns two initial corrected approximations in order to start continuation of the
codimension one curve under consideration. By setting the argument
\mintinline{MATLAB}{'predictor'} to \mintinline{MATLAB}{true} the approximations are left uncorrected.
To make the comparison visually clear, we set the perturbation parameter 
$\epsilon=0.25$ (\mintinline{MATLAB}{step = 0.25} in the code below).
The code below produces \cref{sm:fig:NeuralNetworkCompareProfiles}.
The difference between the two approximations is clearly noticeable. While
the first order asymptotics is close to the Newton corrected solution, the third
order asymptotics is indistinguishable at this scale from the Newton corrected
solution.
\inputminted[firstline=64, lastline=82]{MATLAB}{\pathToDDEBifToolDemos/neural_network_model/neural_network_model.m}
\begin{figure}[ht!]
    \includegraphics{\imagedir/NeuralNetworkCompareProfiles.pdf}
    \caption{Comparison between the first and third-order asymptotics from
    \cref{btdde:sec:generic_bt_homoclinic_asymptotics} near the generic
        Bogdanov--Takens bifurcation in \cref{sm:eq:neural_network} with the
        perturbation parameter set to $\epsilon=0.25$.}
    \label{sm:fig:NeuralNetworkCompareProfiles}
\end{figure}

\label{sm:sec:neural_network_model:continuation}
\subsection{Continuation of the codimension one curves emanating}
To continue the three codimension one curves emanating from the generic
Bogdanov--Takens point, we can simply use the function
\mintinline[breaklines,breakafter=_]{MATLAB}{C1branch_from_C2point}, as shown in the code below. To monitor the
continuation process, the argument \mintinline{MATLAB}{plot} must be set to \mintinline{MATLAB}{1}.
The most important setting is the perturbation parameter (or multiple),
\mintinline{MATLAB}{step} in the code below. If left out, default step sizes are defined.
However, depending on the problem, no convergence may then be obtained.
\inputminted[firstline=84, lastline=102]{MATLAB}{\pathToDDEBifToolDemos/neural_network_model/neural_network_model.m}

\subsection{Predictors of the codimension one curves emanating from the Bogdanov--Takens point}
Before we provide the bifurcation diagram in the next section, we first obtain the predictors
for the codimension one curves. For this, we again use the function
\mintinline[breaklines,breakafter=_]{MATLAB}{C1branch_from_C2point}. We set the argument \mintinline{MATLAB}{predictor} to \mintinline{MATLAB}{1}
and provide a range of perturbation parameters.
\inputminted[firstline=121, lastline=133]{MATLAB}{\pathToDDEBifToolDemos/neural_network_model/neural_network_model.m}
In the last part of the code above, we added the asymptotics obtained from \cite{Jiao2021}.

\subsection{Bifurcation diagram}
The code below produces (a figure similar to) \cref{sm:fig:NeuralNetworkCompareParameters}.
\inputminted[firstline=135, lastline=156]{MATLAB}{\pathToDDEBifToolDemos/neural_network_model/neural_network_model.m}
\begin{figure}[ht]
    \centering
    \includetikzscaled{NeuralNetworkCompareParameters}
    \caption{Bifurcation diagram near the derived generic Bogdanov-Takens point in
        \cref{sm:eq:neural_network} comparing computed codimension one curves using
        \DDEBIFTOOL with the third-order homoclinic parameter asymptotics obtained
        in \cref{btdde:sec:generic_bt_homoclinic_asymptotics}.}
    \label{sm:fig:NeuralNetworkCompareParameters}
\end{figure}

\subsection{Compare homoclinic solutions in phase-space}
To obtain an impression of the third-order homoclinic asymptotics in
phase-space, we compare the corrected and uncorrected homoclinic solutions
with the perturbation parameter ranging from $0.1$ to $0.3$.
The code below results in \cref{sm:fig:NeuralNetworkCompareOrbitsPhaseSpace}.
We see that the corrected and predicted homoclinic orbits are nearly identical.
\inputminted[firstline=158, lastline=174]{MATLAB}{\pathToDDEBifToolDemos/neural_network_model/neural_network_model.m}
\begin{figure}[ht]
    \centering
    \includetikzscaled{NeuralNetworkCompareOrbitsPhaseSpace}
    \caption{Plot comparing the third-order homoclinic asymptotics from
        \cref{btdde:sec:generic_bt_homoclinic_asymptotics} near the generic
        Bogdanov--Takens bifurcation in \cref{sm:eq:neural_network} with the
        Newton correct homoclinic solutions in $(u_1,u_2)$ phase-space.}
    \label{sm:fig:NeuralNetworkCompareOrbitsPhaseSpace}
\end{figure}

\subsection{Convergence plot}
Using the function from \cref{sm:lst:convergence_plot}, we create a log-log
convergence plot comparing the convergence order of the first and third order
homoclinic asymptotics from \cref{btdde:sec:generic_bt_homoclinic_asymptotics}.
The code below yields \cref{sm:fig:NeuralNetworkConvergencePlot}.
\inputminted[firstline=176, lastline=187]{MATLAB}{\pathToDDEBifToolDemos/neural_network_model/neural_network_model.m}
\begin{figure}[ht]
    \centering
    \ifcompileimages%
        \tikzsetnextfilename{NeuralNetworkConvergencePlot}%
        \input{tikz/NeuralNetworkConvergencePlot}%
    \else
        \includegraphics{\imagedir/NeuralNetworkConvergencePlot}
    \fi

        \caption{On the abscissa is the approximation to the amplitude $A_0$ and on
        the ordinate the relative error $\delta$ between the constructed solution
        \mintinline{MATLAB}{hcli_pred} to the defining system for the homoclinic orbit
        and the Newton corrected solution \mintinline{MATLAB}{hcli_corrected}.}
    \label{sm:fig:NeuralNetworkConvergencePlot}
\end{figure}

\subsection{Continuation of the codimension one curves emanating from the second Bogdanov--Takens point}
For completeness, we also continue the codimension one curves emanating from
the second Bogdanov--Takens point, which exists due to the symmetry
\cref{sm:eq:neuralNetworkSymmetry}. Of course, we could just use the symmetry
instead of computing the curves numerically. However, we use it as an
additional verification of our derived asymptotics. The code below defines the
second Bogdanov--Takens point, calculates the stability, and continues the Hopf
and homoclinic bifurcation curves.
\inputminted[firstline=189, lastline=216]{MATLAB}{\pathToDDEBifToolDemos/neural_network_model/neural_network_model.m}

\subsection{Bifurcation diagram with two Bogdanov--Takens points}
\label{sm:sec:neural_network_model:bifurcation_diagramII}
Now that we continued the Hopf and homoclinic bifurcation curves emanating from
the second Bogdanov--Takens point, we can reconstruct the bifurcation diagram
given in \cite[Figure 7]{giannakopoulos2001bifurcations}. The code below
results into a similar figure as \cref{sm:fig:NeuralNetworkCompareParametersII} in \MATLAB.
\inputminted[firstline=218, lastline=245]{MATLAB}{\pathToDDEBifToolDemos/neural_network_model/neural_network_model.m}
\begin{figure}[ht]
    \centering
    \includetikzscaled{NeuralNetworkCompareParametersII}
    \caption{Reconstruction of the bifurcation diagram given in \cite[Figure
        7]{giannakopoulos2001bifurcations}. We could have used the symmetry
        \cref{sm:eq:neuralNetworkSymmetry} instead of computing the additional
        curves numerically. However, it provides us an additional verification of
        our derived asymptotics.}
    \label{sm:fig:NeuralNetworkCompareParametersII}
\end{figure}

\subsection{Simulation with {\tt DifferentialEquations.jl}}
Here we will perform two simulations. The first simulation will be at the
double homoclinic orbits, which will confirm the continuation of both
homoclinic orbits and is also ascetically pleasing, see \cref{sm:fig:NeuralNetworkSimulationHomoclinic}. The second simulation will
be in the region where there should be unstable periodic orbits, see \cref{sm:fig:NeuralNetworkPeriodicSimulation}.

\begin{figure}[ht]
    \includegraphics{\imagedir/NeuralNetworkDoubleHomoclinicSimulation.pdf}
    \caption{Comparing the computed double homoclinic orbit in \cref{sm:eq:neural_network}
    with \DDEBIFTOOL with the solutions obtained from numerical simulation with Julia.
    In the right plot is a close-up near the equilibrium at the origin. Also, the
    leading stable and unstable eigenvectors of the characteristic matrix are plotted. We see the numerical integrated solution
    intersects all the red points from the solution from \DDEBIFTOOL.}
    \label{sm:fig:NeuralNetworkSimulationHomoclinic}
\end{figure}

\begin{figure}[ht]
    \centering
    \includegraphics{\imagedir/NeuralNetworkPeriodicSimulation.pdf}
    \caption{Comparing a computed periodic orbit in \cref{sm:eq:neural_network}
        with \DDEBIFTOOL at $(Q,E)=(1.476442865781454, 0.0)$ with the solution
        obtained from numerical simulation with Julia near the periodic orbit.
        The yellow dotted line has the constant history function $(u_1,u_2) =
        (-0.274863341578762, -0.27715979849863204)$ slightly below the periodic
        orbit.  The blue line has the constant history function $(u_1,u_2) =
        (-0.274863341578762, -0.276969798498632)$, a point on the periodic
        orbit (red dots) located with \DDEBIFTOOL.
    }
    \label{sm:fig:NeuralNetworkPeriodicSimulation}
\end{figure}

\subsubsection{Loading necessary Julia packages}
We start by loading the necessary packages.

\begin{listing}[!ht]
\inputminted[firstline=1, lastline=6]{julia}{\pathToJuliaFiles/neural_network_model_simulation_article.jl}
\caption{Loading Julia packages for simulation in \cref{sm:eq:neural_network}.}
\label{sm:lst:neuralNetworkLoadingPacakges}
\end{listing}

In the previous demonstration we were able to derive the equilibria
analytically.  Here we will solve for the equilibria numerically with the
packages {\tt IntervalArithmetic.jl} \cite{IntervalArithmetic} and  {\tt
IntervalRootFinding.jl} \cite{IntervalRootFinding}.

\subsubsection{Define system}
We define the system to be integrated, a system to approximate the reverse
flow, and also an allocating version used for stability calculations.
\inputminted[firstline=8, lastline=27]{julia}{\pathToJuliaFiles/neural_network_model_simulation_article.jl}

\subsubsection{Functions for plotting arrows}
We define a function to show in which direction the orbits flow, which is
useful when plotting in phase-space. We also define functions to show the
direction of the leading eigenvectors of the characteristic matrix.
The code is shown in \cref{sm:lst:arrow_fucntions}.

\subsubsection{Create figure with several axes}
We create a figure containing multiple axis in which we will plot the bifurcation diagram and
the homoclinic and periodic orbits.
\inputminted[firstline=63, lastline=73]{julia}{\pathToJuliaFiles/neural_network_model_simulation_article.jl}

\subsubsection{Plot bifurcation diagram in the middle}
Loading the continued bifurcation curves obtained with \DDEBIFTOOL and plot these in the middle axis.
This should give a similar bifurcation diagram as in \cref{sm:fig:NeuralNetworkCompareParametersII}.
\inputminted[firstline=76, lastline=102]{julia}{\pathToJuliaFiles/neural_network_model_simulation_article.jl}

\subsubsection{Simulation at the double homoclinic orbit}
In the code below, we integrate at parameter values $(Q,E)=(1.459868437376222,0)$, i.e.,
where we located the double homoclinic orbit. We start by locating the three equilibria
points. Note that by the symmetry, we actually only need to solve for one of them.
We calculate the stability of the equilibria and filter out the leading stable and unstable
eigenvectors from the characteristic matrix of the saddle-node point. 
The rest of the code should be pretty straight forward,
since it is very similar as in the simulation in the previous demonstration.
After running this code, we should obtain a similar plot as in
the left plot of \cref{sm:fig:NeuralNetworkSimulationHomoclinic}.
\inputminted[firstline=105, lastline=166]{julia}{\pathToJuliaFiles/neural_network_model_simulation_article.jl}

\subsubsection{Simulation near an unstable periodic orbit}
To show by integration the existence of an unstable periodic orbit, we first
located a periodic orbit in \DDEBIFTOOL. This can be done by continuing a
branch of periodic orbits emanating from a point on the continued Hopf curve.
Then we load the profiles of the periodic orbits into Julia and start
integration near the periodic orbits.  After running the code below, we should
obtain a similar plot as in
\cref{sm:fig:NeuralNetworkPeriodicSimulation}.
\inputminted[firstline=169, lastline=208]{julia}{\pathToJuliaFiles/neural_network_model_simulation_article.jl}

\section[the Van der Pol oscillator with delay feedback]
        {Transcritical Bogdanov--Takens bifurcation in the Van der Pol oscillator with delay feedback}
We consider the Van der Pol oscillator with delay feedback \cite{jiang2007bogdanov}
given by 
\begin{equation}
\ddot{x}(t) + \epsilon(x^2(t)-1)\dot{x}(t) + x(t) = \epsilon g(x(t-\tau))\label{sm:eq:dde_vanderPol}
\end{equation}
where $\epsilon>0$ is a parameter, $\tau>0$ is a delay and $g:\mathbb{R}\rightarrow\mathbb{R}$
is a smooth function with $g(0) = 0$ and $g'(0)\neq0$. We rewrite
the Van der Pol equation \cref{sm:eq:dde_vanderPol} as
\begin{equation}
\label{sm:eq:vanderPolOscillator}
\begin{cases}
    \dot{x}_1 = x_2,\\
    \dot{x}_2 = \epsilon g(x_1(t-\tau))-\epsilon(x_1^2-1)x_2-x_1.
\end{cases}
\end{equation}
Rescaling time with $t\rightarrow\dfrac{t}{\tau}$ to normalize the
delay yields
\begin{equation}
\label{sm:eq:vanderPolOscillatorRescaled}
\begin{cases}
\dot{x}_1 = \tau x_2,\\
\dot{x}_2 = \tau\left(\epsilon g(x_1(t-1))-\epsilon(x_1^2-1)x_2-x_1\right).
\end{cases}
\end{equation}
This allows to treat $\tau$ as a bifurcation parameter.

Following \cite{jiang2007bogdanov}, we consider \cref{sm:eq:dde_vanderPol} with
\[
g(x) = \frac{e^x-1}{c_1e^x + c_2},
\]
where $c_1 = \dfrac{1}{4}$ and $c_2 = \dfrac{1}{2}$. Then the trivial
equilibrium undergoes a transcritical Bogdanov--Takens bifurcation at parameter
values $(\epsilon,\tau) = (0.75,0.75)$, \cite{jiang2007bogdanov} and the
supplement. 

\begin{remark}
    The \MATLAB files for this demonstration can be found in the directory
    \mintinline[breaklines,breakafter=/]{MATLAB}{demos/tutorial/VII/vdpo_bt_transcritical} relative to the main
    directory of the \DDEBIFTOOL package. Here, we omit the code to generate a
    system file. The system file \mintinline{MATLAB}{sym_vdpo_mf.m} has been generated
    with the script \mintinline{MATLAB}{sym_vdpo_mf.m}. Also, we assume that the
    \DDEBIFTOOL package has been loaded as in \cref{sm:lst:searchpath}. The
    code in
    \crefrange{sm:sec:vpdo:pars_and_funcs}{sm:sec:vdpo:convergence_plot}
    highlights the important parts of the file
    \mintinline{MATLAB}{vanderPolOscillator.m}. 
\end{remark}

\subsection{Set parameter names and funcs structure}
\label{sm:sec:vpdo:pars_and_funcs}
As in the previous example, we set the parameter names and define the \mintinline{MATLAB}{funcs} structure.
\inputminted[firstline=31, lastline=37]{MATLAB}{\pathToDDEBifToolDemos/vdpo_bt_transcritical/vanderPolOscillator.m}

\subsection{Set parameter range}
Since we are only interested here in the local unfolding, we restrict the
allowed parameter range for the unfolding parameters. In practice, one may have
physical restrictions which must be satisfied. Additionally, we also limit the
maximum allowed step size during continuation. By doing so, we obtain more refined
data to compare against our predictors.
\inputminted[firstline=39, lastline=42]{MATLAB}{\pathToDDEBifToolDemos/vdpo_bt_transcritical/vanderPolOscillator.m}

\subsection{Stability and coefficients of the transcritical Bogdanov--Takens point}
We manually construct a steady-state at the transcritical Bogdanov--Takens
point and calculate its stability.
\inputminted[firstline=44, lastline=55]{MATLAB}{\pathToDDEBifToolDemos/vdpo_bt_transcritical/vanderPolOscillator.m}

The \MATLAB console shows the following output.
\begin{minted}{shell-session}
ans =

   1.0e-07 *

       0.6223
      -0.6223

\end{minted}
The eigenvalues confirm that the point under consideration is indeed (an
approximation to) a Bogdanov--Takens point. Furthermore, the remaining eigenvalues have
negative real parts. Next, we calculate the normal form coefficients, the
time-reparametrization, and the transformation to the center manifold with the
function \mintinline{MATLAB}{nmfm_bt_orbital}, which implements the coefficients as derived in
\cref{btdde:sec:transcritical-Bogdanov-Takens}. For this, we need to set the argument
\mintinline{MATLAB}{free_pars} to the unfolding parameter $(Q,E)$. These
coefficients will be used to start the continuation of the codimension one branches
emanating from the Bogdanov--Takens point. Also, since we are in the transcritical case,
we set the argument \mintinline{matlab}{generic_unfolding} to \mintinline{matlab}{false}.
\inputminted[firstline=57, lastline=60]{MATLAB}{\pathToDDEBifToolDemos/vdpo_bt_transcritical/vanderPolOscillator.m}

The \MATLAB console shows the following output.
\begin{minted}{shell-session}
ans =

  struct with fields:

          a: 0.1304
          b: -0.2949
  theta1000: 0.0780
  theta0010: -21.1293
  theta0001: -0.3811
       phi0: [1x1 struct]
       phi1: [1x1 struct]
      h2000: [1x1 struct]
      h1100: [1x1 struct]
      h0200: [1x1 struct]
      h3000: [1x1 struct]
      h2100: [1x1 struct]
        K10: [2x1 double]
        K01: [2x1 double]
        K02: [2x1 double]
        K11: [2x1 double]
        K20: [2x1 double]
      h1010: [1x1 struct]
      h1001: [1x1 struct]
      h0110: [1x1 struct]
      h0101: [1x1 struct]
      h2010: [1x1 struct]
      h1110: [1x1 struct]
      h2001: [1x1 struct]
      h1101: [1x1 struct]
      h1002: [1x1 struct]
      h0102: [1x1 struct]
      h1020: [1x1 struct]
      h0120: [1x1 struct]
      h1011: [1x1 struct]
      h0111: [1x1 struct]
          K: @(beta1,beta2)K10*beta1+K01*beta2+1/2*K20*beta1^2
                    +K11*beta1*beta2+1/2*K02*beta2^2
          H: [function_handle]
\end{minted}
Since the sign of $ab$ is negative, we expect to find stable periodic orbits nearby the 
Bogdanov--Takens point.

\subsection{Comparing profiles of computed and predicted homoclinic orbits}
To test the homoclinic asymptotics from
\cref{btdde:sec:generic_bt_homoclinic_asymptotics} we compare the first and third
order asymptotics to the Newton corrected solution. For this, we use the 
function \mintinline[breaklines,breakafter=_]{MATLAB}{C1branch_from_C2point}. This function returns a branch, which
by default returns two initial corrected approximations in order to start continuation of the
codimension one curve under consideration. By setting the argument
\mintinline{MATLAB}{'predictor'} to \mintinline{MATLAB}{true} the approximations are left uncorrected.
To make the comparison visually clear, we set the perturbation parameter 
$\epsilon=0.1$ (\mintinline{MATLAB}{step = 0.1} in the code below).
The code below produces \cref{sm:fig:VDPOCompareProfiles}.
The difference between the two approximations is clearly noticeable. While
the first order asymptotics is close to the Newton corrected solution, the third
order asymptotics is indistinguishable at this scale from the Newton corrected
solution.
\inputminted[firstline=63, lastline=87]{MATLAB}{\pathToDDEBifToolDemos/vdpo_bt_transcritical/vanderPolOscillator.m}

\begin{figure}[ht]
    \centering
    \includegraphics{\imagedir/VDPOCompareProfiles.pdf}
    \caption{Comparison between the first and third-order homoclinic asymptotics from
    \cref{btdde:sec:transcritical_bt_homoclinic_asymptotics} near the transcritical
        Bogdanov--Takens bifurcation in \cref{sm:eq:vanderPolOscillatorRescaled} with the
        perturbation parameter set to $\epsilon=0.1$.}
    \label{sm:fig:VDPOCompareProfiles}
\end{figure}

\subsection{Continuation of the codimension one curves emanating}
To continue the three codimension one curves emanating from the generic
Bogdanov--Takens point, we can simply use the function
\mintinline[breaklines,breakafter=_]{MATLAB}{C1branch_from_C2point}, as shown in the code below. To monitor the
continuation process, the argument \mintinline{MATLAB}{plot} must be set to \mintinline{MATLAB}{1}.
The most important setting is the perturbation parameter (or multiple),
\mintinline{MATLAB}{step} in the code below. If left out, default step sizes are defined.
However, depending on the problem, no convergence may then be obtained.
\inputminted[firstline=89, lastline=113]{MATLAB}{\pathToDDEBifToolDemos/vdpo_bt_transcritical/vanderPolOscillator.m}

\subsection{Predictors of the codimension one curves emanating from the Bogdanov--Takens point}
Before we provide the bifurcation diagram in the next section, we first obtain the predictors
for the codimension one curves. For this, we again use the function
\mintinline[breaklines,breakafter=_]{MATLAB}{C1branch_from_C2point}. We set the argument \mintinline{MATLAB}{predictor} to \mintinline{MATLAB}{1}
and provide a range of perturbation parameters.
\inputminted[firstline=135, lastline=154]{MATLAB}{\pathToDDEBifToolDemos/vdpo_bt_transcritical/vanderPolOscillator.m}

\subsection{Bifurcation diagram}
The code below produces (a figure similar to) \cref{sm:fig:DoubleAlleeEffectCompareParameters}.
\inputminted[firstline=156, lastline=182]{MATLAB}{\pathToDDEBifToolDemos/vdpo_bt_transcritical/vanderPolOscillator.m}
\begin{figure}[ht]
    \centering
    \includetikzscaled{vanderPolOscillatorCompareParameters}
    \caption{Bifurcation diagram near the derived transcritical Bogdanov-Takens point in
        \cref{sm:eq:vanderPolOscillatorRescaled} comparing computed codimension one curves using
        \DDEBIFTOOL with the third-order homoclinic parameter asymptotics obtained
        in \cref{btdde:sec:transcritical_bt_homoclinic_asymptotics}.}
    \label{sm:fig:VDPOCompareParameters}
\end{figure}

\subsection{Compare homoclinic solutions in phase-space}
To obtain an impression of the third-order homoclinic asymptotics in
phase-space, we compare the corrected and uncorrected homoclinic solutions
with the perturbation parameter ranging from $0.01$ to $0.03$.
The code below produces (a figure similar to) \cref{sm:fig:VDPOCompareOrbitsPhaseSpace}.
We see that the corrected and predicted homoclinic orbits are nearly identical.
\inputminted[firstline=203, lastline=230]{MATLAB}{\pathToDDEBifToolDemos/vdpo_bt_transcritical/vanderPolOscillator.m}
\begin{figure}[ht!]
    \centering
    \includegraphics{\imagedir/VDPOCompareOrbitsPhaseSpace.pdf} \\
    \vspace*{20pt}
    \includegraphics{\imagedir/VDPOCompareOrbitsPhaseSpaceBottom.pdf}
    \caption{Plot comparing the third-order homoclinic asymptotics from
        \cref{btdde:sec:transcritical_bt_homoclinic_asymptotics} near the
        transcritical Bogdanov--Taken in
        \cref{sm:eq:vanderPolOscillatorRescaled} with the Newton correct
        homoclinic solutions phase-space with the perturbation parameter
        $\epsilon$ ranging from $0.01$ to $0.03$.}
    \label{sm:fig:VDPOCompareOrbitsPhaseSpace}
\end{figure}

\subsection{Convergence plot}
\label{sm:sec:vdpo:convergence_plot}
Using the function from \cref{sm:lst:convergence_plot}, we create a log-log
convergence plot comparing the convergence order of the first and third order
homoclinic asymptotics from \cref{btdde:sec:transcritical_bt_homoclinic_asymptotics}.
The code below yields \cref{sm:fig:VDPOConvergencePlot}.
\inputminted[firstline=232, lastline=243]{MATLAB}{\pathToDDEBifToolDemos/vdpo_bt_transcritical/vanderPolOscillator.m}
\begin{figure}[ht]
    \centering
    \ifcompileimages%
        \tikzsetnextfilename{VDPOConvergencePlot}%
        \input{tikz/VDPOConvergencePlot}%
    \else
        \includegraphics{\imagedir/VDPOConvergencePlot}
    \fi

        \caption{On the abscissa is the approximation to the amplitude $A_0$ and on
        the ordinate the relative error $\delta$ between the constructed solution
        \mintinline{MATLAB}{hcli_pred} to the defining system for the homoclinic orbit
        and the Newton corrected solution \mintinline{MATLAB}{hcli_corrected}.}
    \label{sm:fig:VDPOConvergencePlot}
\end{figure}

\subsection{Simulation with {\tt DifferentialEquations.jl}}
Here we will perform four simulations. The first two simulations will be at two
homoclinic orbits located on the two homoclinic curves emanating from the
transcritical Bogdanov--Takens point continued with \DDEBIFTOOL, see
\cref{sm:fig:VDPOSimulationHomoclinic}. The second two simulations will be in
the regions where there should be stable periodic orbits, see
\cref{sm:fig:VDPOPeriodicSimulation}.

\begin{figure}[ht]
    \centering
    \includegraphics{\imagedir/VDPOHomoclinicSimulation.pdf}
    \caption{Comparing the computed homoclinic orbits in \cref{sm:eq:vanderPolOscillatorRescaled}
    with \DDEBIFTOOL with the solutions obtained from numerical simulation with Julia.
    We see the numerical integrated solution
    going through all the red points from the solution from \DDEBIFTOOL.}
    \label{sm:fig:VDPOSimulationHomoclinic}
\end{figure}

\begin{figure}[ht]
    \centering
    \includegraphics{\imagedir/VDPOPeriodicSimulation.pdf}
    \caption{Comparing the computed periodic orbits in \cref{sm:eq:vanderPolOscillatorRescaled}
    with \DDEBIFTOOL with the solutions obtained from numerical simulation with Julia.
    We see the numerical integrated solution
    going through all the red points from the solution from \DDEBIFTOOL.}
    \label{sm:fig:VDPOPeriodicSimulation}
\end{figure}

\subsubsection{Loading necessary Julia packages}
Since we do not have  analytical expressions for the equilibria, we load the
same Julia packages as in the previous demonstration, see
\cref{sm:lst:neuralNetworkLoadingPacakges}.

\subsubsection{Define system}
We define the system to be integrated and also an allocating version used for
stability calculations.
\inputminted[firstline=8, lastline=30]{julia}{\pathToJuliaFiles/vdpo_simulation_article.jl}

\subsubsection{Functions for plotting arrows}
We define a function to show in which direction the orbits flow, which is
useful when plotting in phase-space. We also define functions to show the
direction of the leading eigenvectors of the characteristic matrix.
The code is shown in \cref{sm:lst:arrow_fucntions}.

\subsubsection{Function for creating streamlines plot}
To obtain an impression of the flow near transcritical Bogdanov--Takens point,
we create a streamlines function. This is particularly useful for seeing the
flow around the stable manifold of the saddle-note.
\inputminted[firstline=65, lastline=77]{julia}{\pathToJuliaFiles/vdpo_simulation_article.jl}

\subsubsection{Create figure with several axes}
We create a figure containing multiple axis in which we will plot 
the two homoclinic and two periodic orbits.
\inputminted[firstline=79, lastline=84]{julia}{\pathToJuliaFiles/vdpo_simulation_article.jl}

\subsubsection{Define parameters, equilibria}
We define parameters located on the continued homoclinic branch with
\DDEBIFTOOL. Then calculate the equilibria points in
\cref{sm:eq:vanderPolOscillatorRescaled} near the transcritical
Bogdanov--Takens point.
\inputminted[firstline=86, lastline=93]{julia}{\pathToJuliaFiles/vdpo_simulation_article.jl}

\subsubsection{Plot equilibria and homoclinic orbit}
By plotting the homoclinic orbit obtained with \DDEBIFTOOL located at parameter
values 
\[
    (\epsilon_0, \tau_0) = (0.752774810893411, 0.754736729675371),
\]
we can compare with the numerical simulations.
\inputminted[firstline=95, lastline=101]{julia}{\pathToJuliaFiles/vdpo_simulation_article.jl}

\subsubsection{Leading eigenvectors}
Next, we calculate and plot the leading eigenvectors of the characteristic matrix at the saddle-node bifurcation point.
\inputminted[firstline=103, lastline=117]{julia}{\pathToJuliaFiles/vdpo_simulation_article.jl}

\subsubsection{Define callback}
Since we are only interested in the flow near the equilibria points, we create a
discrete callback to ensure the orbits do not become too large.
\inputminted[firstline=122, lastline=125]{julia}{\pathToJuliaFiles/vdpo_simulation_article.jl}

\subsubsection{Integrate the system at homoclinic orbits I}
Now we define the problem to be integrated and choose the algorithm to be used.
Then we integrate the system for a range of initial history functions using the
function \mintinline{julia}{streamlines}. Next, we integrate the system near
the inner equilibrium, i.e., the equilibrium inside the homoclinic orbit. This
equilibria should be an unstable spiral. By using the unstable eigenvector of the
characteristic matrix, we obtain a solution going through the homoclinic solution
obtained with \DDEBIFTOOL.
\inputminted[firstline=127, lastline=152]{julia}{\pathToJuliaFiles/vdpo_simulation_article.jl}

\subsubsection{Add arrows on solutions}
Lastly, we add arrows to the obtained solutions using the function
\mintinline{julia}{draw_arrow_on_solution} defined above.
\inputminted[firstline=154, lastline=158]{julia}{\pathToJuliaFiles/vdpo_simulation_article.jl}
We should now obtain an interactive figure similar to the left figure in \cref{sm:fig:VDPOSimulationHomoclinic}.

\subsubsection{Simulation near stable periodic orbit I}
The code for numerical simulation near the second homoclinic orbit, see the
right plot in \cref{sm:fig:VDPOSimulationHomoclinic}, is almost identical to
the code above for the first homoclinic orbit and is therefore not included
here.

To show by integration the existence of an stable periodic orbit, we first
located a periodic orbit in \DDEBIFTOOL. This can be done by continuing a
branch of periodic orbits emanating from a point on the continued Hopf curves.
Then we load the profiles of the periodic orbits into Julia. We perform two
simulations. For the first simulation, we integrate with a constant history
function equal to a point inside the periodic orbit. The second starts from the
unstable eigenvector of the characteristic matrix calculated above.
\inputminted[firstline=230, lastline=279]{julia}{\pathToJuliaFiles/vdpo_simulation_article.jl}
After running the above code, we should obtain a similar plot as in \cref{sm:fig:VDPOPeriodicSimulation}.
\begin{remark}
By the intersection of the orbits in the first simulation, we see that, although
the system on the center manifold is equivalent to an ODE, the system we
integrate is still a DDE.
\end{remark}

\section[Tri-neuron BAM neural network model]
        {Transcritical Bogdanov--Takens bifurcation in a tri-neuron BAM neural network model}
We consider a three-component system of a tri-neuron bidirectional
associative memory (BAM) neural network model with multiple delays
\cite{dong2013bogdanov}. The architecture of this BAM model is illustrated in
\cref{sm:fig:BAM_architecture_graph}. 

\begin{figure}
\centering
\includetikzscaled[0.75]{BAM_architecture_graph}
\caption{The graph of architecture for model \cref{sm:eq:tri_neuron_BAM}}
\label{sm:fig:BAM_architecture_graph}
\end{figure}

In this model, there is only one neuron with the activation function
$f_{1}$ on the $I$-layer and there are two neurons with respective
activation functions $f_{2}$ and $f_{3}$ on the $J$-layer. We assume
that the time delay from the $I$-layer to the $J$-layer is $\tau_{1}$,
while the time delay from the $J$-layer to the $I$-layer is $\tau_{2}$.
Then the network can be described by the following delay differential equation:
\begin{equation}
\label{sm:eq:tri_neuron_BAM}
\begin{aligned}
\begin{cases}
\dot{x}_{1}(t) = -\mu_{1}x_{1}(t)+c_{21}f_{1}(x_{2}(t-\tau_{2}))+c_{31}f_{1}(x_{3}(t-\tau_{2})),\\
\dot{x}_{2}(t) = -\mu_{2}x_{2}(t)+c_{12}f_{2}(x_{1}(t-\tau_{1})),\\
\dot{x}_{3}(t) = -\mu_{3}x_{3}(t)+c_{13}f_{3}(x_{1}(t-\tau_{1})),
\end{cases}
\end{aligned}
\end{equation}
where:
\begin{itemize}
\item $x_{i}(t)\,(i=1,2,3)$ denote the state of the neuron at time $t$;
\item $\mu_{i}(i=1,2,3)$ describe the attenuation rate of internal neurons
processing on the $I$-layer and the $J$-layer and $\mu_{i}>0$;
\item the real constants $c_{i1}$and $c_{1i}\,(2,3)$ denote the neurons
in two layers: the $I$-layer and the $J$-layer.
\end{itemize}
Letting $u_{1}(t)=x_{1}(t-\tau_{1}),u_{2}(t)=x_{2}(t),u_{3}(t)=x_{3}(t)$
and $\tau=\tau_{1}+\tau_{2}$, then system \cref{sm:eq:tri_neuron_BAM}
is equivalent to the following system:

\begin{equation}
\label{sm:eq:tri_neuron_BAM-u}
\begin{cases}
\dot{u}_{1}(t) = -\mu_{1}u_{1}(t)+c_{21}f_{1}(u_{2}(t-\tau))+c_{31}f_{1}(u_{3}(t-\tau)),\\
\dot{u}_{2}(t) = -\mu_{2}u_{2}(t)+c_{12}f_{2}(u_{1}(t)),\\
\dot{u}_{3}(t) = -\mu_{3}u_{3}(t)+c_{13}f_{3}(u_{1}(t)).
\end{cases}
\end{equation}

\begin{lemma}
\label{sm:lem:BAM_double_eigenvalue}
Assume that $f_{i}(0)=0\,(i=1,2,3)$, $f_{i}'(0)\neq0\,(i=1,2,3)$ and
$\mu_{2}\neq\mu_{3}$, then the steady-state $(u_{1},u_{2},u_{3})=(0,0,0)$ has a
double zero eigenvalue at 
\begin{align*}
c_{21} & =c_{21}^{0}=\frac{\mu_{2}^{2}\left(\mu_{1}\left(\mu_{3}\tau+1\right)+\mu_{3}\right)}{c_{12}\left(\mu_{2}-\mu_{3}\right)f_{1}'(0)f_{2}'(0)},\\
c_{31} & =c_{31}^{0}=\frac{\mu_{3}^{2}\left(\mu_{1}\left(\mu_{2}\tau+1\right)+\mu_{2}\right)}{c_{13}\left(\mu_{3}-\mu_{2}\right)f_{1}'(0)f_{3}'(0)}.
\end{align*}
\end{lemma}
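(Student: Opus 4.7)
The plan is to directly use the characterization of eigenvalues of the generator through the characteristic matrix, as recalled in \cref{btdde:eq:CharMatrix,btdde:eq:main:det_delta}. Linearizing \cref{btdde:eq:tri_neuron_BAM-u} at the origin and reading off the kernel $\zeta$, the characteristic matrix becomes
\[
\Delta(z) = \begin{pmatrix}
z+\mu_{1} & -c_{21}f_{1}'(0)e^{-z\tau} & -c_{31}f_{1}'(0)e^{-z\tau}\\
-c_{12}f_{2}'(0) & z+\mu_{2} & 0\\
-c_{13}f_{3}'(0) & 0 & z+\mu_{3}
\end{pmatrix}.
\]
Expanding along the first row and collecting terms gives
\[
\det\Delta(z) = (z+\mu_{1})(z+\mu_{2})(z+\mu_{3}) - \alpha\,(z+\mu_{3})e^{-z\tau} - \beta\,(z+\mu_{2})e^{-z\tau},
\]
with the shorthand $\alpha \DEF c_{21}c_{12}f_{1}'(0)f_{2}'(0)$ and $\beta \DEF c_{31}c_{13}f_{1}'(0)f_{3}'(0)$.

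Since $z=0$ is an eigenvalue of algebraic multiplicity at least two precisely when $\det\Delta(0) = 0$ and $\frac{d}{dz}\det\Delta(z)|_{z=0} = 0$ (this is the content of \cref{btdde:eq:main:det_delta} together with the fact that algebraic multiplicity equals the root order), I will compute these two conditions explicitly. A direct differentiation yields the linear system in $(\alpha,\beta)$
\begin{equation*}
\begin{pmatrix}\mu_{3} & \mu_{2}\\ 1-\tau\mu_{3} & 1-\tau\mu_{2}\end{pmatrix}
\begin{pmatrix}\alpha\\ \beta\end{pmatrix}
=
\begin{pmatrix}\mu_{1}\mu_{2}\mu_{3}\\ \mu_{1}\mu_{2}+\mu_{1}\mu_{3}+\mu_{2}\mu_{3}\end{pmatrix}.
\end{equation*}
The determinant of the coefficient matrix is $\mu_{3}-\mu_{2}-\tau\mu_{2}\mu_{3}+\tau\mu_{2}\mu_{3} = \mu_{3}-\mu_{2}$, which is nonzero by the standing assumption $\mu_{2}\neq\mu_{3}$. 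Solving for $\alpha$ and $\beta$ by Cramer's rule (noting that the $\tau$-contributions in the off-diagonal cofactor cancel) gives
\[
\alpha = \frac{\mu_{2}^{2}\bigl(\mu_{1}(\mu_{3}\tau+1)+\mu_{3}\bigr)}{\mu_{2}-\mu_{3}}, \qquad
\beta = \frac{\mu_{3}^{2}\bigl(\mu_{1}(\mu_{2}\tau+1)+\mu_{2}\bigr)}{\mu_{3}-\mu_{2}}.
\]
Dividing by $c_{12}f_{1}'(0)f_{2}'(0)$ and $c_{13}f_{1}'(0)f_{3}'(0)$ respectively, which is allowed because $f_{i}'(0)\neq 0$ for $i=1,2,3$, produces exactly the stated expressions for $c_{21}^{0}$ and $c_{31}^{0}$.

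There is little real obstacle here: the computation is essentially a $3\times 3$ cofactor expansion followed by a $2\times 2$ linear solve, and the non-degeneracy assumptions $\mu_{2}\neq\mu_{3}$ and $f_{i}'(0)\neq 0$ are precisely what is needed for the two inversions to be valid. The only delicate point to watch is the algebraic simplification in the top-right cofactor of the $2\times 2$ system, where the $\tau$ terms must cancel; otherwise one could be tempted to include $\tau$ in the denominator of the final answer, which would not match the statement of the lemma and would also contradict the numerical result that the expression is well-defined at $\tau=0$.
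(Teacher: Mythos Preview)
Your argument is correct and follows essentially the same route as the paper: write down the characteristic matrix, impose $\det\Delta(0)=0$ and $\tfrac{d}{dz}\det\Delta(z)\big|_{z=0}=0$, and solve the resulting linear system for the two coupling constants. Your use of the abbreviations $\alpha,\beta$ and Cramer's rule makes the linear algebra a bit cleaner than in the paper, but the substance is identical.

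One point you omit that the paper does include: your two conditions guarantee that $z=0$ is a root of order \emph{at least} two, but not that it is exactly double. The paper additionally computes
\[
\frac{d^{2}}{d\lambda^{2}}\det\Delta(0)\Big|_{(c_{21},c_{31})=(c_{21}^{0},c_{31}^{0})}
= 2(\mu_{1}+\mu_{2}+\mu_{3}) + 2\tau(\mu_{1}\mu_{2}+\mu_{1}\mu_{3}+\mu_{2}\mu_{3}) + \tau^{2}\mu_{1}\mu_{2}\mu_{3},
\]
which is strictly positive for $\tau>0$ and $\mu_i>0$, so the root has multiplicity exactly two. Since the lemma is used to locate a Bogdanov--Takens point (where a genuinely double, not higher-order, zero is required), you should add this short verification.
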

\begin{proof}
The characteristic matrix of \cref{sm:eq:tri_neuron_BAM-u} is given
by
\[
\Delta(\lambda)=\left(\begin{array}{ccc}
\lambda+\mu_{1} & -e^{-\lambda\tau}c_{21}f_{1}'(0) & -e^{-\lambda\tau}c_{31}f_{1}'(0)\\
-c_{12}f_{2}'(0) & \lambda+\mu_{2} & 0\\
-c_{13}f_{3}'(0) & 0 & \lambda+\mu_{3}

\end{array}\right).
\]
Thus, the characteristic equation becomes 
\begin{align}
\det\Delta(\lambda) & =\lambda^{3}+\left(\mu_{1}+\mu_{2}+\mu_{3}\right)\lambda^{2}+\big(-c_{12}c_{21}f_{1}'(0)f_{2}'(0)e^{-\lambda\tau}\nonumber \\
 & \qquad-c_{13}c_{31}f_{1}'(0)f_{3}'(0)e^{-\lambda\tau}+\mu_{1}\mu_{2}+\mu_{3}\mu_{2}+\mu_{1}\mu_{3}\big)\lambda\nonumber \\
 & \qquad+\mu_{1}\mu_{2}\mu_{3}-e^{-\lambda\tau}\left(c_{12}c_{21}\mu_{3}f_{2}'(0)+c_{13}c_{31}\mu_{2}f_{3}'(0)\right)f_{1}'(0)=0.\label{sm:eq:BAM_characteristic_eq}
\end{align}

Clearly, $\lambda=0$ is a root if and only if
\[
\mu_{1}\mu_{2}\mu_{3}=\left(c_{12}c_{21}\mu_{3}f_{2}'(0)+c_{13}c_{31}\mu_{2}f_{3}'(0)\right)f_{1}'(0).
\]
Taking the derivative of \cref{sm:eq:BAM_characteristic_eq} with respect
to $\lambda$ gives
\begin{align}
\dfrac{d}{d\lambda}\det\Delta(\lambda) & =3\lambda^{2}+2\left(\mu_{1}+\mu_{2}+\mu_{3}\right)\lambda+\big(-c_{12}c_{21}f_{1}'(0)f_{2}'(0)e^{-\lambda\tau}\nonumber \\
 & \qquad-c_{13}c_{31}f_{1}'(0)f_{3}'(0)e^{-\lambda\tau}+\mu_{1}\mu_{2}+\mu_{3}\mu_{2}+\mu_{1}\mu_{3}\big)\nonumber \\
 & \qquad+\tau\left(c_{12}c_{21}f_{2}'(0)e^{-\lambda\tau}+c_{13}c_{31}f_{3}'(0)e^{-\lambda\tau}\right)f_{1}'(0)\lambda\nonumber \\
 & \qquad+\tau e^{-\lambda\tau}\left(c_{12}c_{21}\mu_{3}f_{2}'(0)+c_{13}c_{31}\mu_{2}f_{3}'(0)\right)f_{1}'(0)=0.\label{sm:eq:BAM_characteristic_eq-derivative}
\end{align}
Therefore, we have
\begin{align*}
    \det\Delta'(0) &= \big(-c_{12}c_{21}f_{1}'(0)f_{2}'(0)-c_{13}c_{31}f_{1}'(0)f_{3}'(0)+\mu_{1}\mu_{2}+\mu_{3}\mu_{2}+\mu_{1}\mu_{3}\big)=0.
\end{align*}

For any $\tau>0$, it is easy to see that $\det\Delta(\lambda)=\det\Delta'(\lambda)=0$,
if and only if the following conditions are satisfied
\begin{equation}
\begin{cases}
\left((1-\tau\mu_{3})c_{12}c_{21}f_{2}'(0)+(1-\tau\mu_{2})c_{13}c_{31}f_{3}'(0)\right)f_{1}'(0)=\mu_{1}\mu_{2}+\mu_{3}\mu_{2}+\mu_{1}\mu_{3},\\
\\
\left(c_{12}c_{21}\mu_{3}f_{2}'(0)+c_{13}c_{31}\mu_{2}f_{3}'(0)\right)f_{1}'(0)=\mu_{1}\mu_{2}\mu_{3}.
\end{cases}\label{sm:eq:BAM_double_eigvalue_zero_condition}
\end{equation}
By solving \cref{sm:eq:BAM_double_eigvalue_zero_condition} for $(c_{21},c_{31})$
we get $(c_{21},c_{31})=(c_{21}^{0},c_{31}^{0})$.
Taking the derivative of \cref{sm:eq:BAM_characteristic_eq-derivative}
yields
\begin{align}
\dfrac{d^{2}}{d\lambda^{2}}\det\Delta(\lambda) & =6\lambda+2\left(\mu_{1}+\mu_{2}+\mu_{3}\right)+\tau f_{1}'(0)\big(c_{12}c_{21}f_{2}'(0)e^{-\lambda\tau}+c_{13}c_{31}f_{3}'(0)e^{-\lambda\tau}\big)\nonumber \\
 & \qquad+\tau\left(c_{12}c_{21}f_{2}'(0)e^{-\lambda\tau}+c_{13}c_{31}f_{3}'(0)e^{-\lambda\tau}\right)f_{1}'(0)\nonumber \\
 & \qquad-\tau^{2}\left(c_{12}c_{21}f_{2}'(0)e^{-\lambda\tau}+c_{13}c_{31}f_{3}'(0)e^{-\lambda\tau}\right)f_{1}'(0)\lambda\nonumber \\
 & \qquad-\tau^{2}e^{-\lambda\tau}\left(c_{12}c_{21}\mu_{3}f_{2}'(0)+c_{13}c_{31}\mu_{2}f_{3}'(0)\right)f_{1}'(0)=0.\label{sm:eq:BAM_characteristic_eq-derivative-1}
\end{align}
Then we can obtain
\begin{align*}
 & \dfrac{d^{2}}{d\lambda^{2}}\det\Delta(0)\vert_{(c_{21},c_{31})=(c_{21}^{0},c_{31}^{0})}\\
 & \quad=2\left(\mu_{1}+\mu_{2}+\mu_{3}\right)+2\tau f_{1}'(0)\big(c_{12}c_{21}^{0}f_{2}'(0)+c_{13}c_{31}^{0}f_{3}'(0)\big)\\
 & \qquad-\tau^{2}f_{1}'(0)\left(c_{12}c_{21}^{0}\mu_{3}f_{2}'(0)+c_{13}c_{31}^{0}\mu_{2}f_{3}'(0)\right)\\
 & \quad=2\left(\mu_{1}+\mu_{2}+\mu_{3}\right)+\tau\left(\frac{\mu_{2}^{2}\left(\mu_{1}\left(\mu_{3}\tau+1\right)+\mu_{3}\right)}{\left(\mu_{2}-\mu_{3}\right)}+\frac{\mu_{3}^{2}\left(\mu_{1}\left(\mu_{2}\tau+1\right)+\mu_{2}\right)}{\left(\mu_{3}-\mu_{2}\right)}\right)\\
 & \qquad-\tau^{2}\left(\frac{\mu_{2}^{2}\left(\mu_{1}\left(\mu_{3}\tau+1\right)+\mu_{3}\right)}{\left(\mu_{2}-\mu_{3}\right)}\mu_{3}+\frac{\mu_{3}^{2}\left(\mu_{1}\left(\mu_{2}\tau+1\right)+\mu_{2}\right)}{\left(\mu_{3}-\mu_{2}\right)}\mu_{2}\right)\\
 & \quad=2\left(\mu_{1}+\mu_{2}+\mu_{3}\right)+2\tau\left(\mu_{1}\mu_{2}+\mu_{1}\mu_{3}+\mu_{2}\mu_{3}\right)+\tau^{2}\mu_{1}\mu_{2}\mu_{3}.
\end{align*}

Since $\tau>0$ and $\mu_{i}>0 (i=1,2,3)$ the second derivative of
the characteristic equations at $(\lambda,c_{21},c_{31})=(0,c_{21}^{0},c_{31}^{0})$
does not vanish, and we obtain a double zero eigenvalue.
\end{proof}

\begin{lemma}
\label{sm:lemma:triNeuralBAMNetworkModelEigenvalues}
\textup{Correction to \cite[Lemma 3]{dong2013bogdanov}}
Let $(c_{21},c_{31})=(c_{21}^{0},c_{31}^{0})$,
\begin{equation}
    \label{sm:eq:triNeuralBAMNetworkModel:omega_0} 
    \omega_0 = \frac{\sqrt{-\mu_1^2 - \mu_2^2 - \mu_3^2 + \sqrt{\zeta_0}}}{\sqrt{2}}
\end{equation}
and $0<\tau<\tau_{0}$, where $\tau_0$ is the minimum positive solution to the nonlinear equation
\begin{equation}
    \label{sm:eq:triNeuralBAMNetworkModel:tan} 
    \tan (\tau \omega_0) = \frac{b_0\zeta_1 - a_0\zeta_2}{a_0\zeta_1 + b_0\zeta_2},
\end{equation}
with
\begin{align*}
a_0 &= -\mu_1\mu_2\mu_3, \\ 
b_0 &= -\omega_0(\mu_2\mu_3 + \mu_1(\mu_2 + \mu_3 + \mu_2\mu_3\tau)), \\
\zeta_0 &= \mu_1^4 + (\mu_2^2 + \mu_3^2)^2 + 8\mu_1\mu_2\mu_3(\mu_2 + \mu_3 + \mu_2\mu_3\tau) + \\
        &\qquad 2\mu_1^2(\mu_3^2 + 4\mu_2\mu_3(1 + \mu_3\tau) + \mu_2^2(1 + 2\mu_3\tau(2 + \mu_3\tau))), \\
\zeta_1 &= \mu_1\mu_2\mu_3 - (\mu_1 + \mu_2 + \mu_3)\omega_0^2, \\
\zeta_2 &= \mu_2\mu_3\omega_0 + \mu_1(\mu_2 + \mu_3)\omega_0 - \omega_0^3.
\end{align*}
Then all roots of the characteristic equation \cref{sm:eq:BAM_characteristic_eq},
except the double zero roots, have negative real parts.
\end{lemma}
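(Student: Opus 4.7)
The plan is to combine a continuity argument for the roots of the characteristic equation with an explicit computation of the first imaginary-axis crossing as $\tau$ is varied. First I would substitute $(c_{21},c_{31}) = (c_{21}^{0},c_{31}^{0})$ into the characteristic equation \cref{sm:eq:BAM_characteristic_eq}. By \cref{sm:lem:BAM_double_eigenvalue} together with the computation of $\det\Delta''(0)\vert_{(c_{21}^{0},c_{31}^{0})} = 2(\mu_{1}+\mu_{2}+\mu_{3})+2\tau(\mu_{1}\mu_{2}+\mu_{1}\mu_{3}+\mu_{2}\mu_{3})+\tau^{2}\mu_{1}\mu_{2}\mu_{3} > 0$ carried out in its proof, the resulting entire function $\det\Delta(\lambda;\tau)$ has $\lambda=0$ as a root of algebraic multiplicity exactly two for every $\tau \ge 0$. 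Consequently no additional zero root can bifurcate off the origin as $\tau$ varies, and it suffices to rule out nonzero roots with nonnegative real part.

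Next I would treat $\tau = 0$ as the initial baseline. Using the identities $A+B = \mu_{1}\mu_{2}+\mu_{1}\mu_{3}+\mu_{2}\mu_{3}$ and $A\mu_{3}+B\mu_{2} = \mu_{1}\mu_{2}\mu_{3}$ (with $A = c_{12}c_{21}^{0}f_{1}'(0)f_{2}'(0)$ and $B = c_{13}c_{31}^{0}f_{1}'(0)f_{3}'(0)$) which characterise the critical point, the characteristic equation at $\tau = 0$ collapses to $\lambda^{2}(\lambda+\mu_{1}+\mu_{2}+\mu_{3})=0$, whose only nonzero root is $-(\mu_{1}+\mu_{2}+\mu_{3})<0$. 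Because the roots of $\det\Delta(\lambda;\tau)$ depend continuously on $\tau$, cannot accumulate in any bounded region of the complex plane (as $\det\Delta$ is entire of exponential type), and cannot bifurcate off the double zero by the multiplicity argument above, any nonzero root that enters the open right half-plane for some $\tau > 0$ must first cross the imaginary axis at a purely imaginary value $\lambda = i\omega$ with $\omega > 0$.

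The remaining task is then to identify the smallest $\tau > 0$ admitting such a purely imaginary root. I would set $\lambda = i\omega$ in the characteristic equation at $(c_{21}^{0},c_{31}^{0})$ and separate real and imaginary parts, organising the result as a linear system in $(\cos(\omega\tau),\sin(\omega\tau))$ of the form
\[
  a_{0}\cos(\omega\tau) + b_{0}\sin(\omega\tau) = \zeta_{1}, \qquad
  b_{0}\cos(\omega\tau) - a_{0}\sin(\omega\tau) = \zeta_{2},
\]
with $a_{0}, b_{0}, \zeta_{1}, \zeta_{2}$ the polynomials in $\omega$ and $\tau$ displayed in the lemma. Eliminating the trigonometric terms via $\cos^{2}+\sin^{2}=1$ yields a quadratic in $\omega^{2}$ whose positive solution is precisely $\omega_{0}$ from \cref{sm:eq:triNeuralBAMNetworkModel:omega_0}. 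Substituting $\omega=\omega_{0}$ back into the linear system and taking the ratio gives the tangent relation \cref{sm:eq:triNeuralBAMNetworkModel:tan}, whose minimum positive solution defines $\tau_{0}$. Combining this with the previous paragraph, for every $0<\tau<\tau_{0}$ all eigenvalues other than the double zero lie in the open left half-plane, which is the claim.

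The main obstacle I anticipate is purely computational bookkeeping: carrying the real/imaginary separation and the elimination through without sign errors, and showing that the resulting quadratic in $\omega^{2}$ really reduces to the compact expression $\zeta_{0}$ advertised in the statement. A subsidiary point to verify is that the minimum-positive prescription in \cref{sm:eq:triNeuralBAMNetworkModel:tan} actually selects the first crossing, which amounts to checking that the eliminated polynomial in $\omega^{2}$ admits no other positive root producing a smaller $\tau$; this can be read off from the sign structure of the discriminant in $\zeta_{0}$, together with the observation that for $\tau\downarrow 0$ the would-be crossing frequency $\omega_{0}(\tau)$ stays bounded away from $0$, so no imaginary crossing can happen arbitrarily close to $\tau = 0$.
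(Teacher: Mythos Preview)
Your proposal is correct and follows essentially the same route as the paper: substitute $\lambda=i\omega$ at the critical parameter values, separate real and imaginary parts into a $2\times 2$ linear system in $(\cos\omega\tau,\sin\omega\tau)$, square-and-add to obtain the biquadratic whose positive root is $\omega_{0}$, take the ratio to get the tangent relation \cref{sm:eq:triNeuralBAMNetworkModel:tan}, and then invoke continuity of roots in $\tau$ starting from the $\tau=0$ baseline. The paper cites \cite[Corollary~2.3]{Ruan@2001} for the continuity/crossing step where you sketch it directly, and it spends a paragraph arguing that \cref{sm:eq:triNeuralBAMNetworkModel:tan} actually has countably many solutions (monotonicity of $\tau\mapsto\omega_{0}\tau$, asymptotics of the right-hand side), which corresponds to the ``subsidiary point'' you flag at the end. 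One small slip: your linear system should read $a_{0}\cos(\omega\tau)+b_{0}\sin(\omega\tau)=-\zeta_{1}$ and $b_{0}\cos(\omega\tau)-a_{0}\sin(\omega\tau)=-\zeta_{2}$ (matching the paper's matrix form), but this sign does not affect either the modulus equation or the tangent ratio, so your conclusions are unchanged.
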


\begin{proof}
Consider that there are eigenvalues $\lambda\neq0$ on the imaginary
axis for $(c_{21}^{0},c_{31}^{0})$. Substituting $\lambda=i\omega,(\omega>0)$
and $(c_{21}^{0},c_{31}^{0})$ into \cref{sm:eq:BAM_characteristic_eq},
and rearranging terms according to its real and imaginary part yields
\begin{equation}
\label{sm:eq:BAM_real_imag_parts_char_eq}
-\begin{pmatrix}
    \zeta_1 \\
    \zeta_2 
\end{pmatrix}
=
\begin{pmatrix}
    a_0 & \phantom{-}b_0 \\
    b_0 & -a_0
\end{pmatrix}
\begin{pmatrix}
\cos\tau\omega \\
\sin\tau\omega
\end{pmatrix}.
\end{equation}
By squaring and adding the above equations, it follows that
\begin{equation}
    \label{sm:eq:BAM_omega}
    \zeta_1^2 + \zeta_2^2 = a_0^2 + b_0^2.
\end{equation}
Solving the equation for positive $\omega>0$ yields \cref{sm:eq:triNeuralBAMNetworkModel:omega_0}.

Next, from \cref{sm:eq:BAM_real_imag_parts_char_eq}, we obtain \cref{sm:eq:triNeuralBAMNetworkModel:tan}.
First, notice that $\tau \mapsto \omega_0 \tau$ is a strictly increasing
function since it is the product of two strictly increasing positive functions.
It follows that $\tau \mapsto \tan \omega_0 \tau$ is periodic in $\tau$ with
range $\mathbb R$. The next observation is that the denominator in the right-hand
side of \cref{sm:eq:triNeuralBAMNetworkModel:tan} has a unique positive root $\tau=\tau_1$, at which
that numerator does not vanish. In fact, it can be checked that the numerator
does never vanish. Lastly, taking the limit of the right-hand side of
\cref{sm:eq:triNeuralBAMNetworkModel:tan} of $\tau$ to infinity is $0$, i.e.
\[
    \lim_{\tau \rightarrow \infty} \frac{-b_0\zeta_1 + a_0\zeta_2}{\phantom{-}a_0\zeta_1 + b_0\zeta_2} = 0.
\]
By the continuity of the right-hand side, it follows that \cref{sm:eq:triNeuralBAMNetworkModel:tan} has
countable many solutions in $\tau$. Let $\tau_0$ be the minimum positive 
solution. Since for $\tau=0$ all solutions to the characteristic equation, except
of the double zero eigenvalue at the origin, have negative real parts.
We conclude by \cite[Corollary 2.3]{Ruan@2001} that
all eigenvalues, except for the double zero eigenvalues, are located
in the left half plane for $0 < \tau < \tau_0$.
\end{proof}

\begin{remark}
    Note that, although $(\omega,\tau) = (\omega_0,\tau_0)$, solves \cref{sm:eq:triNeuralBAMNetworkModel:tan,sm:eq:BAM_omega},
    this not necessary means it solves the original equations \cref{sm:eq:BAM_real_imag_parts_char_eq}.
    Thus, the center manifold may still be stable for values $\tau>\tau_0$. We will demonstrate this
    in the example below.
\end{remark}

For the numerical verification we consider, as in the simulations in
\cite[Example 1]{dong2013bogdanov}, the system \cref{sm:eq:tri_neuron_BAM-u} with
the activation functions
\begin{equation}
    \label{sm:eq:triNeuralBAMNetworkModelFunctions}
    f_{1}(x)=\tanh(x)+0.1x^{2},\quad f_{2}(x)=f_{3}(x)=\tanh(x),
\end{equation}
and parameter values
\begin{equation}
    \label{sm:eq:triNeuralBAMNetworkModelFixedParameters}
    \mu_{1}=0.1,\mu_{2}=0.3,\mu_{3}=0.2,c_{12}=c_{13}=1,\tau=5.
\end{equation}
Then, from \cref{sm:lem:BAM_double_eigenvalue}, we obtain two critical
values 
\[
(c_{21}^{0},c_{31}^{0})=(0.36,-0.22),
\]
at which there is a transcritical Bogdanov--Takens point. Furthermore, since
$\tau < \tau_0 \approx 5.4320$ the center manifold is locally attractive. In
fact, we will show below that the center manifold is locally attractive for
$0<\tau<13.2309348879375$.

\begin{remark}
    The \MATLAB files for this demonstration can be found in the directory
    \mintinline[breaklines,breakafter=/]{MATLAB}{demos/tutorial/VII/BAM_neural_network_model} relative to the main
    directory of the package \DDEBIFTOOL. Here, we omit the code to generate a
    system file. The system file\\
    \mintinline{MATLAB}{sym_BAMnn_mf.m} has been
    generated with the script \mintinline{MATLAB}{sym_BAMnn.m}. Also, we assume
    that the \DDEBIFTOOL package has been loaded as in
    \cref{sm:lst:searchpath}. The code in
    \crefrange{sm:sec:tri_neuron_BAM:pars_and_funcs}
              {sm:sec:tri_neuron_BAM:convergence_plot}
    highlights the important parts of the file
    \mintinline{MATLAB}{BAMnn.m}. 
\end{remark}

\subsection{Set parameter names and funcs structure} 
\label{sm:sec:tri_neuron_BAM:pars_and_funcs}
As in the previous example, we set the parameter names and define the \mintinline{MATLAB}{funcs} structure.
\inputminted[firstline=27, lastline=33]{MATLAB}{\pathToDDEBifToolDemos/BAM_neural_network_model/BAMnn.m}

\subsection{Set parameter range}
Since we are only interested here in the local unfolding, we restrict the
allowed parameter range for the unfolding parameters. In practice, one may have
physical restrictions which must be satisfied. Additionally, we also limit the
maximum allowed step size during continuation. By doing so, we obtain more refined
data to compare against our predictors.
\inputminted[firstline=35, lastline=38]{MATLAB}{\pathToDDEBifToolDemos/BAM_neural_network_model/BAMnn.m}

\subsection{Stability and coefficients of the transcritical Bogdanov--Takens point}
We manually construct a steady-state at the transcritical Bogdanov--Takens
point and calculate its stability.
\inputminted[firstline=40, lastline=49]{MATLAB}{\pathToDDEBifToolDemos/BAM_neural_network_model/BAMnn.m}

The \MATLAB console shows the following output.
\begin{minted}{shell-session}
ans =

  -0.0000 + 0.0000i
  -0.0000 - 0.0000i
  -0.2246 + 0.6600i
  -0.2246 - 0.6600i
  -0.6371 + 1.8063i
  -0.6371 - 1.8063i
  -0.8483 + 3.0681i
  -0.8483 - 3.0681i
  -0.9849 + 4.3336i
  -0.9849 - 4.3336i
\end{minted}
The eigenvalues confirm that the point under consideration is indeed (an
approximation to) a Bogdanov--Takens point. Furthermore, the remaining eigenvalues have
negative real parts. Next, we calculate the normal form coefficients, the
time-reparametrization, and the transformation to the center manifold with the
function \mintinline{MATLAB}{nmfm_bt_orbital}, which implements the coefficients as derived in
\cref{btdde:sec:transcritical-Bogdanov-Takens}. For this, we need to set the argument
\mintinline{MATLAB}{free_pars} to the unfolding parameter $(\alpha_1,\alpha_2)$. These
coefficients will be used to start the continuation of the codimension one branches
emanating from the Bogdanov--Takens point. Also, since we are in the transcritical case,
we set the argument \mintinline{matlab}{generic_unfolding} to \mintinline{matlab}{false}.
\inputminted[firstline=51, lastline=55]{MATLAB}{\pathToDDEBifToolDemos/BAM_neural_network_model/BAMnn.m}

The \MATLAB console shows the following output.
\begin{minted}{shell-session}

ans =

  struct with fields:

          a: 0.0012
          b: -0.0135
  theta1000: -2.5813
  theta0010: 3.1322e+03
  theta0001: -190.0753
       phi0: [1x1 struct]
       phi1: [1x1 struct]
      h2000: [1x1 struct]
      h1100: [1x1 struct]
      h0200: [1x1 struct]
      h3000: [1x1 struct]
      h2100: [1x1 struct]
        K10: [2x1 double]
        K01: [2x1 double]
        K02: [2x1 double]
        K11: [2x1 double]
        K20: [2x1 double]
      h1010: [1x1 struct]
      h1001: [1x1 struct]
      h0110: [1x1 struct]
      h0101: [1x1 struct]
      h2010: [1x1 struct]
      h1110: [1x1 struct]
      h2001: [1x1 struct]
      h1101: [1x1 struct]
      h1002: [1x1 struct]
      h0102: [1x1 struct]
      h1020: [1x1 struct]
      h0120: [1x1 struct]
      h1011: [1x1 struct]
      h0111: [1x1 struct]
          K: @(beta1,beta2)K10*beta1+K01*beta2+1/2*K20*beta1^2
                    +K11*beta1*beta2+1/2*K02*beta2^2
          H: [function_handle]

\end{minted}
Since the sign of $ab$ is negative, we expect to find stable periodic orbits nearby the 
Bogdanov--Takens point.

\subsection{Comparing profiles of computed and predicted homoclinic orbits}
To test the homoclinic asymptotics from
\cref{btdde:sec:generic_bt_homoclinic_asymptotics} we compare the first and third
order asymptotics to the Newton corrected solution. For this, we use the 
function \mintinline[breaklines,breakafter=_]{MATLAB}{C1branch_from_C2point}. This function returns a branch, which
by default returns two initial corrected approximations in order to start continuation of the
codimension one curve under consideration. By setting the argument
\mintinline{MATLAB}{'predictor'} to \mintinline{MATLAB}{true} the approximations are left uncorrected.
To make the comparison visually clear, we set the perturbation parameter 
$\epsilon=0.02$ (\mintinline{MATLAB}{step = 0.02} in the code below).
The code below produces \cref{sm:fig:TriNeuronBAMCompareProfilesI,sm:fig:TriNeuronBAMCompareProfilesII}.
The difference between the two approximations is clearly noticeable. While
the first order asymptotics is close to the Newton corrected solution, the third
order asymptotics is indistinguishable at this scale from the Newton corrected
solution.
\inputminted[firstline=57, lastline=81]{MATLAB}{\pathToDDEBifToolDemos/BAM_neural_network_model/BAMnn.m}

\begin{figure}[ht]
    \centering
    \includegraphics{\imagedir/TriNeuronBAMCompareProfilesI.pdf}
    \caption{Comparison between the first and third-order homoclinic asymptotics from
    \cref{btdde:sec:transcritical_bt_homoclinic_asymptotics} near the transcritical
        Bogdanov--Takens bifurcation in \cref{sm:eq:tri_neuron_BAM} with the
        perturbation parameter set to $\epsilon=0.02$.}
        \label{sm:fig:TriNeuronBAMCompareProfilesI}
\end{figure}

\begin{figure}[ht]
    \centering
    \includegraphics{\imagedir/TriNeuronBAMCompareProfilesII.pdf}
    \caption{Comparison between the first and third-order homoclinic asymptotics from
    \cref{btdde:sec:transcritical_bt_homoclinic_asymptotics} near the transcritical
        Bogdanov--Takens bifurcation in \cref{sm:eq:tri_neuron_BAM} with the
        perturbation parameter set to $\epsilon=0.02$.}
        \label{sm:fig:TriNeuronBAMCompareProfilesII}
\end{figure}

\subsection{Continuation of the codimension one curves emanating}
To continue the three codimension one curves emanating from the generic
Bogdanov--Takens point, we can simply use the function
\mintinline{MATLAB}{C1branch_from_C2point}, as shown in the code below. To monitor the
continuation process, the argument \mintinline{MATLAB}{plot} must be set to \mintinline{MATLAB}{1}.
The most important setting is the perturbation parameter (or multiple),
\mintinline{MATLAB}{step} in the code below. If left out, default step sizes are defined.
However, depending on the problem, no convergence may then be obtained.
\inputminted[firstline=83, lastline=103]{MATLAB}{\pathToDDEBifToolDemos/BAM_neural_network_model/BAMnn.m}

\subsection{Predictors of the codimension one curves emanating from the Bogdanov--Takens point}
Before we provide the bifurcation diagram in the next section, we first obtain the predictors
for the codimension one curves. For this, we again use the function
\mintinline{MATLAB}{C1branch_from_C2point}. We set the argument \mintinline{MATLAB}{predictor} to \mintinline{MATLAB}{1}
and provide a range of perturbation parameters.
\inputminted[firstline=127, lastline=146]{MATLAB}{\pathToDDEBifToolDemos/BAM_neural_network_model/BAMnn.m}

\subsection{Bifurcation diagram}
The code below produces (a figure similar to) \cref{sm:fig:triNeuronBAMNeuralNetworkModelCompareParametersSupplementI}.
\inputminted[firstline=148, lastline=175]{MATLAB}{\pathToDDEBifToolDemos/BAM_neural_network_model/BAMnn.m}
\begin{figure}[ht]
\includetikzscaled{triNeuronBAMNeuralNetworkModelCompareParametersSupplementI}
\caption{Bifurcation diagram near the analytically derived transcritical
    Bogdanov--Takens point in \cref{sm:eq:tri_neuron_BAM} comparing the
    computed codimension one curves emanating form the Bogdanov--Takens point
    using \DDEBIFTOOL with the third-order homoclinic parameter asymptotics
    obtained in \cref{btdde:sec:transcritical_bt_homoclinic_asymptotics}.}
\label{sm:fig:triNeuronBAMNeuralNetworkModelCompareParametersSupplementI}
\end{figure}

\subsection{Detect bifurcations on the second Hopf branch}
We can use the \DDEBIFTOOL function \mintinline{MATLAB}{LocateSpecialPoints} to
locate bifurcation points on the second Hopf branch.
\inputminted[firstline=177, lastline=178]{MATLAB}{\pathToDDEBifToolDemos/BAM_neural_network_model/BAMnn.m}

Inspecting the \MATLAB output gives.
\begin{minted}{shell-session}
HopfCodimension2: calculate stability if not yet present
HopfCodimension2: calculate L1 coefficients
HopfCodimension2: (provisional) 2 gen. Hopf 2 Takens-Bogdanov  detected.
br_insert: detected 1 of 4: genh. Normalform:
    L2: 5.7933e+03
    L1: 1.0106e-09

br_insert: detected 2 of 4: BT. Normalform:
    a2: -5.8499e-04
    b2: -0.0031

br_insert: detected 3 of 4: genh. Normalform:
    L2: -5.7933e+03
    L1: -2.1214e-09

br_insert: detected 4 of 4: BT. Normalform:
    a2: -0.0012
    b2: 0.0135
\end{minted}
Thus, there are two Bogdanov--Takens points detected. By inspection of the normal
form coefficients $a$ and $b$ (\mintinline{MATLAB}{a2} and
\mintinline{MATLAB}{b2} in the output above) with the coefficients of the
transcritical Bogdanov--Takens point, we see that one is (very likely) already
known. Indeed, while continuing the second Hopf curve from the transcritical
Bogdanov--Takens point, we encounter another Bogdanov--Takens point, at which
the Hopf curve turns around, and continues in the reverse direction, back to
the transcritical Bogdanov--Takens point. Similarly, we also deduce that there
is only one generalized Hopf point detected on the Hopf curve. Inspecting 
the parameters indeed confirm our claim.

Since the newly detected Bogdanov--Takens point is on the Hopf curve
on which the equilibrium changes position under variation of the
parameters, we extract the Bogdanov--Takens point and computed the coefficients
of the generic case.
\inputminted[firstline=179, lastline=181]{MATLAB}{\pathToDDEBifToolDemos/BAM_neural_network_model/BAMnn.m}

\subsection{Plot Bogdanov-Takens test function along the Hopf curve}
Before we continue the homoclinic branch emanating from the generic
Bogdanov--Takens point, we first plot the test function for the Bogdanov--Takens
point, i.e. we plot the imaginary part for the critical eigenvalues along the
second Hopf curve. The code below produces (a figure similar to)
\cref{sm:fig:triNeuronBAMNeuralNetworkModelTestfunction}.
\inputminted[firstline=185, lastline=197]{MATLAB}{\pathToDDEBifToolDemos/BAM_neural_network_model/BAMnn.m}
\begin{figure}[ht]
    \centering
    \includetikzscaled{triNeuronBAMNeuralNetworkModelTestfunction}
    \caption{Plot of the Bogdanov--Takens test function along the second Hopf curve. 
    We see that surface $\omega=0$ is intersected transversally two times
    while continuing the Hopf curve.}
    \label{sm:fig:triNeuronBAMNeuralNetworkModelTestfunction}
\end{figure}

\subsection{Continue third homoclinic curve}
The code below continuous the homoclinic solution emanating from the generic Bogdanov--Takens detected
on the second Hopf branch. The last line extracts the parameters used for plotting.
\inputminted[firstline=199, lastline=205]{MATLAB}{\pathToDDEBifToolDemos/BAM_neural_network_model/BAMnn.m}

\subsection{Bifurcation plot}
We are now in the position to recreate the bifurcation plot as shown in the main text. There, we
left out the predictors for the codimension one equilibria bifurcation curves and changed the color
of the computed codimension one equilibria curves to gray. In this way, focus will be on
the homoclinic curves. The code below produces (a figure similar to)
\cref{sm:fig:triNeuronBAMNeuralNetworkModelCompareParameters}.
\inputminted[firstline=213, lastline=243]{MATLAB}{\pathToDDEBifToolDemos/BAM_neural_network_model/BAMnn.m}
\begin{figure}[ht]
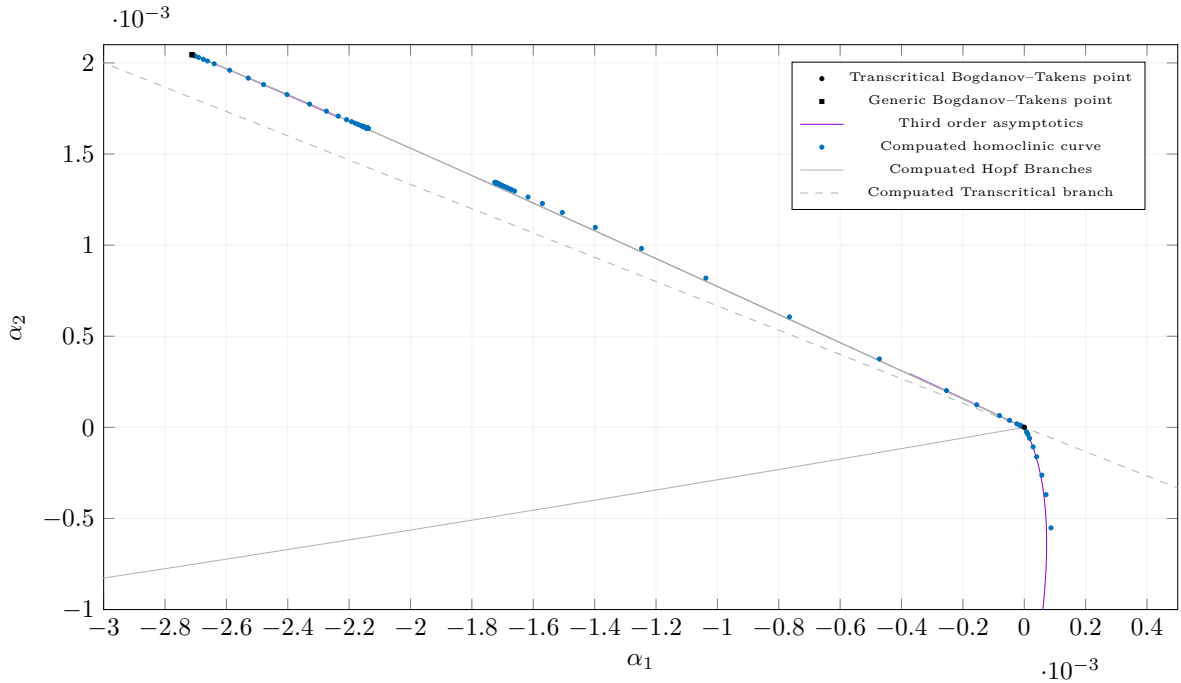

    \includetikzscaled{triNeuronBAMNeuralNetworkModelCompareParameters}
    \caption{
    Bifurcation diagram near the transcritical and generic Bogdanov--Takens
    bifurcation points in \cref{sm:eq:tri_neuron_BAM-u} comparing computed
    codimension one curves using \DDEBIFTOOL with the asymptotics obtained in the 
    main text.}
    \label{sm:fig:triNeuronBAMNeuralNetworkModelCompareParameters}
\end{figure}

\subsection{Large view bifurcation plot without predictors}
Although the first and third homoclinic bifurcation curves exist only in a very small
parameter region, the second is continued in a relatively large parameter range. The
code below produces (a figure similar to)
\cref{sm:fig:triNeuronBAMNeuralNetworkModelLargerBifurctionPlot}.
\inputminted[firstline=245, lastline=267]{MATLAB}{\pathToDDEBifToolDemos/BAM_neural_network_model/BAMnn.m}
\begin{figure}[ht]
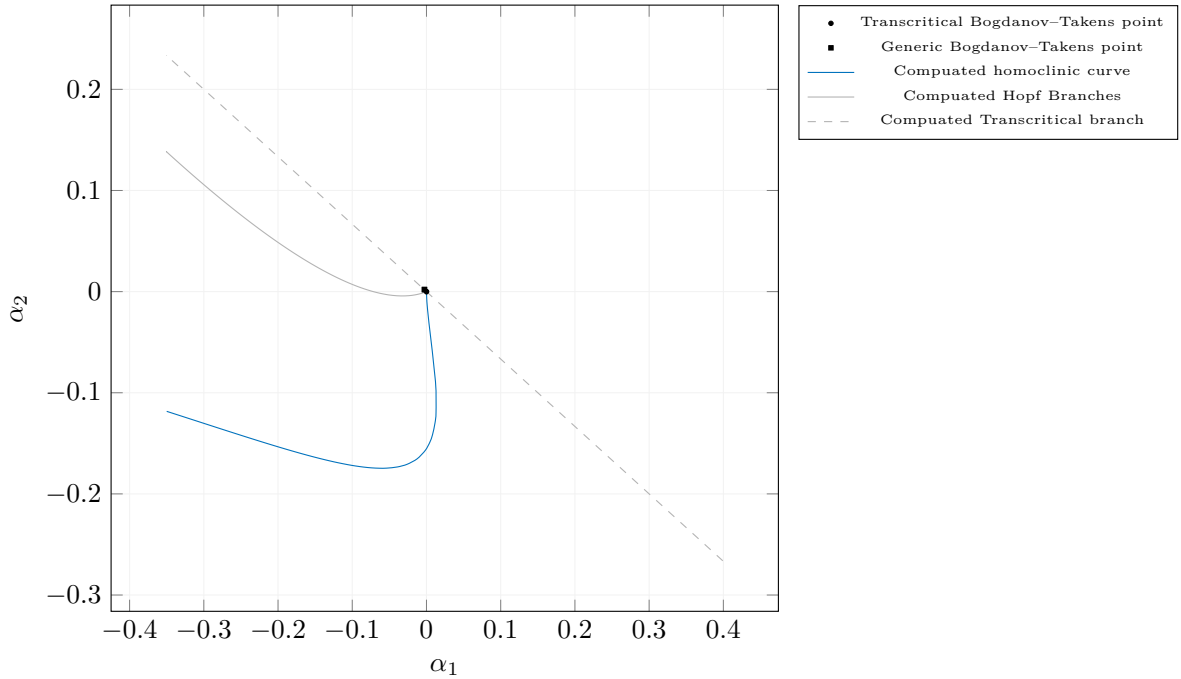

    \includetikzscaled{triNeuronBAMNeuralNetworkModelLargerBifurctionPlot}
    \caption{
    Bifurcation diagram near the transcritical and
    a generic Bogdanov--Takens bifurcation points in \cref{sm:eq:tri_neuron_BAM-u} showing the fully
    continued homoclinic branch connected to the non-trivial equilibrium emanating
    from the transcritical Bogdanov--Takens point.}
    \label{sm:fig:triNeuronBAMNeuralNetworkModelLargerBifurctionPlot}
\end{figure}

\subsection{Homoclinic orbits in phase-spase}
To obtain an expression of the continued homoclinic orbits,
we plot the solutions on the various homoclinic branches in phase-space.
The code below produces (a figure similar to)
\cref{sm:fig:triNeuronBAMNeuralNetworkModelOrbitsPhaseSpace}.
In the top left plot, the homoclinic orbits connected to the origin are shown. However,
we see that this does not provide much insight. 
One way to visualize homoclinic solutions is by inspection the
profiles of the solutions. This will be done in the next section.
Another way to reveal the homoclinic solutions connected to the origin is by
rotating the coordinates $(u_1,u_2)$. The result is seen in the top right
plot in \cref{sm:fig:triNeuronBAMNeuralNetworkModelOrbitsPhaseSpace}.
In the bottom left plot the solutions on the second homoclinic branch emanating
from the transcritical Bogdanov--Takens point are shown. Lastly, we plotted the
rotated homoclinic orbits emanating from the generic
Bogdanov--Takens point in the bottom right plot of
\cref{sm:fig:triNeuronBAMNeuralNetworkModelCompareOrbitsPhaseSpace}.
\inputminted[firstline=269, lastline=335]{MATLAB}{\pathToDDEBifToolDemos/BAM_neural_network_model/BAMnn.m}
\begin{figure}[ht!]
    \includegraphics[width=\textwidth]{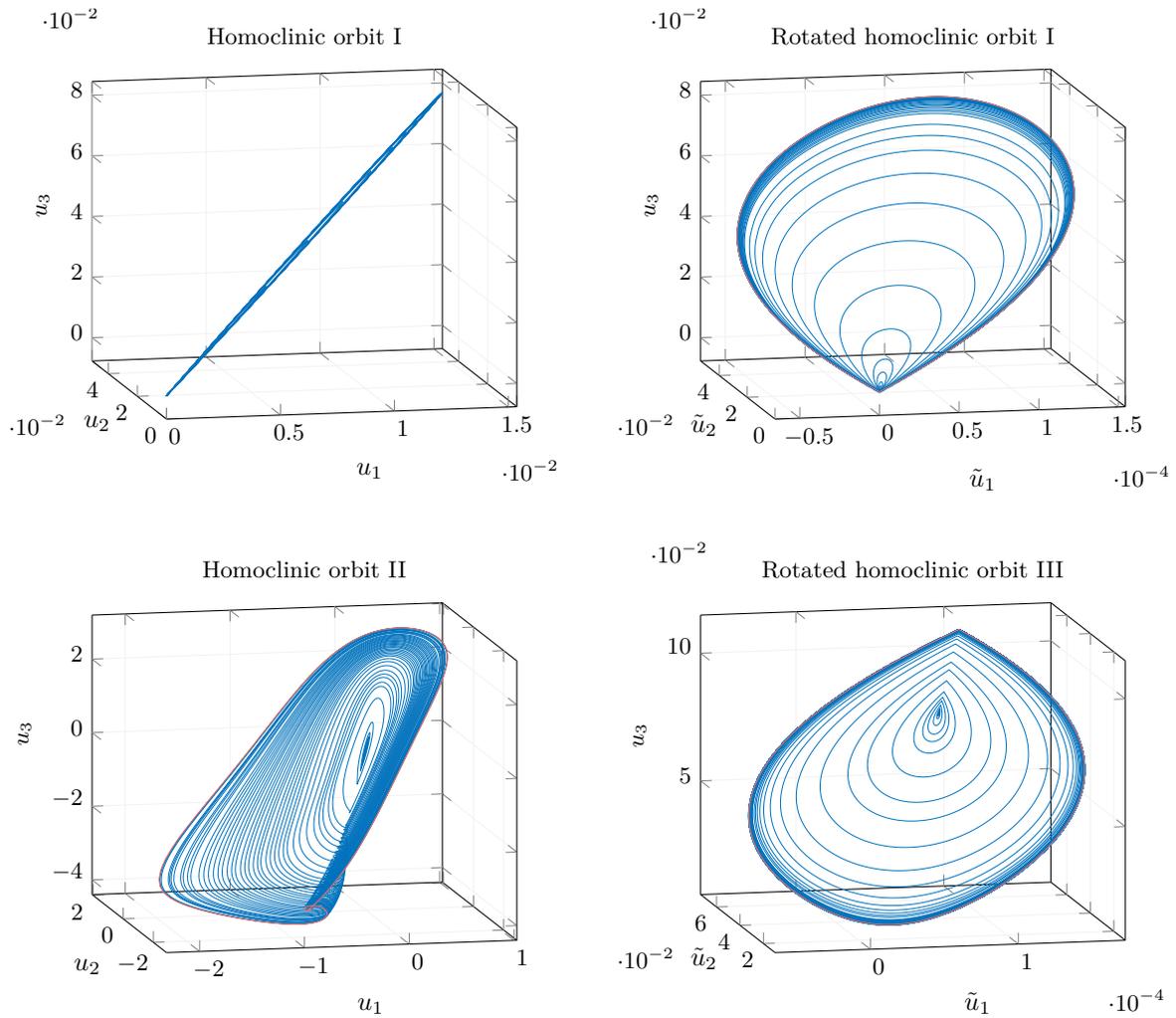}
    \caption{
    Plots of homoclinic solutions emanating from the generic and transcritical
    Bogdanov--Takens bifurcations in \cref{sm:eq:tri_neuron_BAM-u}. In the top
    left plot the homoclinic solutions emanating from the transcritical
    Bogdanov--Takens point with fixed saddle points are shown. In the top right
    plot, we rotated the $(u_1, u_2)$ coordinates in order to make these
    homoclinic solutions visible. The bottom left plot shows the second branch
    of homoclinic solutions emanating from the transcritical Bogdanov--Takens
    point. Lastly, in the bottom right plot, the homoclinic solutions emanating
    from the generic Bogdanov--Takens point are shown. Here also the $(u_1,
    u_2)$ coordinates are rotated.
    }
    \label{sm:fig:triNeuronBAMNeuralNetworkModelOrbitsPhaseSpace}
\end{figure}

\subsection{Compare homoclinic solutions in phase-space}
Here, we compare the corrected and uncorrected profiles of the homoclinic
solutions on the two homoclinic curves emanating from the transcritical
Bogdanov--Takens point with the perturbation parameter ranging from $0.003$ to
$0.009$.
The code below produces (a figure similar to)
\cref{sm:fig:triNeuronBAMNeuralNetworkModelCompareOrbitsPhaseSpace}. We see
that the corrected and predicted homoclinic orbits are nearly identical.
\inputminted[firstline=356, lastline=385]{MATLAB}{\pathToDDEBifToolDemos/BAM_neural_network_model/BAMnn.m}
\begin{figure}[ht]
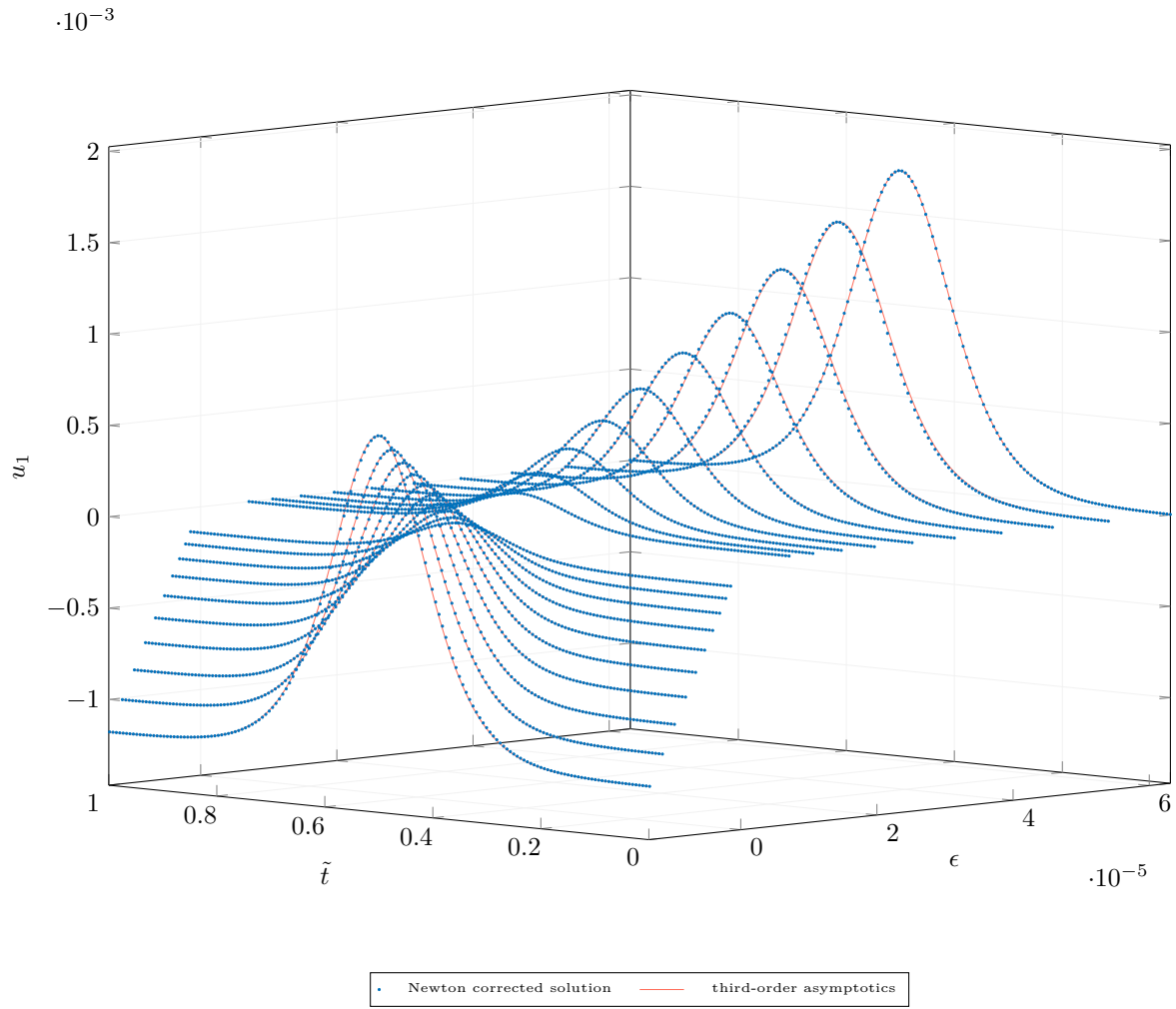

    \includetikzscaled{triNeuronBAMNeuralNetworkModelCompareOrbitsPhaseSpace}
    \caption{Plot comparing the third-order homoclinic asymptotics with the
        Newton correct homoclinic solutions in $(\epsilon,\tilde t, u_1)$
        phase-space. Here $\tilde t$ is the time $t$ rescaled to the interval
        $[0,1]$.
    }
    \label{sm:fig:triNeuronBAMNeuralNetworkModelCompareOrbitsPhaseSpace}
\end{figure}

\subsection{Continue periodic solutions from Hopf branch to homoclinic branch}
We continue a branch of periodic solutions emanating from point number 29 on the
first Hopf branch emanating from the transcritical Bogdanov--Takens point. The
periodic solution converges to a homoclinic orbit located on the second
homoclinic branch emanating from transcritical Bogdanov--Takens point. We will
use point number 28 on the periodic solution branch to compare against the
simulation in Julia in \cref{sm:sec:triNeuralBAMNetworkModelSimulation}. The
code below produces (a figure similar to)
\cref{sm:fig:triNeuronBAMNeuralNetworkModelPeriodicSolutions}.
\inputminted[firstline=387, lastline=391]{MATLAB}{\pathToDDEBifToolDemos/BAM_neural_network_model/BAMnn.m}
\vspace*{-12pt}
\inputminted[firstline=398, lastline=414]{MATLAB}{\pathToDDEBifToolDemos/BAM_neural_network_model/BAMnn.m}
\begin{figure}[ht]
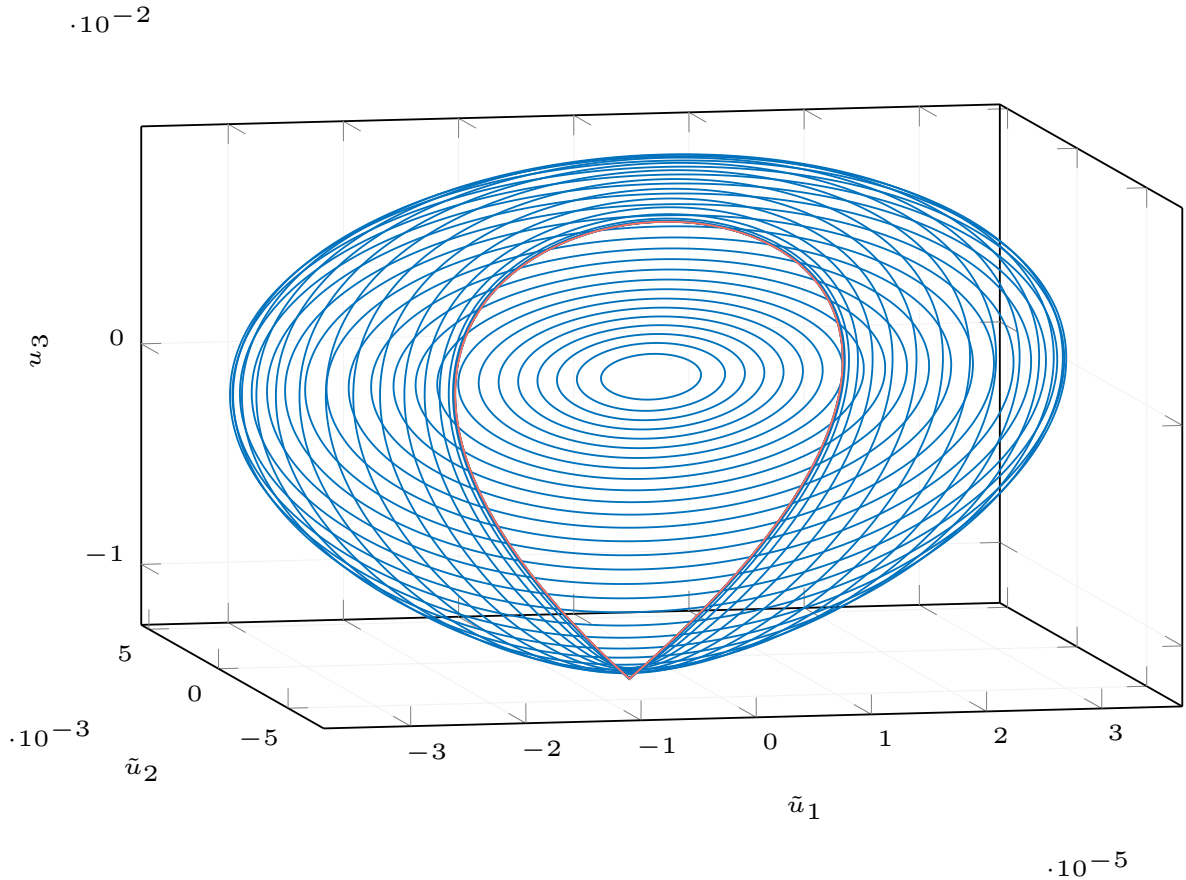

    \includetikzscaled{triNeuronBAMNeuralNetworkModelPeriodicSolutions}
    \caption{
        Branch of periodic solutions emanating from the last point on the first
        Hopf branch emanating from the transcritical Bogdanov--Takens point in
        \cref{sm:eq:tri_neuron_BAM-u}. The last point on the periodic solution
        branch is colored red, at which the periodic orbits have converged to
        a homoclinic orbit.
    }
    \label{sm:fig:triNeuronBAMNeuralNetworkModelPeriodicSolutions}
\end{figure}

\subsection{{\ifthesis \phantom{ } \fi} Homoclinic branch connecting the two Bogdanov--Takens {\ifthesis \phantom{ } \fi} points}
Here we will show that the transcritical and generic
Bogdanov--Takens points are connected, not only by a Hopf curve, but also
through a homoclinic curve. For this, we first continue the 
second Hopf branch emanating from the transcritical Bogdanov--Takens point again,
but with a smaller step size.
\inputminted[firstline=416, lastline=429]{MATLAB}{\pathToDDEBifToolDemos/BAM_neural_network_model/BAMnn.m}
Next, we continue from (almost) each point on the new Hopf branch the emerging periodic solutions in the parameter $\alpha_2$.
\inputminted[firstline=436, lastline=446]{MATLAB}{\pathToDDEBifToolDemos/BAM_neural_network_model/BAMnn.m}
By plotting the last point on each of the periodic solutions branches in
$(\alpha_1, \tilde u_1, \tilde u_2)$-space,
together with the homoclinic solutions on the first and third homoclinic branches continued
above, it is indeed clear that the two homoclinic branches are connected through a
single homoclinic curve, see
\cref{sm:fig:triNeuronBAMNeuralNetworkModelConnectionHomoclinicSolutions}.
We also see how the transition is made from the homoclinic orbits with a fixed saddle point
at the origin to the homoclinic orbits with a moving saddle. Clearly, \DDEBIFTOOL has
difficulties continuing the homoclinic orbits near the global homoclinic bifurcation point.
\inputminted[firstline=461, lastline=490]{MATLAB}{\pathToDDEBifToolDemos/BAM_neural_network_model/BAMnn.m}
\begin{figure}[ht]
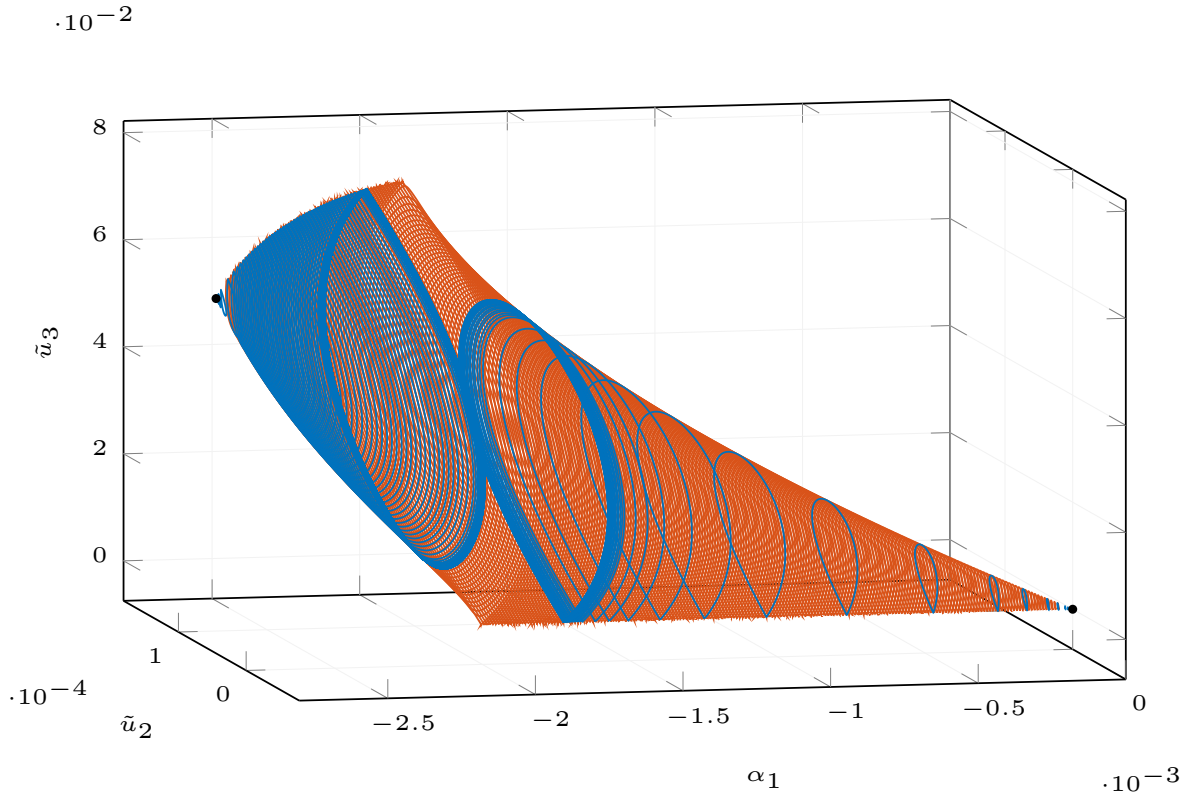

    \includetikzscaled{triNeuronBAMNeuralNetworkModelConnectionHomoclinicSolutions}
    \caption{
        Branch of homoclinic solutions (orange) connecting the transcritical Bogdanov--Takens point (the right black dot) with
        the generic Bogdanov--Takens point (the left black dot) in
        \cref{sm:eq:tri_neuron_BAM-u}. The blue homoclinic curve emerging from 
        the transcritical and generic Bogdanov--Takens points are the previously continued
        homoclinic branches.
     }
    \label{sm:fig:triNeuronBAMNeuralNetworkModelConnectionHomoclinicSolutions}
\end{figure}
To obtain an impression of the parameter curves connecting the transcritical and
generic Bogdanov--Takens points, we rotated the  curves through an angle of $\theta_2=0.646045233034992$.
By doing so, the two Bogdanov--Takens points are aligned on the abscissa. In
\cref{sm:fig:triNeuronBAMNeuralNetworkModelConnectionHomoclinicParameters}, we plotted the first and third homoclinic branch, emanating from the
transcritical Bogdanov--Takens and generic Bogdanov--Takens points
respectively, the Hopf curve connecting the two Bogdanov--Takens points, and
the newly obtained homoclinic bifurcation curve.
\inputminted[firstline=501, lastline=529]{MATLAB}{\pathToDDEBifToolDemos/BAM_neural_network_model/BAMnn.m}
\begin{figure}[ht]
    \centering
    \includetikzscaled{triNeuronBAMNeuralNetworkModelConnectionHomoclinicParameters}
    \caption{
    Bifurcation diagram near the transcritical and generic Bogdanov--Takens
    bifurcation points in \cref{sm:eq:tri_neuron_BAM-u} showing the fully continued
    homoclinic branch connected to the non-trivial equilibrium emanating from the
    transcritical Bogdanov--Takens point.}
    \label{sm:fig:triNeuronBAMNeuralNetworkModelConnectionHomoclinicParameters}
\end{figure}

\subsection{Convergence plot}
\label{sm:sec:tri_neuron_BAM:convergence_plot}
Using the function from \cref{sm:lst:convergence_plot}, we create a log-log
convergence plot comparing the convergence order of the first and third order
homoclinic asymptotics from \cref{btdde:sec:transcritical_bt_homoclinic_asymptotics}.
The code below yields \cref{sm:fig:triNeuralBAMNetworkModelConvergencePlot}.
\inputminted[firstline=531, lastline=542]{MATLAB}{\pathToDDEBifToolDemos/BAM_neural_network_model/BAMnn.m}
\begin{figure}[ht]
    \centering
    \ifcompileimages%
        \tikzsetnextfilename{triNeuralBAMNetworkModelConvergencePlot}%
        \input{tikz/triNeuralBAMNetworkModelConvergencePlot}%
    \else
        \includegraphics{\imagedir/triNeuralBAMNetworkModelConvergencePlot}
    \fi

     \caption{On the abscissa is the approximation to the amplitude $A_0$ and on
        the ordinate the relative error $\delta$ between the constructed solution
        \mintinline{MATLAB}{hcli_pred} to the defining system for the homoclinic orbit
        and the Newton corrected solution \mintinline{MATLAB}{hcli_corrected}.}
    \label{sm:fig:triNeuralBAMNetworkModelConvergencePlot}
\end{figure}

\subsection{Simulation with {\tt DifferentialEquations.jl}}
\label{sm:sec:triNeuralBAMNetworkModelSimulation}
Here we will perform four simulations. The first two simulations will be at two
homoclinic orbits located on the two homoclinic curves emanating from the
transcritical Bogdanov--Takens point continued with \DDEBIFTOOL, see
\cref{sm:fig:triNeuralBAMNetworkSimulationHomoclinic}. The second two simulations will be in
the regions where there should be stable periodic orbits, see
\cref{sm:fig:triNeuralBAMNetworkSimulationPeriodic}.

\begin{figure}[ht]
    \includegraphics{\imagedir/triNeuralBAMNetworkModelHomoclinicSimulation.pdf}
    \caption{Comparing the computed homoclinic orbits in \cref{sm:eq:tri_neuron_BAM-u}
    with \DDEBIFTOOL with the solutions obtained from numerical simulation with Julia.
    We see the numerical integrated solution
    going through all the red points from the solution from \DDEBIFTOOL.}
    \label{sm:fig:triNeuralBAMNetworkSimulationHomoclinic}
\end{figure}

\begin{figure}[ht]
    \includegraphics{\imagedir/triNeuralBAMNetworkModelPeriodicSimulation.pdf}
    \caption{Comparing the computed periodic orbits in \cref{sm:eq:tri_neuron_BAM-u}
    with \DDEBIFTOOL with the solutions obtained from numerical simulation with Julia.
    We see the numerical integrated solution
    going through all the red points from the solution from \DDEBIFTOOL.}
    \label{sm:fig:triNeuralBAMNetworkSimulationPeriodic}
\end{figure}

\subsubsection{Loading necessary Julia packages}
We start by loading the necessary Julia packages. Compared with the previous
demonstrations, we also need to load the Julia package {\tt Symbolics.jl} \cite{Symbolics.jl} to
differentiate the activation functions $f_1,f_2,f_3$.
\inputminted[firstline=1, lastline=9]{julia}{\pathToJuliaFiles/triNeuralBAMNetworkModel_simulation_article.jl}

\subsubsection{Define system}
Next we define the system to be integrated, a system to approximate the reverse
flow, and also an allocating version used for stability calculations. Note that
we need the Julia {\tt Symbolics.jl} to differentiate the activation functions.
\inputminted[firstline=11, lastline=63]{julia}{\pathToJuliaFiles/triNeuralBAMNetworkModel_simulation_article.jl}

\subsubsection{Function for creating streamlines plot}
To obtain an impression of the flow near transcritical Bogdanov--Takens point,
we create a streamlines function. This is particularly useful for seeing the
flow around the stable manifold of the saddle-note.
\inputminted[firstline=65, lastline=77]{julia}{\pathToJuliaFiles/triNeuralBAMNetworkModel_simulation_article.jl}

\subsubsection{Create figure with several axes}
We create a figure containing multiple axis in which we will plot 
the homoclinic, periodic orbits, and the left and right-hand sides
of \cref{sm:eq:triNeuralBAMNetworkModel:tan}.
\inputminted[firstline=79, lastline=89]{julia}{\pathToJuliaFiles/triNeuralBAMNetworkModel_simulation_article.jl}

\subsubsection{Define parameters, equilibria}
We define parameters located on the continued homoclinic branch with
\DDEBIFTOOL. Then calculate the equilibria points in \cref{sm:eq:tri_neuron_BAM}
near the transcritical Bogdanov--Takens point.
\inputminted[firstline=91, lastline=98]{julia}{\pathToJuliaFiles/triNeuralBAMNetworkModel_simulation_article.jl}

\subsubsection{Plot equilibria and homoclinic orbit}
By plotting the homoclinic orbit obtained with \DDEBIFTOOL located at parameter
values 
\[
    (\alpha_1, \alpha_2) = (-0.001724521613831, 0.001344362436730),
\]
we can compare with the numerical simulations. As in the analysis with
\DDEBIFTOOL, we rotate the coordinates of the solutions to visualize the
solutions in phase-space.
\inputminted[firstline=100, lastline=110]{julia}{\pathToJuliaFiles/triNeuralBAMNetworkModel_simulation_article.jl}

\subsubsection{Leading eigenvectors}
Next, we calculate and plot the leading eigenvectors of the characteristic matrix at the saddle-node bifurcation point.
\inputminted[firstline=112, lastline=135]{julia}{\pathToJuliaFiles/triNeuralBAMNetworkModel_simulation_article.jl}

\subsubsection{Define callback}
Since we are only interested in the flow near the equilibria points, we create a
continuous callback to ensure the orbits do not become too large.
\inputminted[firstline=137, lastline=140]{julia}{\pathToJuliaFiles/triNeuralBAMNetworkModel_simulation_article.jl}

\subsubsection{Integrate the system at homoclinic orbits I}
Now we define the problem to be integrated and choose the algorithm to be used.
Then we integrate the system for a range of initial history functions using the
function \mintinline{julia}{streamlines}. Next, we integrate the system near
the inner equilibrium, i.e., the equilibrium inside the homoclinic orbit. This
equilibria should be an unstable spiral. We show the first and last part
of the obtained solutions. We see that the last part of the integrated solutions
completely overlap the homoclinic solution obtained with \DDEBIFTOOL.
\inputminted[firstline=142, lastline=165]{julia}{\pathToJuliaFiles/triNeuralBAMNetworkModel_simulation_article.jl}

\subsubsection{Add arrows on solutions}
Lastly, we add arrows to the obtained solutions and redraw the equilibria.
\inputminted[firstline=167, lastline=178]{julia}{\pathToJuliaFiles/triNeuralBAMNetworkModel_simulation_article.jl}
We should now obtain an interactive figure similar to the left figure in
\cref{sm:fig:triNeuralBAMNetworkSimulationHomoclinic}.

\subsubsection{Simulation near stable periodic orbit I}
The code for numerical simulation near the second homoclinic orbit, see the
right plot in \cref{sm:fig:triNeuralBAMNetworkSimulationHomoclinic}, is not
included here.

To show by integration the existence of a stable periodic orbit, we first
located a periodic orbit in \DDEBIFTOOL. This can be done by continuing a
branch of periodic orbits emanating from a point on the continued Hopf curves.
Then we load the profiles of the periodic orbits into Julia. We perform two
simulations. For the first we integrate with a constant history function equal
to a point inside the periodic orbit. The second simulation starts from the
unstable eigenvector of the characteristic matrix calculated above.
\inputminted[firstline=237, lastline=328]{julia}{\pathToJuliaFiles/triNeuralBAMNetworkModel_simulation_article.jl}
After running the above code, we should obtain a similar plot as in
\cref{sm:fig:triNeuralBAMNetworkSimulationPeriodic}.

\subsubsection{Stability of the center manifold}
To confirm \cref{sm:lemma:triNeuralBAMNetworkModelEigenvalues} numerically, we
consider again
\cref{sm:eq:triNeuralBAMNetworkModelFunctions,sm:eq:triNeuralBAMNetworkModelFixedParameters}.
The code below plots the left and right-hand sides of
\cref{sm:eq:triNeuralBAMNetworkModel:tan}. We see in
\cref{sm:fig:triNeuralBAMNetworkStabilityDeterminingFunction} that there are
two points of intersection.
\inputminted[firstline=421, lastline=439]{julia}{\pathToJuliaFiles/triNeuralBAMNetworkModel_simulation_article.jl}
\begin{figure}[ht]
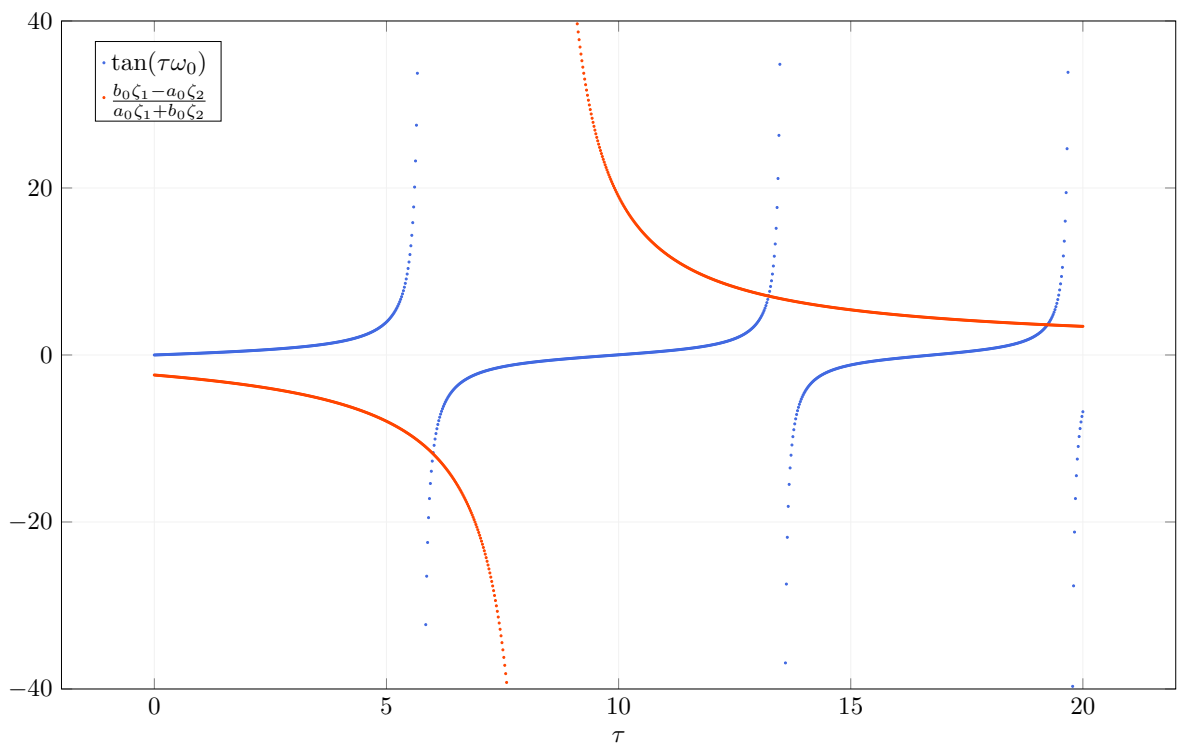

    \centering
    \includetikzscaled{triNeuralBAMNetworkModelStabilityDeterminingFunction}
    \caption{Plot of the left and right-hand sides of
    \cref{sm:eq:triNeuralBAMNetworkModel:tan}. Points of intersection are
    candidates for the center manifold to lose its stability.}
    \label{sm:fig:triNeuralBAMNetworkStabilityDeterminingFunction}
\end{figure}
Using the function \mintinline{julia}{roots} from the package {\tt IntervalRootFinding.jl}
we search for points of intersection. 
\inputminted[firstline=441, lastline=444]{julia}{\pathToJuliaFiles/triNeuralBAMNetworkModel_simulation_article.jl}
In the Julia output we obtain
\begin{minted}{shell-session}
7-element Vector{Root{Interval{Float64}}}:
 Root([13.2309, 13.231], :unique)
 Root([19.2374, 19.2375], :unique)
 Root([19.7337, 19.7338], :unknown)
 Root([6.00301, 6.00302], :unique)
 Root([8.33333, 8.33334], :unknown)
 Root([13.5353, 13.5354], :unknown)
 Root([5.75402, 5.75403], :unknown)
 Root([8.33333, 8.33334], :unknown)
\end{minted}
Note that, since there are multiple discontinuities, there are many unknown
solutions returned. We extract the unique solutions from the list of solutions
and test if these provide solutions to the characteristic equation.
\inputminted[firstline=446, lastline=457]{julia}{\pathToJuliaFiles/triNeuralBAMNetworkModel_simulation_article.jl}
In the Julia output we obtain
\begin{minted}{shell-session}
The centermanifold is locally attractive for 0 < τ < 13.230934887939895.
\end{minted}

\begin{figure}[ht]
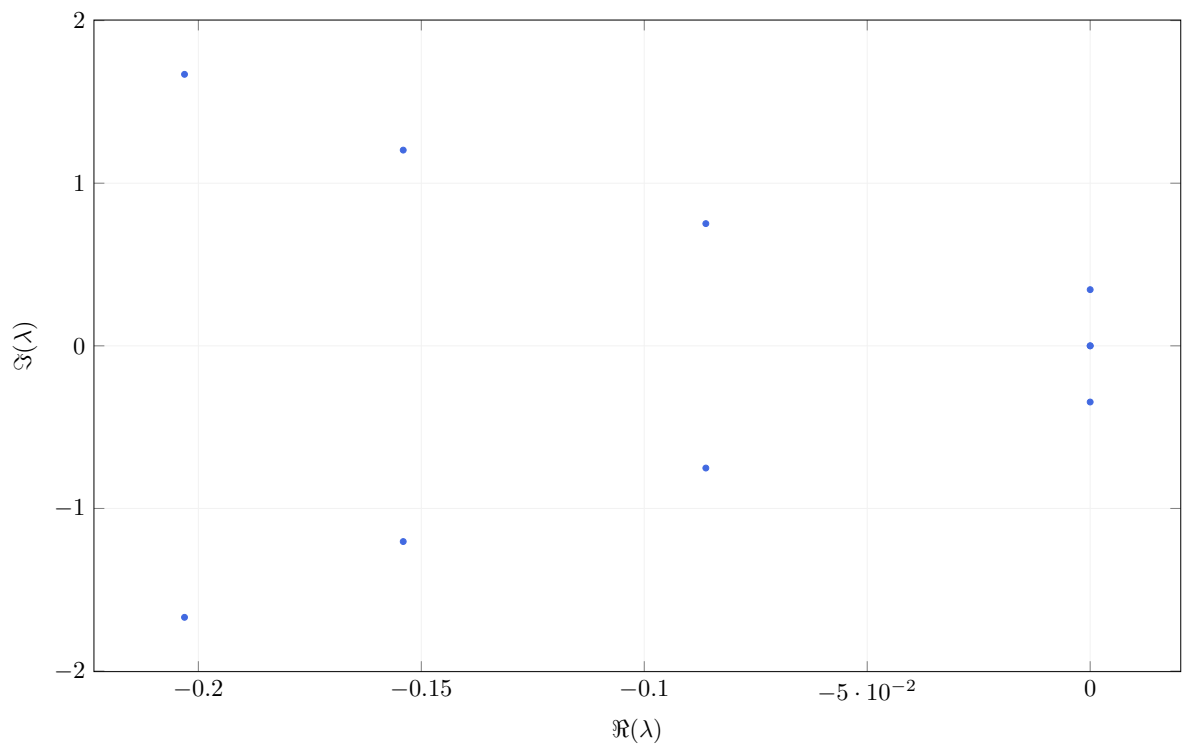

    \centering
    \includetikzscaled{triNeuralBAMNetworkModelEigenvalues}
    \caption{Plot of the leading eigenvalues at the analytically derived
    transcritical Bogdanov--Takens point with $\tau = 13.230934887939895$
    and $\omega = \omega(\tau)$ from \cref{sm:eq:BAM_omega}. At this
    point there are four eigenvalues on the imaginary axis. For
    $\tau > 13.230934887939895$ the center manifold is locally unstable.}
    \label{sm:fig:triNeuralBAMNetworkModelEigenvalues}
\end{figure}
\subsubsection{Calculate and plot stability at Bogdanov--Takens point}
We finish this demonstration by confirming the stability
of the transcritical Bogdanov--Takens point at $\tau = 13.230934887939895$
obtained in the previous section. In \cref{sm:fig:triNeuralBAMNetworkModelEigenvalues}
we have plotted the eigenvalues. We indeed see that at $\tau = 13.230934887939895$
the center manifold loses stability. Lastly, we also verified in the code
below that the eigenvalues with positive imaginary part on the imaginary axis
is approximately equal to the expression for $\omega$ obtained in \cref{sm:eq:BAM_omega}. 
\inputminted[firstline=459, lastline=467]{julia}{\pathToJuliaFiles/triNeuralBAMNetworkModel_simulation_article.jl}

\clearpage
\setcounter{page}{1}
\renewcommand{\thepage}{\roman{page}}
\pdfbookmark[0]{References}{references}
\renewcommand\refname{References for main text and supplement}

\fi

\bibliographystyle{siamplain}
\bibliography{references}

\end{document}